\tikzset{
	pics/carc/.style args={#1:#2:#3}{
		code={
			\draw[pic actions] (#1:#3) arc(#1:#2:#3);
		}
	}
}
\newtheorem{theorem}{Theorem}
\newtheorem{proposition}[theorem]{Proposition}
\newtheorem{corollary}[theorem]{Corollary}
\theoremstyle{definition}
\newtheorem{example}[theorem]{Example}
\theoremstyle{lemma}
\newtheorem{lemma}[theorem]{Lemma}
\theoremstyle{remark}
\newtheorem{remark}[theorem]{Remark}
\newtheorem{assumption}[theorem]{Assumption}
\numberwithin{theorem}{section}
\numberwithin{equation}{section}
\numberwithin{table}{section}
\numberwithin{figure}{section}
\newcommand{\M}{\ensuremath{\mathcal{M}}}
\newcommand{\Q}{\ensuremath{\mathcal{Q}}}
\newcommand{\V}{\ensuremath{\mathcal{V}}}
\newcommand{\Vkl}{\ensuremath{\mathbb{V}}}
\newcommand{\Mh}{\ensuremath{M_H}}
\newcommand{\Qh}{\ensuremath{Q_H}}
\newcommand{\tQh}{\ensuremath{\widetilde Q_H}}
\newcommand{\Vh}{\ensuremath{V_H}}
\newcommand{\Vhkl}{\ensuremath{{\mathbb V}_h}}
\newcommand{\dimu}{{\ensuremath{n_u}}}
\newcommand{\dimp}{{\ensuremath{n_p}}}
\newcommand{\diml}{{\ensuremath{n_\lambda}}}
\newcommand{\calTO}{\ensuremath{\mathcal{T}_\Omega}}
\newcommand{\HO}{\ensuremath{H_\Omega}}
\newcommand{\calTG}{\ensuremath{\mathcal{T}_\Gamma}}
\newcommand{\uh}{\ensuremath{u_H}}
\newcommand{\duh}{\ensuremath{\dot u_H}}
\newcommand{\ph}{\ensuremath{p_H}}
\newcommand{\dph}{\ensuremath{\dot p_H}}
\newcommand{\lh}{\ensuremath{\lambda_H}}
\newcommand{\fraka}{\ensuremath{\mathfrak{a}}}
\newcommand{\frakaTilde}{\ensuremath{\widetilde{\mathfrak{a}}}}
\newcommand{\frakb}{\ensuremath{\mathfrak{b}}}
\newcommand{\frakk}{\ensuremath{\mathfrak{K}}}
\newcommand{\calRu}{\ensuremath{u^\calR_H}} % \calR^u_H
\newcommand{\calRp}{\ensuremath{\tilde p^\calR_H}} % \calR^p_H
\newcommand{\calRl}{\ensuremath{\lambda^\calR_H}}
\newcommand{\calRdu}{\ensuremath{\dot u^\calR_H}} 
\newcommand{\calRdp}{\ensuremath{\dot p^\calR_H}} 
\def\R{\mathbb{R}}
\definecolor{myBlue2}{RGB}{113,104,238} % medium slate blue	
\definecolor{myBlue3}{RGB}{30,144,255} % dodger blue
\definecolor{myGreen2}{RGB}{69,169,0} % chatreuse
\definecolor{myGreen3}{RGB}{154,205,50} % olive Drap
\definecolor{myRed2}{RGB}{165,42,42} % brown
\definecolor{color0}{rgb}{0.12156862745098,0.466666666666667,0.705882352941177}
\definecolor{color1}{rgb}{1,0.498039215686275,0.0549019607843137}
\definecolor{color3}{rgb}{0.83921568627451,0.152941176470588,0.156862745098039}
\definecolor{color2}{rgb}{0.172549019607843,0.627450980392157,0.172549019607843}
\DeclareMathOperator{\id}{id}
\DeclareMathOperator{\wcba}{wcba}
\newcommand{\calA}{\ensuremath{\mathcal{A}}}
\newcommand{\calB}{\ensuremath{\mathcal{B}}}
\newcommand{\calE}{\ensuremath{\mathcal{E}}}
\newcommand{\calG}{\ensuremath{\mathcal{G}}}
\newcommand{\calK}{\ensuremath{\mathcal{K}}}
\newcommand{\calM}{\ensuremath{\mathcal{M}}}
\newcommand{\calN}{\ensuremath{\mathcal{N}}}
\newcommand{\calO}{\mathcal{O}}
\newcommand{\calP}{\ensuremath{\mathcal{P}}}
\newcommand{\calQ}{\ensuremath{\mathcal{Q}}}
\newcommand{\calR}{\ensuremath{\mathcal{R}}}
\newcommand{\calT}{\ensuremath{\mathcal{T}}}
\newcommand{\calV}{\ensuremath{\mathcal{V}}}
\newcommand{\calW}{\ensuremath{\mathcal{W}}}
\def\dt{\,\text{d}t}
\def\dtau{\,\text{d}\tau}
\def\dx{\,\text{d}x}
\newcommand{\ddt}{\ensuremath{\frac{\text{d}}{\text{d}t}} }
\newcommand{\wconv}{\rightharpoonup}
\def\eps{\varepsilon}
\begin{document}
\title[A Multiscale Method for Heterogeneous Bulk-Surface Coupling]{A Multiscale Method for Heterogeneous\\ Bulk-Surface Coupling$^\star$}
\author[]{R.~Altmann$^\dagger$, B.~Verf\"urth$^{\ddagger}$}
\address{${}^{\dagger}$ Institut f\"ur Mathematik, Universit\"at Augsburg, Universit\"atsstr.~14, 86159 Augsburg, Germany.}\email{robert.altmann@math.uni-augsburg.de}
\address{${}^{\ddagger}$ Institut f\"ur Angewandte und Numerische Mathematik, Karlsruher Institut f\"ur Technologie, Englerstr.~2, 76131 Karlsruhe, Germany.}
\email{barbara.verfuerth@kit.edu}
\thanks{${}^\star$ RA's work is funded by the Deutsche Forschungsgemeinschaft (DFG, German Research Foundation) -- Project-ID 446856041. BV was affiliated with University of Augsburg when major parts of the work were carried out. BV's work at KIT is funded by the Deutsche Forschungsgemeinschaft (DFG, German Research Foundation) -- Project-ID 258734477 -- SFB 1173.}
%
%\date{\today}
\date{}
\keywords{}
%
%
%=============================================================================
%=========  Abstract
%=============================================================================
\begin{abstract}
In this paper, we construct and analyze a multiscale (finite element) method for parabolic problems with heterogeneous dynamic boundary conditions. As origin, we consider a reformulation of the system in order to decouple the discretization of bulk and surface dynamics. This allows us to combine multiscale methods on the boundary with standard Lagrangian schemes in the interior. 
We prove convergence and quantify explicit rates for low-regularity solutions, independent of the oscillatory behavior of the heterogeneities. As a result, coarse discretization parameters, which do not resolve the fine scales, can be considered. 
The theoretical findings are justified by a number of numerical experiments including dynamic boundary conditions with random diffusion coefficients. 
\end{abstract}
%
%=============================================================================
%=========  Title / Contents
%=============================================================================
\maketitle
%\setcounter{tocdepth}{2}
%\tableofcontents
%
{\tiny{\bf Key words.} multiscale method, dynamic boundary conditions, bulk-surface coupling, PDAE}\\
\indent
{\tiny{\bf AMS subject classifications.} {\bf 65M60}, {\bf 65M12}, {\bf 65L80}} 
%% 65M60 -- NumAna --> time-dep PDEs --> FEM
%% 65M12 -- NumAna --> time-dep PDEs --> Stability, Convergence
%% 65L80 -- NumAna --> ODEs --> DAEs
%
%
%=============================================================================
%=========  Introduction
%=============================================================================
\section{Introduction}
%% applications 
This paper is concerned with coupled bulk-surface partial differential equations (PDE) with a heterogeneous medium considered on the surface, modelled through dynamic boundary conditions. Problems with dynamic boundary conditions arise, e.g., as limit of a coupled bulk-bulk problem with a thin outer domain~\cite{Lie13}. If this outer domain is additionally heterogeneous, then this transfers to the limiting boundary conditions. 
Further applications include fluid-structure and acoustic-elastic interaction if one component can be modelled in form of a boundary layer~\cite{Hip17}. In this way, generalized boundary conditions can simplify and reduce models used for example in hemodynamics, modeling the blood flow in arteries~\cite{FigVJHT06}. Dynamic boundary conditions also enable a proper way to model a heat source or a heat transfer on the boundary~\cite{Esc93,Gol06}. Generally speaking, dynamic boundary conditions are of high significance if one needs to reflect the effective properties of the surface. 

%% existing FEM approaches
Although the inclusion of dynamic boundary conditions is well-understood from a theoretical point of view, see e.g.~\cite{FavGGR02,CocFGGR08,VazV08}, the corresponding numerical analysis drags behind. There are only a handful of papers dealing with the numerical approximation of such (or related) problems. 
For stationary elliptic problems an isoparametric finite element method was introduced in~\cite{EllR13}. 
Numerical approximation schemes for parabolic problems with dynamic boundary conditions are
presented in~\cite{VraS13a,KovL17}. In both cases, a standard Galerkin ansatz for the spatial discretization is considered, i.e., the mesh on the boundary is automatically
specified through the restriction of the mesh of the bulk to the boundary. 
Such approaches, however, suffer if the solution is oscillatory on the boundary or does not contain a sufficiently regular trace, e.g., due to heterogeneities on the boundary.

%% decoupled formulation 
In this paper, we propose an alternative approach based on a formulation as a coupled system, cf.~\cite[Ch.~5.3]{Las02}. This means that bulk and surface dynamics are considered as two systems, which are coupled through the boundary. Similar approaches were taken in~\cite{EngF05} for theoretical purposes in the semigroup setting or in the framework of dual continuum models~\cite{Lic00} used in the field of fractured porous media.
We consider the weak formulation of the problem and the interpretation as partial differential-algebraic equation (PDAE). This system class provides a powerful framework (especially in terms of modeling) for general coupled systems, see~\cite{KunM06,LamMT13,Alt15}. 
The PDAE formulation comes along with a saddle point structure and thus, needs a special treatment in order to prevent numerical instabilities. More precisely, we need to design inf-sup stable schemes, leading to a novel class of mixed finite element methods. Here we allow independent discretizations in the bulk and on the surface. In this sense, these methods reveal a flexibility known from non-conforming schemes although formulated in a conforming framework. 

%% multiscale 
The possibility to combine coarse grids in the interior with fine grids or adapted schemes on the boundary is of great value if bulk and surface dynamics have different characteristic length scales. Assuming heterogeneous diffusion coefficients on the boundary without scale separation, we propose to apply the Localized Orthogonal Decomposition (LOD) on the boundary. This method was originally introduced in~\cite{MalP14} for elliptic problems and further developed in the last years covering a large range of applications.
In view of this paper, we particularly mention the application to standard parabolic problems~\cite{MalP18}, thermoelasticity~\cite{MalP17}, and poroelasticity~\cite{AltCMPP20}.
Furthermore, \cite{HelMW19} recently discussed the application to fracture problems, where also a bulk problem is coupled to an interface problem. Therein, however, the multiscale features are relevant in the bulk as well and the problems in the volume and on the interface remain completely coupled. 
The LOD can also be interpreted in the context of subspace decomposition methods~\cite{KorY16,KorPY18}.
Furthermore, it is closely connected to the area of homogenization~\cite{GalP17,PetVV19}, which allows an alternative interpretation of our method, in particular for one-dimensional boundaries. 

%% main results of paper
The combination of a multiscale method on the boundary and standard Lagrangian schemes in the bulk allows for a computationally efficient and accurate representation of the coarse dynamics for heterogeneous bulk-surface problems.
We prove convergence of the corresponding semi-discrete scheme with explicit rates even for low-regularity solutions as they may appear for general heterogeneous and discontinuous surface diffusion coefficients.
Furthermore, the multiscale method applied on the boundary leads to accurate approximations and convergence rates even in the pre-asymptotic regime, i.e., when the oscillations and jumps of the coefficients are completely unresolved by the mesh.

%% content
The paper is structured as follows. 
In Section~\ref{sect:model} we derive the system equations with heterogeneous dynamic boundary conditions as the limit of a coupled bulk-bulk problem. Further, we discuss two possible weak formulations. 
The novel class of discretization schemes is introduced in Section~\ref{sect:disc}. As it is based on a coupled formulation, we consider a special class of mixed finite elements in combination with the LOD. 
A specific multiscale method is then presented and analyzed in Section~\ref{sect:errAnalysis}. Numerical evidence of the theoretical results, clearly showing the computational gains of the approach, are subject of Section~\ref{sect:numerics}.
%
%
%=============================================================================
%=========  Dyn BC as Limit Model
%=============================================================================
\section{Derivation of Dynamic Boundary Conditions}\label{sect:model}
In this section, we derive the system equations for the considered heterogeneous bulk-surface coupling. 
For this, we first motivate the dynamic boundary conditions as the limit of a bulk-bulk coupling and discuss corresponding weak formulations afterwards. 
In order to deal with the heterogeneity on the boundary later on, we consider a decoupled approach, which is beneficial for the numerical consideration. 
%
%
%=============================================================================
\subsection{Dynamic boundary conditions as a limit}\label{sect:model:limit}
We consider the linear heat equation with constant thermal diffusivity~$\kappa>0$ in a bounded domain~$\Omega\subseteq\R^d$, $d\ge2$, coupled with a second parabolic problem in the surrounding domain~$\Omega_\delta$ of thickness~$\delta>0$. 
The joint boundary is denoted by~$\Gamma\coloneqq\overline{\Omega} \cap \overline{\Omega}_\delta$. 
Moreover we assume the outer material to be heterogeneous in tangential direction and constant in normal direction, which is encoded in the diffusion coefficient~$a_\eps\in L^\infty(\Omega_\delta)$, cf.~Figure~\ref{fig:domains}. 
More precisely, we assume~$a_\eps$ to be of the form 
\[
  a_\eps (x)
  = a_\eps(P_\delta(x)),
\]
where $P_\delta(x)\in \Gamma$ denotes the normal projection of~$x\in \Omega_{\delta}$ onto the boundary~$\Gamma$. 
Here, $\eps\ll 1$ is a small parameter, which corresponds to the oscillatory behavior of the diffusion. In the special case of a periodic coefficient, $\eps$ equals the period length.
Furthermore, we assume $a_\eps$ to be uniformly bounded from below by a positive constant~$\alpha>0$.
\begin{figure}
\pgfmathsetseed{\number\pdfrandomseed}
\begin{tikzpicture}[scale=1.1]
	% Omega 	
	\foreach \phi in {0, 10, ..., 350} {
		% define random color
		\pgfmathparse{rnd}
		\pgfmathsetmacro{\rndRed}{\pgfmathresult}
		\pgfmathsetmacro{\rndGreen}{\pgfmathresult}
		\definecolor{rndCol}{rgb}{\rndRed,\rndGreen,0.6}
		% draw arcs
		\draw[rndCol, line width=10pt] pic{carc=\phi:\phi+10:1.5};		
%		\draw[rndCol, line width=6pt, xshift=5cm] pic{carc=\phi:\phi+10:1.5};		
		\draw[rndCol, line width=2pt, xshift=7cm] pic{carc=\phi:\phi+10:1.5};		
	}
	% eps
	\draw[<->] pic{carc=130:140:1.85};		
	\node at (-1.3, 1.3) {\small $\eps$};	
	% Omega, Gamma
	\node at (0, 0) {$\Omega$};	
	\node at (1.5, 1.1) {$\Omega_{\delta}$};	
	\node at (7, 0) {$\Omega$};	
	\node at (8.3, 1.1) {$\Gamma$};	
	% delta \to 0
	\draw[thick, ->] (2.1, 0) -- (5.0, 0);	
	\node at (3.5, 0.25) {$\delta \to 0$};
\end{tikzpicture}
\caption{Illustration of the domains $\Omega$, $\Omega_\delta$, the boundary~$\Gamma$, and the limiting process $\delta\to 0$.} 
\label{fig:domains}
\end{figure}
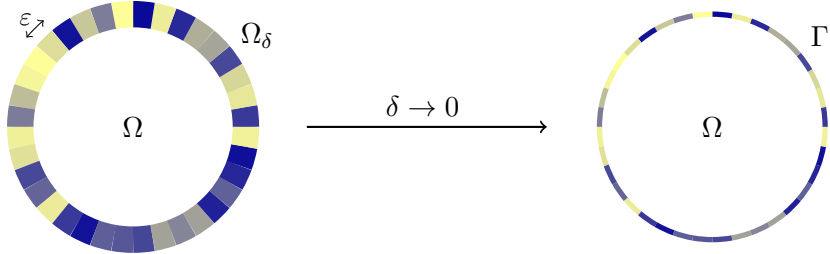
This leads to a coupled bulk-bulk problem of the form 
\begin{subequations}
\label{eq:coupledSys}
\begin{align}
	\dot  u - \kappa\, \Delta u &= f \qquad\text{in } \Omega, \label{eq:coupledSys:a} \\
	\dot w - \nabla( a_\eps \nabla w) &= 0 \qquad \text{on } \Omega_{\delta}, \label{eq:coupledSys:b} \\
	w - u &= 0 \qquad \text{on } \Gamma, \label{eq:coupledSys:c} \\
	\kappa\, \partial_\nu u -\delta^{-1}a_\eps\partial_\nu w&=0 \qquad \text{on }\Gamma, \label{eq:coupledSys:d}\\
	a_\eps \partial_\nu w &=0\qquad \text{on }\partial\Omega_\delta\setminus \Gamma \label{eq:coupledSys:e}
\end{align}
\end{subequations}
with initial conditions for $u$ and $w$. 
Here, $\nu$ denotes the normal outer vector on the boundary. 
Assuming~$\delta$ to be small, we consider the limit $\delta\to 0$ in order to reduce the surrounding domain~$\Omega_{\delta}$ to a boundary layer. 
This means that the original bulk-bulk coupling is replaced by a bulk-surface coupling, which can be considered as a PDE with non-standard boundary conditions. 
% limit equation
More precisely, this leads to a heat equation with dynamic (and heterogeneous) boundary conditions, namely 
\begin{subequations}
\label{eq:dynBC}
\begin{align}
	\dot  u - \kappa\, \Delta u &= f \qquad\text{in } \Omega, \label{eq:dynBC:a} \\
	\dot u - \nabla_\Gamma\cdot( a_\eps \nabla_\Gamma u) + \kappa\,\partial_\nu u &= 0 \qquad \text{on } \Gamma. \label{eq:dynBC:b}
\end{align}
\end{subequations}
Note that by~$\nabla_\Gamma$ we denote the gradient in tangential direction of the boundary~$\Gamma=\partial\Omega$. 
In the special case $a_\eps\equiv 1$ the differential operator simplifies to the Laplace-Beltrami operator, see~\cite[Ch.~16.1]{GilT01}. 
In the general case with~$a_\eps\in L^\infty(\Gamma)$, $a_\eps \ge \alpha > 0$, the corresponding operator~$\calA_\eps\colon H^1(\Gamma) \to H^1(\Gamma)^*$ still satisfies a G\aa rding inequality, namely 
\[
  \langle \calA_\eps p, p\rangle 
  = \int_\Gamma a_\eps \nabla_\Gamma p\cdot \nabla_\Gamma p\dx
  \ge \alpha\, \|\nabla_\Gamma p\|^2_{L^2(\Gamma)}
  = \alpha\, \| p\|^2_{H^1(\Gamma)} - \alpha\, \| p\|^2_{L^2(\Gamma)}.
\]
The following result is devoted to the limiting process. 
\begin{theorem}\label{thm:limit}
Assume the boundary~$\Gamma$ to be smooth. 
Then, system~\eqref{eq:dynBC} is the limit of~\eqref{eq:coupledSys} as~$\delta\to 0$.
\end{theorem}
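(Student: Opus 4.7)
The plan is to perform a (formal) asymptotic analysis in the thin layer $\Omega_\delta$, reducing equation~\eqref{eq:coupledSys:b} to a surface equation on $\Gamma$ as $\delta \to 0$, and to use the transmission conditions \eqref{eq:coupledSys:c}--\eqref{eq:coupledSys:e} to transport this limit into a dynamic boundary condition for the bulk problem~\eqref{eq:coupledSys:a}.

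First, I would introduce tubular coordinates in $\Omega_\delta$: since $\Gamma$ is smooth, every $x \in \Omega_\delta$ admits a unique decomposition $x = \xi + r\,\nu(\xi)$ with $\xi = P_\delta(x) \in \Gamma$ and $r \in (0,\delta)$, and after the normal rescaling $r = \delta y$, $y \in (0,1)$, the operator $\nabla\cdot(a_\eps\nabla\,\cdot\,)$ splits into a normal part behaving like $\delta^{-2}\partial_y(a_\eps\partial_y\,\cdot\,)$ and a tangential part $\nabla_\Gamma\cdot(a_\eps\nabla_\Gamma\,\cdot\,)$, up to curvature corrections that vanish with $\delta$. Crucially, by the structural assumption $a_\eps(x)=a_\eps(P_\delta(x))$ no $y$-derivatives fall on $a_\eps$.

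Second, I would integrate~\eqref{eq:coupledSys:b} in the normal variable over $(0,\delta)$, divide by $\delta$, and use the Neumann condition~\eqref{eq:coupledSys:e} at $r=\delta$ together with the transmission condition~\eqref{eq:coupledSys:d} at $r=0$. The normal divergence contributes only boundary terms, leaving
\[
  \frac{1}{\delta}\int_0^\delta \dot w \dr
  \;+\; \kappa\,\partial_\nu u
  \;-\; \frac{1}{\delta}\int_0^\delta \nabla_\Gamma\!\cdot\!(a_\eps\nabla_\Gamma w)\dr \;=\; 0 \qquad \text{on } \Gamma.
\]
Condition~\eqref{eq:coupledSys:d} forces $\partial_\nu w = O(\delta)$ while~\eqref{eq:coupledSys:c} pins $w|_{r=0}=u|_\Gamma$, so a scaled Poincar\'e inequality in the normal direction yields $w(\xi,r) \to u|_\Gamma(\xi)$ uniformly in $r$. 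Passing to the limit, the two layer averages converge to $\dot u|_\Gamma$ and $\nabla_\Gamma\cdot(a_\eps\nabla_\Gamma u|_\Gamma)$, respectively, which is exactly~\eqref{eq:dynBC:b}; equation~\eqref{eq:dynBC:a} is inherited unchanged from~\eqref{eq:coupledSys:a}.

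The main obstacle is turning this formal reduction into a rigorous convergence result. One must establish a priori energy estimates for the solutions $(u_\delta,w_\delta)$ of~\eqref{eq:coupledSys} that are uniform in both $\delta$ and $\eps$, control the $y$-oscillation of $w_\delta$ on the rescaled layer, and pass to the limit in the heterogeneous surface term without assuming scale separation on $a_\eps$. A clean route is to test the weak formulation of~\eqref{eq:coupledSys} with $(u_\delta,w_\delta)$ themselves to obtain $\delta$-uniform bounds on $\delta^{-1/2}\sqrt{a_\eps}\,\partial_\nu w_\delta$ and $\sqrt{a_\eps}\,\nabla_\Gamma w_\delta$, and then to invoke a compactness argument in the spirit of (periodic or stochastic) unfolding in the normal variable to identify the effective tangential operator.
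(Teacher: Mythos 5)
Your overall plan — rescale to a fixed reference cylinder $\Gamma\times(0,1)$, integrate the layer equation in the normal variable, use \eqref{eq:coupledSys:d}--\eqref{eq:coupledSys:e} to kill the normal flux terms, and control the $y$-dependence via the $\delta^{-2}$ weight on $\partial_y$ — is a legitimate formal derivation and reproduces \eqref{eq:dynBC:b} at that level. But it is a genuinely different route from the paper's. The paper does not pass to the limit in the PDE directly; it views \eqref{eq:coupledSys} as a gradient flow, writes the elliptic part as an energy functional $E_\delta$ on the rescaled cylinder, and proves \emph{Mosco convergence} $E_\delta\to E_0$ (Appendix~\ref{app:Mosco}), importing both the $\delta$-uniform a~priori estimates and the abstract ``Mosco convergence of energies $+$ uniform bounds $\Rightarrow$ convergence of evolutions'' machinery from the cited reference \cite{Lie13}. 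This variational packaging is what lets the paper sidestep all of the term-by-term limit passages (and the control of time derivatives) that your direct approach would have to carry out by hand. Your approach would need, in addition to the spatial estimates you mention, an Aubin--Lions-type compactness argument and separate identification of the limit of each term of the weak evolution equation; the gradient-flow framework handles this in one stroke.

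There is also a concrete misconception in the last paragraph. The theorem takes the limit $\delta\to 0$ with $a_\eps$ \emph{fixed}; $\eps$ plays no role here. Because $a_\eps(x)=a_\eps(P_\delta(x))$ depends only on the tangential variable, $a_\eps$ is unchanged by the normal rescaling, no $y$-derivative hits it, and the tangential term $\int a_\eps\,\nabla_\Gamma W_\delta\cdot\nabla_\Gamma q$ passes to the limit by ordinary weak convergence of $\nabla_\Gamma W_\delta$ against a fixed $L^\infty$ coefficient. There is no homogenization limit, no scale separation to worry about, and no ``effective tangential operator'': the operator $\nabla_\Gamma\cdot(a_\eps\nabla_\Gamma\,\cdot\,)$ appears verbatim in \eqref{eq:dynBC:b}. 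Invoking periodic or stochastic unfolding in the normal variable would be solving a problem that is not present; the paper explicitly observes that the only change relative to the constant-coefficient case of \cite{Lie13} is that the uniform lower bound $a_\eps\ge\alpha>0$ must be used to keep the a~priori estimates, and that the Mosco argument then goes through unchanged.
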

\begin{proof}
This follows with the arguments of~\cite{Lie13} in the special case of $\alpha_r=\beta_E=1$ and $W\equiv 0$. The only difference is that therein the coefficient $a_\eps$ is assumed to be constant. 
We briefly outline the main steps and modifications and refer to~\cite{Lie13} for more details. 
It is still possible to reformulate the bulk-bulk problem \eqref{eq:coupledSys} as an energy balance. Due to the uniform positive lower bound on $a_\eps$ the a priori estimates for the solution derived in~\cite{Lie13} still hold true.
Employing them, the convergence of the dynamic part in the energy balance formulation follows in the same way as before because we have made no alterations concerning the time derivatives.
Thus, we only have to show the {\em Mosco convergence} of the energy functional, which corresponds to the elliptic parts in \eqref{eq:coupledSys}. Since this requires some further notation, we postpone the detailed proof to~Appendix~\ref{app:Mosco}.
\end{proof}
\begin{remark}
Since the limiting process is only concerned with the equation in $\Omega_\delta$, one can replace the heat equation in the interior domain~$\Omega$ by a more involved or even nonlinear parabolic problem. 
Furthermore, it is possible to include in~\eqref{eq:coupledSys:b} a reacting term $\alpha u$ as well as a (sufficiently smooth) inhomogeneity $g\in L^2(\Omega_\delta)$, which only varies in tangential direction. 
\end{remark}
Throughout this paper, equation~\eqref{eq:dynBC} serves as a model problem for a parabolic system with dynamic boundary conditions including heterogeneities. Additionally, we allow an inhomogeneity $g\in L^2(\Gamma)$ in~\eqref{eq:dynBC:b} and assume~$\Gamma$ to be polygonal/polyhedral and thus, only Lipschitz continuous. 
The latter may be given as an approximation of a smooth domain, meaning that the obtained finite element approximation includes an error coming from the discrepancy of the boundaries. Such situations can be analyzed with the help of a {\em lift operator}, cf.~\cite{Dzi88} or~\cite[Sect.~4.2]{DziE13}, but are not in the focus of this research. 
%
%
%=============================================================================
\subsection{Weak formulation}\label{sect:model:weak}
In the remainder of this paper, the computational domain~$\Omega\subseteq \R^d$ is assumed to be a Lipschitz domain with a polygonal/polyhedral boundary~$\Gamma$, on which the dynamic boundary conditions are defined.
Further we assume right-hand sides~$f\colon [0,T] \to L^2(\Omega)$ and~$g\colon [0,T] \to L^2(\Gamma)$. 
% weak as in KovL17
The weak formulation presented in~\cite{KovL17} reads
\begin{align}
\label{eq:weakKonL}
  m(\dot u, v) + a(u, v) 
  = (f, v)_\Omega + (g, v)_\Gamma
\end{align}
with~$(\,\cdot\,, \cdot\,)_\Omega$ and~$(\,\cdot\,, \cdot\,)_\Gamma$ denoting the $L^2$-inner products on~$\Omega$ and~$\Gamma$, respectively, and with the bilinear forms
\begin{align*}
  m(u, v)
  \coloneqq \int_{\Omega} u\, v\dx + \int_{\Gamma} u\, v\dx, \quad
  a(u, v) 
  \coloneqq \int_{\Omega} \kappa\, \nabla u\cdot \nabla v\dx + \int_{\Gamma} a_\eps \nabla_\Gamma u\cdot \nabla_\Gamma v\dx. 
\end{align*}
The corresponding trial and test space is given by $\Vkl \coloneqq \{ v\in H^1(\Omega)\ |\ v|_\Gamma \in H^1(\Gamma) \}$. 
Thus, the two bilinear forms include boundary integrals and the space~$\Vkl$ requires a trace in $H^1(\Gamma)$. 
We will see in Section~\ref{sect:disc:naive} that this approach is not suitable for the considered situation with a heterogeneity on the boundary. 

% alternative weak formulation
In order to allow different discretizations in the bulk and on the boundary later on, we derive an alternative weak formulation. 
Here we follow~\cite{Alt19} and consider~\eqref{eq:dynBC} as a coupled system, which interprets $u$ and $u|_\Gamma$ as two separate variables.  
For this, we introduce~$p\coloneqq u|_\Gamma$ as a new variable, which turns the dynamic boundary condition~\eqref{eq:dynBC:b} into~$\dot p - \nabla_\Gamma\cdot( a_\eps \nabla_\Gamma p) + \kappa\,\partial_\nu u = g$ on~$\Gamma$. 
As ansatz spaces we define
\[
  \V \coloneqq H^1(\Omega), \qquad
  \Q \coloneqq H^1(\Gamma)
\]
for~$u$ and~$p$, respectively. 
Note that we need a trace in $H^1(\Gamma)$ due to the generalized Laplace-Beltrami operator but do not include this into the space~$\V$. 

Considering test functions $v\in\V$ for~\eqref{eq:dynBC:a} and $q\in \Q$ for~\eqref{eq:dynBC:b}, integration by parts yields 
\begin{subequations}
	\label{eq:weakForm}
	\begin{align}
	\int_{\Omega} \dot u\, v \dx + \int_\Omega \kappa\, \nabla u\cdot \nabla v \dx - \int_\Gamma \kappa\, \partial_\nu u\, v \dx 
	&= \int_\Omega f\, v\dx, \label{eq:weakForm:a}\\
	\int_\Gamma \dot p\, q \dx + \int_\Gamma a_\eps \nabla_\Gamma p\cdot \nabla_\Gamma q\dx + \int_\Gamma \kappa\,\partial_\nu u\, q\dx 
	&= \int_\Gamma g\, q\dx. \label{eq:weakForm:b}
	\end{align}
\end{subequations}
For the differential operators we introduce~$\calK\colon \V \to \V^*$ as the weak Laplacian (weighted by~$\kappa$) and~$\calA_\eps\colon \Q \to \Q^*$ defined by $\langle \calA_\eps p, q\rangle \coloneqq \int_\Gamma a_\eps \nabla_\Gamma p\cdot \nabla_\Gamma q\dx$. 
Moreover, we implement the connection of~$u|_\Gamma$ and $p$ in form of a constraint to the system equations. 
With~$\M \coloneqq H^{-1/2}(\Gamma)$ we define the constraint operator~$\calB\colon (\V \times \Q) \to \M^* = H^{1/2}(\Gamma)$ by 
\[
\calB(u,p) 
\coloneqq p - u|_\Gamma.
\]
We emphasize that this operator satisfies an inf-sup condition, see~\cite[Lem.~5]{Alt19}. 
Using the Lagrangian method to enforce the constraint, we introduce an additional unknown, namely the Lagrange multiplier~$\lambda\colon [0,T] \to \M$, which leads to the PDAE formulation 
\begin{subequations}
	\label{eq:PDAE}
	\begin{align}
	\begin{bmatrix} \dot u \\ \dot p  \end{bmatrix}
	+ \begin{bmatrix} \calK &  \\  & \calA_\eps \end{bmatrix}
	\begin{bmatrix} u \\ p  \end{bmatrix}
	+ \calB^*\lambda 
	&= \begin{bmatrix} f \\ g \end{bmatrix} \qquad \text{in } \V^*\times \Q^*, \label{eq:PDAE:a}\\
	\calB(u,p) \hspace{3.1em} &= \phantom{[]} 0\hspace{2.8em} \text{in } \M^*. \label{eq:PDAE:b}
	\end{align}
\end{subequations}
Note that this includes operator matrices and that a test function~$(v,0) \in \V\times\Q$ applied to~\eqref{eq:PDAE:a} equals equation~\eqref{eq:weakForm:a}, where the appearance of the normal derivative of~$u$ has been replaced by the Lagrange multiplier. 
Accordingly, the test function~$(0, q) \in \V\times\Q$ yields~\eqref{eq:weakForm:b} and shows that the PDAE~\eqref{eq:PDAE} is equivalent to the weak formulation~\eqref{eq:weakForm}. In particular, one can show under sufficient regularity assumptions that the solutions coincide with~$\lambda=\kappa\,\partial_\nu u$.

For later use, we define the bilinear forms associated to the differential operators~$\calK$ and~$\calA_\eps$, namely 
\[
  \frakk(u,v) 
  \coloneqq \langle\calK u, v\rangle,\qquad 
  \fraka(p,q) 
  \coloneqq \langle\calA_\eps p, q\rangle. 
\]
Recall that~$\alpha>0$ denotes the lower bound of the diffusion coefficient~$a_\eps$ and that the bilinear form~$\fraka$ only satisfies a G\aa rding inequality in~$\Q$. 
Because of this, we introduce the elliptic bilinear form~$\frakaTilde\colon \Q\times\Q\to\R$ by
\[
  \frakaTilde(p, q)
  \coloneqq \fraka(p ,q) + (\alpha\,p ,q)_\Gamma,  
\]
which satisfies~$\frakaTilde(p, p) \ge \alpha\, \|p\|^2_\Q$. 
Introducing the bilinear form~$\frakb\colon \M^* \times\M\to\R$ by~$\frakb(q,\lambda) \coloneqq \langle q,\lambda\rangle_\Gamma$, we have 
\[
  \frakb(q-v|_\Gamma,\lambda) 
  = \langle q-v|_\Gamma,\lambda\rangle_\Gamma 
  = \langle\calB(v,q), \lambda\rangle. 
\]
As a consequence, we can rewrite system~\eqref{eq:PDAE} in the form 
\begin{subequations}
\label{eq:PDAE:bilinear}
\begin{align}
	(\dot u, v)_\Omega + (\dot p, q)_\Gamma 
	+ \frakk(u,v) + \fraka(p,q)
	+ \frakb(q-v|_\Gamma, \lambda)
	&= (f, v)_\Omega + ( g, q)_\Gamma, \\
	\frakb(p-u|_\Gamma, \mu) &= 0
\end{align}
\end{subequations}
with test functions~$v\in\V$, $q\in\Q$, and~$\mu\in\M$. 
%
%
%=============================================================================
\subsection{Regularity of inhomogeneous solutions}\label{sect:model:regularity}
Standard discretization schemes for homogeneous Dirichlet boundary problems defined on convex Lipschitz domains usually assume $H^2$-regularity of the solution leading to the well-known optimal convergence rates.  
Prescribed Dirichlet data may already change the regularity of the solution, since a trace in~$H^{1/2}(\Gamma)$ only implies a solution in~$H^1(\Omega)$. 

% regularity on smooth domains
For dynamic boundary conditions on smooth domains including the Laplace-Beltrami operator, a standard regularity assumption is~$u(t)\in H^2(\Omega)$ with $u(t)|_\Gamma \in H^2(\Gamma)$. In this situation, linear finite elements yield the full second-order convergence in the $L^2$-norm, cf.~\cite[Th.~3.6]{KovL17}. 
Recall that we only consider Lipschitz domains and that we additionally have a heterogeneous diffusion coefficient~$a_\eps$ such that $H^2$-regularity is not to be expected on the boundary. Further, the coupling of bulk and surface dynamics do not allow a simple decomposition of the solution as in the pure Dirichlet case. 
% minimal regularity for Lipschitz domains
The quite general assumptions on the computational domain and the coefficient~$a_\eps$ only ensure a solution~$p$ with values in~$\Q = H^1(\Gamma)$. By standard results on the trace operator, see e.g.~\cite[Ch.~2.6]{SauS11}, we cannot expect more regularity than~$u(t)\in H^{3/2}(\Omega)$ in the interior. This also fits to the numerical observations in Section~\ref{sect:numerics:exp1}.

% H^2 regularity - 2d
For polygonal convex domains in two space dimensions, i.e., $d=2$, we can expect~$u(t)\in H^2(\Omega)$ if~$p(t)\in H^{3/2}(\Gamma_i)$ for each edge $\Gamma_i \subseteq \Gamma$ and $p(t)\in C(\partial\Omega)$, i.e., $p$ is continuous at the vertices of the boundary, cf.~\cite[Thm.~5.1.2.4]{Gri85}. 
% H^2 regularity - 3d
For~$d=3$ we know from~\cite[Cor.~5.5.2, Rem.~5.5.3]{Moi11} that in a convex Lipschitz polyhedron we have~$u(t)\in H^{s+1/2}(\Omega)$ if~$p(t)\in H^1(\Gamma)$, $p(t)\in H^s(\Gamma_i)$ for each face~$\Gamma_i\subseteq\Gamma$, and~$\Delta u\in L^2(\Omega)$ for~$1<s<3/2$. 
We emphasize that the ``border cases''~$s=1$ and~$s=3/2$ are excluded, so that on the one hand, $p$ needs to be a little more regular than simply $H^1(\Gamma)$ and, on the other hand, one cannot conclude full $H^2$-regularity of~$u$ with this argument. 
\begin{example}
In the numerical examples of Section~\ref{sect:numerics} we will consider~$d=2$ and the smooth but highly oscillatory coefficient
\[
a_\eps^\text{sm}(x) 
\coloneqq \frac{1}{ 2 + \cos(2\pi x \eps^{-1}) }.
\]
For the corresponding stationary problem~$-\nabla_\Gamma\cdot( a_\eps \nabla_\Gamma p) = g$ this results in a solution, where higher-order norms of $p$ are expected to scale with a negative power of $\eps$, e.g., $\|p\|_{H^{s}(\Gamma)}$  scales like~$\eps^{1-s}$ for integers $s\geq 1$, as discussed in~\cite{PetS12}. 
Moreover, if~$u$ solves the Poisson equation with boundary data~$p$, then~\cite[Cor.~5.5.2, Rem.~5.5.3]{Moi11} provides the stability result
\[
  \|u\|_{H^{s}(\Omega)}
  \leq C\, \big(\|p\|_{H^{s-1/2}(\Gamma)}+\|\Delta u\|_{L^2(\Omega)} \big).
\]
Thus, oscillatory boundary conditions lead to~$\eps$-dependent bounds of~$u$ for~$s>3/2$. This motivates the application of multiscale methods, which enable~$\eps$-independent convergence rates. 
\end{example}
%
%
%=============================================================================
%=========  Multiscale Method on the Boundary 
%=============================================================================
\section{Spatial Discretization}\label{sect:disc}
The heterogeneous nature of the boundary calls for a multiscale method on~$\Gamma$. 
Because of the very general structure of the diffusion coefficient~$a_\eps$, which does not assume periodicity or any separation of scales, we consider the LOD.  
On the other hand, we have a homogeneous problem in the bulk such that standard finite elements yield satisfactory results. 
We first follow the naive approach, which requires a very high mesh resolution also of the homogeneous domain. 
In order to allow different discretizations in the bulk and on the boundary, we design mixed finite element schemes based on the alternative formulation~\eqref{eq:PDAE}. 
This then enables efficient numerical schemes, which combine coarse grids on~$\Omega$ with multiscale methods on~$\Gamma$. 

% notation 
Throughout the paper we write~$a \lesssim b$ to indicate that there exists a generic constant~$C$, independent of spatial and temporal discretization parameters, such that~$a \leq C b$. 
%
%
%=============================================================================
\subsection{The naive finite element approach}\label{sect:disc:naive}
A straight-forward finite element approach considers the weak formulation~\eqref{eq:weakKonL} together with a uniform triangulation. 
The corresponding Galerkin approximation is given by~$u_h\colon [0,T] \to \Vhkl\subseteq \Vkl = \{ v\in H^1(\Omega)\ |\ v|_\Gamma \in H^1(\Gamma) \}$  and solves the semi-discrete system 
\begin{align}
\label{eq:KonL:semidiscrete} 
  m(\dot u_h, v_h) + a(u_h, v_h) 
  = (f, v_h)_\Omega + (g, v_h)_\Gamma
\end{align}
for all $v_h\in \Vhkl$ and some initial condition for $u_h(0)$. 
Let~$\calT$ be a uniform triangulation of the computational domain~$\Omega$ with mesh size~$h$ and~$\Vhkl \coloneqq \calP_1(\calT)$ the space of Lagrange finite elements, i.e., piecewise affine and globally continuous functions. 
In this setting, it is well-known that a high resolution, i.e., $h\lesssim \eps$, is necessary to capture the microscopic effects~\cite{Pet16}. 
This is due to the fact that~$\nabla_\Gamma v_h$ is piecewise constant on the boundary and thus, only the arithmetic mean of~$a_\eps$ enters the semi-discrete equations for coarse~$h$. 
This, however, does not reflect the correct microscopic behavior and leads to an extended pre-asymptotic phase in the approximation. 
Thus, the highly oscillating diffusion coefficient on the boundary needs to be compensated by a very small mesh size~$h$. 

Of course, this illustrates only the worst case and adaptive finite elements~\cite[Ch.~9]{BreS08} or general multiscale methods such as the LOD~\cite{MalP14} can be used to overcome these difficulties. 
Both approaches, however, have in common that the heterogeneity on the boundary affects the mesh in the interior of the domain. 
% new method 
The method introduced in this paper follows a different paradigm, namely the reformulation of the problem, which decouples the dynamics in the bulk and on the boundary. 
With this strategy it is sufficient to apply standard finite element schemes in the bulk and multiscale methods only on the surface and thus, on a lower-dimensional domain. 
%
%
%=============================================================================
\subsection{Mixed finite elements} 
As an alternative, we now consider discretizations of the PDAE system~\eqref{eq:PDAE}. 
This leads to so-called {\em mixed methods}~\cite[Ch.~III.4]{Bra07} with two different discretization schemes for~$u$ and~$p$. 
We consider conforming finite element discretizations based on finite-dimensional spaces 
\[
  \Vh \subseteq \V, \qquad
  \Qh \subseteq \Q, \qquad
  \Mh \subseteq \M
\]
of dimension~$n_u$, $n_p$, and $n_\lambda$, respectively.  
The spaces $\Vh$ and $\Mh$ will be discrete spaces consisting of piecewise polynomials based on a triangulation~$\calTO$ of $\Omega$. 
For simplicity we assume that~$\calTO$ is a uniform mesh with mesh size~$\HO$. 
On the other hand, $\Qh$ is defined w.r.t.~a mesh~$\calTG$ of $\Gamma$, which may be chosen independently of~$\calTO$. 

% Galerkin 
In any case, the Galerkin discretization of~\eqref{eq:PDAE} reads as follows: 
Find $\uh\colon [0,T] \to \R^{\dimu}$, $\ph\colon [0,T] \to \R^\dimp$, and~$\lh\colon [0,T] \to \R^\diml$ such that 
\begin{align*}
	\begin{bmatrix} M_\Omega &  \\  & M_\Gamma \end{bmatrix}
	\begin{bmatrix} \duh \\ \dph  \end{bmatrix}
	+ \begin{bmatrix} K &  \\  & A_\eps \end{bmatrix}
	\begin{bmatrix} \uh \\ \ph \end{bmatrix}
	+ B^T \lh 
	&= \begin{bmatrix} b_\Omega \\ b_\Gamma \end{bmatrix}, \\ 
	B \begin{bmatrix} \uh \\ \ph \end{bmatrix} \hspace{1.53cm}
	&= 0 
\end{align*}
for almost all times $t\in [0,T]$. 
Here, $M_\Omega\in\R^{\dimu, \dimu}$ and $M_\Gamma\in\R^{\dimp,\dimp}$ denote the mass matrices corresponding to an appropriate basis of~$\Vh$ and $\Qh$, respectively. 
The stiffness matrices $K\in \R^{\dimu,\dimu}$ and $A_\eps\in \R^{\dimp,\dimp}$ are the discrete versions of the differential operators~$\calK$ and $\calA_\eps$. 
Finally, $B\in \R^{\diml,\dimu+\dimp}$ is the discrete analogue of the constraint operator~$\calB$ and $b_\Omega$, $b_\Gamma$ correspond to the right-hand sides~$f$ and $g$, respectively. 

% discrete inf-sup
Although the discretization spaces~$\Vh$, $\Qh$, and $\Mh$ can be chosen independently on first sight, they need to be suitable in the sense of an inf-sup condition. 
In the following, we need to guarantee that the discrete spaces satisfy 
\begin{align}
\label{eq:discInfSup}
  \adjustlimits\inf_{\mu_H\in \Mh} \sup_{v_H\in \Vh, q_H\in \Qh} 
  \frac{|\langle \calB(v_H, q_H), \mu_H \rangle|}{\| (v_H, q_H) \|_{\V\times\Q} \|\mu_H\|_\M}
  \ge \beta > 0
\end{align}
with a constant~$\beta$, independent of the mesh sizes. 
Here, the corresponding norm is defined through
\[ 
  \| (v_H, q_H) \|_{\V\times\Q}
  \coloneqq \big( \| v_H \|^2_{\V} + \| q_H \|^2_{\Q} \big)^{1/2}.
\]
Note that such a condition automatically implies the full rank property of the (discrete) constraint matrix~$B$.

% FEM spaces 
The remaining task is to find suitable spaces~$\Vh$, $\Qh$, and~$\Mh$, which have good approximation properties and are stable in the sense of~\eqref{eq:discInfSup}. 
For this, we collect a number of standard finite element spaces, which will be used in the following. 
First, we introduce the standard piecewise polynomial spaces for triangulations~$\calTO$ into triangles (tetrahedra for $d=3$), namely
\begin{align*}
  \calP_k(\calTO) 
  &\coloneqq \big\{ v\in\V\ \big|\ v|_T \text{ is a polynomial of degree} \le k \text{ for all } T\in\calTO \big\}
\end{align*}
for~$k\ge 1$. 
Note that these spaces are of conforming type by definition and globally continuous.  
Similarly, we may define piecewise polynomial spaces for partitions into quadrilaterals (cubes for $d=3$), see~\cite[Ch.~3.5]{BreS08}. 
For the partition of the boundary, we define accordingly 
\begin{align*}
  \calP_k(\calTG) 
  &\coloneqq \big\{ v\in\Q\ \big|\ v|_T \text{ is a polynomial of degree} \le k \text{ for all } T\in\calTG \big\}
\end{align*}
for~$k\ge 1$. 
These function spaces are again globally continuous. On the boundary we also consider the discontinuous spaces 
\[
  \calP^\text{d}_\ell(\calTG) 
  \coloneqq \big\{ v\in L^2(\Gamma)\ \big|\ v|_T \text{ is a polynomial of degree} \le \ell \text{ for all } T\in\calTG \big\} 
\]
for~$\ell\ge 0$. 
Note that this defines a conforming subspace for $\M$. Finally, we introduce the space of edge/face-bubbles by
\begin{align*}
  \calE_\ell(\calTO) 
  &\coloneqq \big\{ v\cdot\psi_E\ \big|\ v|_T \text{ is a polynomial of degree} \le \ell \text{ for all } T\in\calTO, \\
  &\hspace{4.4cm}\psi_E \text{ is an edge/face-bubble for } E\subseteq\Gamma \big\} \subseteq \V. 
\end{align*}
Here, an edge-bubble~$\psi_E$ (face-bubble for $d=3$) equals the scaled product of the two (three for $d=3$) corresponding nodal basis functions~\cite[Ch.~1]{Ver96}. 
%
%
%=============================================================================
\subsection{Inf-sup stable discretizations}\label{sect:disc:infsup}
In this subsection, we present two classes of discretization schemes which are stable in the sense of the inf-sup condition~\eqref{eq:discInfSup}. 
They have in common that the stability is independent of the choice of~$\Qh$. 
\begin{proposition}\label{prop:InfsupBubbleTriang}
The conforming finite element spaces
\[
  \Vh \coloneqq \calP_k(\calTO) \oplus \calE_\ell(\calTO) \subseteq \V, \quad
  \Qh \subseteq \Q, \quad
  \Mh \coloneqq \calP^\text{d}_\ell(\calTO|_\Gamma) \subseteq \M
\]
satisfy a discrete inf-sup condition~\eqref{eq:discInfSup} for all parameters~$k\ge 1$, $\ell\ge 0$ and arbitrary~$\Qh$.  
\end{proposition}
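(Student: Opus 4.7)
The plan is to invoke the standard Fortin-operator route. Since~\eqref{eq:discInfSup} must hold for every admissible choice of $\Qh$, including the trivial one $\Qh = \{0\}$, it suffices to prove
\[
\adjustlimits\inf_{\mu_H \in \Mh} \sup_{v_H \in \Vh}
\frac{|\langle v_H|_\Gamma, \mu_H\rangle_\Gamma|}{\|v_H\|_\V \|\mu_H\|_\M}
\ge \beta > 0,
\]
from which the full statement then follows by restricting the outer supremum in~\eqref{eq:discInfSup} to $q_H = 0$. The continuous version of this inf-sup is immediate from surjectivity of the trace operator $\V = H^1(\Omega) \to H^{1/2}(\Gamma)$ with a bounded right inverse, and is a special case of \cite[Lem.~5]{Alt19}.

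By the classical Fortin criterion, the discrete inf-sup reduces to the construction of a linear operator $\Pi \colon \V \to \Vh$, bounded uniformly in $\HO$, that preserves the boundary pairing against $\Mh$, i.e.\
\[
\int_\Gamma (\Pi v - v)\, \mu_H \ds = 0
\qquad \text{for all } v \in \V,\ \mu_H \in \Mh.
\]
I would build $\Pi$ in two stages. First, take a quasi-interpolant $\Pi_0 \colon \V \to \calP_k(\calTO)$ of Cl\'ement or Scott--Zhang type, with the usual $\V$-stability and local residual bounds $\|v - \Pi_0 v\|_{L^2(T)} \lesssim \HO\, \|\nabla v\|_{L^2(\omega_T)}$ and $\|\nabla(v - \Pi_0 v)\|_{L^2(T)} \lesssim \|\nabla v\|_{L^2(\omega_T)}$ on element patches $\omega_T$. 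Second, for each boundary edge/face $E \subseteq \Gamma$, which is a face of a unique element $T_E \in \calTO$, add a bubble correction $v_E \in \calE_\ell(\calTO)$ supported on $T_E$ with $v_E|_E = \psi_E\, p_E$, where $p_E$ is the unique polynomial of degree $\le \ell$ on $E$ solving the local Galerkin problem
\[
\int_E \psi_E\, p_E\, q \ds = \int_E (v - \Pi_0 v)\, q \ds
\qquad \text{for all polynomials } q \text{ of degree } \le \ell.
\]
Unique solvability rests on the classical bubble argument: positive definiteness of $(p,q) \mapsto \int_E \psi_E pq\ds$ on polynomials of bounded degree follows from the strict positivity of $\psi_E$ in the relative interior of $E$. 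Setting $\Pi v \coloneqq \Pi_0 v + \sum_E v_E$ then yields the preservation property by construction, since $\Mh$ consists precisely of polynomials of degree $\le \ell$ per boundary face.

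For the $\V$-boundedness, standard scaling on $T_E$ delivers $\|p_E\|_{L^2(E)} \lesssim \|v - \Pi_0 v\|_{L^2(E)}$ together with the bubble estimates $\|v_E\|_{L^2(T_E)}^2 \lesssim \HO\, \|p_E\|_{L^2(E)}^2$ and $\|\nabla v_E\|_{L^2(T_E)}^2 \lesssim \HO^{-1}\, \|p_E\|_{L^2(E)}^2$. Combining these with the multiplicative trace inequality $\|w\|_{L^2(E)}^2 \lesssim \HO^{-1}\|w\|_{L^2(T_E)}^2 + \HO\,\|\nabla w\|_{L^2(T_E)}^2$ applied to $w = v - \Pi_0 v$, and with the quasi-interpolation bounds, summation over all boundary edges/faces yields $\|\sum_E v_E\|_\V \lesssim \|\nabla v\|_{L^2(\Omega)}$, and hence $\|\Pi v\|_\V \lesssim \|v\|_\V$ with a constant depending only on $k$, $\ell$, and the shape regularity of $\calTO$.

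The main technical obstacle is exactly this bubble-correction analysis: each single $v_E$ has an $H^1$-norm that individually blows up like $\HO^{-1/2}\,\|v-\Pi_0 v\|_{L^2(E)}$ under refinement, so the argument crucially needs the multiplicative trace inequality to reabsorb the missing $\HO^{1/2}$ factor before summation. Once $\Pi$ is in place, the discrete inf-sup follows by a routine Fortin computation: given $\mu_H \in \Mh$, pick $v^\star \in \V$ nearly realizing the continuous supremum against $\mu_H$ with constant $\beta_0$, set $v_H = \Pi v^\star$, and use $\int_\Gamma v_H\, \mu_H\ds = \int_\Gamma v^\star\, \mu_H\ds$ together with $\|v_H\|_\V \le C_F \|v^\star\|_\V$ to conclude with $\beta = C_F^{-1} \beta_0$, independent of $\HO$ and of the particular choice of $\Qh$.
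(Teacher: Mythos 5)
Your argument is correct, and the opening move is identical to the paper's: take $q_H=0$ so that the inf-sup condition~\eqref{eq:discInfSup} reduces to the pairing between~$\Vh$ and~$\Mh$, with~$\Qh$ dropping out entirely. Where you part ways is what happens next: the paper stops at this reduction and cites \cite[Th.~2.3.7]{Lip04} (and \cite[Ch.~4.1]{Wie19}) for the remaining discrete inf-sup between bubble-enriched $H^1(\Omega)$-conforming elements and discontinuous boundary multipliers, whereas you carry out the classical Fortin construction in full. Your construction --- quasi-interpolant $\Pi_0$ plus, for each boundary face $E\subseteq\Gamma$, a bubble correction $v_E = p_E\,\psi_E$ supported on the unique adjacent element $T_E$, with $p_E$ fixed by the local bubble-weighted moment problem --- is exactly the device that makes $\calE_\ell(\calTO)$ necessary and sufficient here. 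The boundedness chain is also right: positive definiteness of $(p,q)\mapsto\int_E\psi_E pq\ds$ gives $\|p_E\|_{L^2(E)}\lesssim\|v-\Pi_0 v\|_{L^2(E)}$, the scaled bubble estimates give $\|\nabla v_E\|_{L^2(T_E)}\lesssim \HO^{-1/2}\|p_E\|_{L^2(E)}$, and the multiplicative trace inequality applied to $w=v-\Pi_0 v$ produces exactly the $\HO^{1/2}$ factor that cancels the blow-up before summing over faces with finite overlap. What your more explicit route buys you is self-containment and a transparent dependence of the inf-sup constant on $k$, $\ell$, and shape regularity only; what the paper's route buys is brevity, since the Fortin argument is standard. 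In substance the two proofs coincide --- you have essentially unpacked the cited reference.

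One small wording nit: the reduction works because the supremum over $(v_H,q_H)\in\Vh\times\Qh$ is always at least the supremum over the slice $q_H=0$; phrasing it as "including the trivial choice $\Qh=\{0\}$" reads as if you were allowed to change the space, which is not quite the logic you actually use two lines later.
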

\begin{proof}
For an arbitrary $\mu_H\in \Mh \subseteq L^2(\Gamma)$ we set $q_H = 0$ and note that 
\[
  \sup_{v_H\in \Vh, q_H\in \Qh} 
  \frac{|\langle \calB(v_H, q_H), \mu_H \rangle|}{\| (v_H, q_H) \|_{\V\times\Q} \|\mu_H\|_\M}
  \ge \sup_{v_H\in \Vh} 
  \frac{(v_H|_\Gamma, \mu_H)_{\Gamma}}{\| v_H \|_{H^1(\Omega)} \|\mu_H\|_{H^{-1/2}(\Gamma)}}. 
\]
For this term, the inf-sup stability shown in~\cite[Th.~2.3.7]{Lip04} can be applied. 
More details can be found in~\cite[Ch.~4.1]{Wie19}. 
\end{proof}
Note that the choice of $\Qh$ does not influence the stability of the scheme, which allows to implement special multiscale finite element spaces at this point. This will be discussed in Section~\ref{sect:disc:LOD} below. 
\begin{remark}
The result of Proposition~\ref{prop:InfsupBubbleTriang} also holds true on quadrilateral meshes if the discrete space~$\calP_k(\calTO)$ is replaced by the corresponding space~$\calQ_k(\calTO)$ of piecewise polynomials with partial degree~$k$. 
\end{remark}
\begin{remark}
In the schemes considered in Proposition~\ref{prop:InfsupBubbleTriang} the Lagrange multipliers are defined on the mesh given by $\calTO|_\Gamma$ and the stabilization occurs with the help of bubble functions. We emphasize that a stabilization using $p$ is not as straightforward, since the norms in the inf-sup condition~\eqref{eq:discInfSup} do not match. This is due to the fact that we do not include $u|_\Gamma \in H^1(\Gamma)$ into the continuous model~\eqref{eq:PDAE}. 
\end{remark}	
The following result shows that stable schemes also exist without the need of bubble functions. 
\begin{proposition}
\label{prop:P1P1}
The conforming finite element spaces
\[
  \Vh\coloneqq \calP_1(\calTO)\subseteq \V, \quad
  \Qh\subseteq\Q, \quad
  \Mh\coloneqq \calP_1(\calTO|_\Gamma)\subseteq \M
\]
satisfy a discrete inf-sup condition~\eqref{eq:discInfSup} for arbitrary~$\Qh$.
\end{proposition}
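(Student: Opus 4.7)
The plan is to follow the same initial reduction as in the proof of Proposition~\ref{prop:InfsupBubbleTriang}: setting $q_H = 0$ in the supremum reduces the task to showing
\[
  \inf_{\mu_H \in \Mh} \sup_{v_H \in \Vh}
  \frac{|(v_H|_\Gamma, \mu_H)_\Gamma|}{\| v_H \|_{H^1(\Omega)}\, \|\mu_H\|_{H^{-1/2}(\Gamma)}}
  \ge \beta > 0,
\]
which is a pure bulk/trace inf-sup condition between $\calP_1(\calTO)$ and $\calP_1(\calTO|_\Gamma)$, independent of $\Qh$. Unlike in Proposition~\ref{prop:InfsupBubbleTriang}, one cannot cite~\cite{Lip04} directly here, since $\Mh$ is continuous of degree one rather than piecewise constant, and a simple zero-extension of $\mu_H$ as a trial function $v_H$ fails: it would yield a ratio of order $\HO^{1/2}\|\mu_H\|_{L^2(\Gamma)}/\|\mu_H\|_{H^{-1/2}(\Gamma)}$, which is bounded \emph{above} rather than below by a constant.

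To repair this, the plan is to construct a Fortin operator $\Pi_H: \V \to \Vh$ satisfying (i) $H^1$-stability, $\|\Pi_H v\|_{H^1(\Omega)} \lesssim \|v\|_{H^1(\Omega)}$, and (ii) preservation of the pairing with $\Mh$, that is, $((\Pi_H v - v)|_\Gamma, \mu_H)_\Gamma = 0$ for all $\mu_H \in \Mh$. Given such $\Pi_H$, the claim follows from the continuous trace duality. Indeed, for arbitrary $\mu_H \in \Mh$ one picks $\phi \in H^{1/2}(\Gamma)$ realizing $\|\mu_H\|_{H^{-1/2}(\Gamma)} = (\phi, \mu_H)_\Gamma / \|\phi\|_{H^{1/2}(\Gamma)}$, extends $\phi$ to $v \in \V$ via a bounded right-inverse of the trace, and takes $v_H = \Pi_H v \in \Vh$. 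Properties (i) and (ii) then yield $\|v_H\|_{H^1(\Omega)} \lesssim \|\phi\|_{H^{1/2}(\Gamma)}$ and $(v_H|_\Gamma, \mu_H)_\Gamma = (\phi, \mu_H)_\Gamma = \|\phi\|_{H^{1/2}(\Gamma)}\, \|\mu_H\|_{H^{-1/2}(\Gamma)}$, giving $\beta$ depending only on shape regularity.

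For the construction of $\Pi_H$ I would combine a standard Scott--Zhang interpolant $I_H^{\mathrm{SZ}}: H^1(\Omega) \to \calP_1(\calTO)$ (defined using boundary facets at boundary nodes) with the $L^2(\Gamma)$-projection $P_\Mh: L^2(\Gamma) \to \Mh$: define $\Pi_H v$ to agree with $I_H^{\mathrm{SZ}} v$ at interior nodes and with the nodal values of $P_\Mh(v|_\Gamma)$ at boundary nodes. Property~(ii) is then immediate from the orthogonality defining $P_\Mh$. For property~(i) decompose $\Pi_H v = I_H^{\mathrm{SZ}} v + w_H$, where $w_H \in \Vh$ vanishes at interior nodes and has boundary trace $P_\Mh(v|_\Gamma) - (I_H^{\mathrm{SZ}} v)|_\Gamma$. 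Since $w_H$ is supported in the boundary layer of width $\HO$, the standard inverse estimate gives $\|w_H\|_{H^1(\Omega)}^2 \lesssim \HO^{-1}\, \|w_H|_\Gamma\|_{L^2(\Gamma)}^2$, while both $P_\Mh(v|_\Gamma)$ and $(I_H^{\mathrm{SZ}} v)|_\Gamma$ approximate $v|_\Gamma$ in $L^2(\Gamma)$ with error $\lesssim \HO^{1/2} \|v|_\Gamma\|_{H^{1/2}(\Gamma)}$. Together with the trace theorem and Scott--Zhang stability this yields $\|\Pi_H v\|_{H^1(\Omega)} \lesssim \|v\|_{H^1(\Omega)}$.

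The main obstacle is ensuring that the $L^2$-approximation estimate for $P_\Mh$ (and hence $H^1$-stability of $\Pi_H$) holds for the mesh class considered, which essentially requires (local) quasi-uniformity of $\calTO|_\Gamma$. Should this hypothesis be too strong, a fallback is to replace $P_\Mh$ by a Clément-type quasi-interpolation onto $\Mh$ that is directly $H^{1/2}$-stable, or to enforce property~(ii) node-wise by means of a biorthogonal dual basis for $\Mh$ supported in boundary patches; both variants reach the same conclusion under only shape-regularity of $\calTO$.
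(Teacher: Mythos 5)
Your proof is correct, but it follows a genuinely different route from the paper's. You construct a Fortin operator $\Pi_H\colon\V\to\Vh$ (Scott--Zhang at interior nodes, $L^2(\Gamma)$-projection onto $\Mh$ at boundary nodes) and transfer the continuous trace inf-sup condition through it. The paper instead constructs the test function $v_H$ directly for a given $\mu_H$: it takes the Riesz representative $\tilde\mu\in H^{1/2}(\Gamma)$ of $\mu_H$, projects it onto $\Mh$ by the $L^2$-projection, and extends the result into the bulk by the discrete extension operator $E_H^0$ of Hipmair, Jerez-Hanckes and Mao (cited as Lemma~3.1 of HipJM15), whose $H^{1/2}(\Gamma)\to H^1(\Omega)$ stability does the work your boundary-layer inverse estimate does. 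Both proofs hinge on the same two technical ingredients: the defining property and $H^{1/2}$-stability of the $L^2$-projection onto $\Mh$, and a stable discrete lifting from $\calP_1(\calTO|_\Gamma)$ to $\calP_1(\calTO)$. Your Fortin route is more self-contained and in principle gives the operator $\Pi_H$ as a reusable object, at the cost of a somewhat longer stability argument; the paper's direct construction is shorter because it outsources the lifting to the literature. Your concern about quasi-uniformity is shared by both arguments (the $H^{1/2}$-stability of the $L^2$-projection needs it or at least mild mesh grading), and is harmless here since the paper assumes $\calTO$ uniform.
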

\begin{proof}
For an arbitrary $\mu_H\in \Mh$ with $\|\mu_H\|_{H^{-1/2}(\Gamma)}=1$, let $\tilde{\mu}\in H^{1/2}(\Gamma)$ be its Riesz representative with $\|\tilde \mu\|_{H^{1/2}(\Gamma)}=1$. 
Let $\tilde{\mu}_H\in \calP_1(\calTO|_\Gamma)$ be the $L^2$-projection of $\tilde{\mu}$. 
We set $q_H=0$ and $v_H=-E_H^0\tilde{\mu}_H$ with the extension operator $E_H^0$ from \cite[Lem.~3.1]{HipJM15}. Note that this is \emph{not} the trivial extension by zero on the interior nodes.
We then obtain
\begin{align*}
\sup_{v_H\in \Vh, q_H\in \Qh} 
\frac{\langle \calB(v_H, q_H), \mu_H \rangle}{\| (v_H, q_H) \|_{\V\times\Q}}
&\ge \frac{(-v_H|_\Gamma, \mu_H)_{\Gamma}}{\| v_H \|_{\V}}\\
&= \frac{(\tilde{\mu}_H, \mu_H)_{\Gamma}}{\| v_H \|_{\V}}
%&=\frac{(\tilde{\mu}_H-\tilde{\mu}, \mu_H)_{\Gamma}+(\tilde{\mu}, \mu_H)_{\Gamma}}{\| v_H \|_{\V}}\\
=\frac{(\tilde{\mu}, \mu_H)_{\Gamma}}{\| v_H \|_{\V}}
=\frac{\|\tilde{\mu}\|^2_{H^{1/2}(\Gamma)}}{\| v_H \|_{\V}},
\end{align*}
where we used the property of the $L^2$-projection.
According to \cite[Lem.~3.1]{HipJM15}, we have $\|v_H\|_{H^1(\Omega)}\lesssim \|\tilde{\mu}_H\|_{H^{1/2}(\Gamma)}$. 
Recall that the $L^2$-projection is stable in $H^1(\Gamma)$, and thus, also in $H^{1/2}(\Gamma)$ (see, e.g., \cite{BraPS02}) so that $\|\tilde{\mu}_H\|_{H^{1/2}(\Gamma)}\lesssim \|\tilde{\mu}\|_{H^{1/2}(\Gamma)}$.
All in all, we deduce
\[
\sup_{v_H\in \Vh, q_H\in \Qh} 
\frac{\langle \calB(v_H, q_H), \mu_H \rangle}{\| (v_H, q_H) \|_{\V\times\Q}}
\gtrsim \|\tilde{\mu}\|_{H^{1/2}(\Gamma)}=1.\qedhere
\]
\end{proof}
\begin{remark}
The above result can be generalized to $\Vh\coloneqq\calP_k(\calTO)$ and $\Mh\coloneqq\calP_\ell(\calTO|_\Gamma)$ for $0<\ell\leq k$ provided that (i) an extension operator $E_H:\calP_k(\calTO)\to \calP_k(\calTO|_\Gamma)$ in the spirit of \cite[Lem.~3.1]{HipJM15} exists and that (ii) the $L^2$-projection onto $\calP_k(\calTO|_\Gamma)$ is stable in $H^{1/2}(\Gamma)$. 
\end{remark}
\begin{remark} 
\label{rem:Q1}
Also the result of Proposition~\ref{prop:P1P1} can be extended to quadrilateral meshes, i.e., we may replace~$\calP_1(\calTO)$ and~$\calP_1(\calTO|_\Gamma)$ by the corresponding spaces of piecewise polynomials of partial degree~$1$. 
% d=2
For $d=2$ this only requires the bisection of all quadrilaterals into triangles. Then, we can follow the proof of Proposition~\ref{prop:P1P1} using a~$\V$-continuous mapping from the corresponding~$\calP_1$ space to~$\calQ_1(\calTO)$, i.e., to the space of the original partition.  
% auf dem Rand sind $\calP_1(\calTG)$ und $\calQ_1(\calTG)$ identisch !
% d=3
This conversion is more involved for~$d=3$, where the bulk and surface partition both need a transformation with particular continuity assumptions. 
% need $\calP_1(\calTO) --> \calQ_1(\calTO)$ continuous in V
% and  $\calQ_1(\calTG) --> \calP_1(\calTG)$ continuous in M
\end{remark}
Similar to Proposition~\ref{prop:InfsupBubbleTriang}, the inf-sup stability in Proposition~\ref{prop:P1P1} is independent of the choice of $\Qh$, which allows to insert an LOD space at this point.
Moreover, we emphasize that, in the case of a one-dimensional boundary, the ``over-regular'' discretization of $\calM$ by the $H^1$-conforming space $\calP_1(\calTG)$ does not lead to more degrees of freedom than the choice $\calP^\text{d}_0(\calTG)$. 
%
%
%=============================================================================
\subsection{LOD function spaces}\label{sect:disc:LOD}
Let $\calT_{\Gamma,h}$ and $\calT_{\Gamma,H}$ be two uniform meshes on $\Gamma$ with mesh sizes $h_\Gamma$ and $H_\Gamma$, respectively. We assume $h_\Gamma$ to be fine in the sense that it resolves the oscillations and discontinuities of $a_\eps$, whereas $\calT_{\Gamma,H}$ is assumed to be coarse in the sense that it is the restriction of  $\calT_\Omega$ to the boundary and in general does not resolve $a_\eps$.
Moreover, we assume that $\calT_{\Gamma,h}$ is a refinement of $\calT_{\Gamma,H}$.
To obtain faithful approximations with the standard finite element method, we need to utilize spaces $\calP_k(\calT_{\Gamma,h})$, which lead to a large number of degrees of freedom.
Instead, we introduce the Localized Orthogonal Decomposition, which modifies the space $\calP_1(\calT_{\Gamma,H})$ such that it yields satisfactory approximations.
For the construction, we consider the stable and surjective Cl{\'e}ment-type (quasi-) interpolation operator~$I_H\colon \calP_1(\calT_{\Gamma,h})\to \calP_1(\calT_{\Gamma,H})$ introduced in~\cite{CarV99}, namely 
\[
  I_H v
  \coloneqq\sum_{z\in\calN_{\Gamma, H}}\frac{(v, \phi_z)_\Gamma}{(1,\phi_z)_\Gamma}\,\,\phi_z. 
\]
Here, $\calN_{\Gamma, H}$ denotes the set of vertices of $\calT_{\Gamma, H}$ and $\phi_z\in \calP_1(\calT_{\Gamma, H})$ is the standard nodal basis function (``hat function'') associated with the vertex $z$.
We denote by $W_h$ the kernel of this interpolation operator.
Moreover, we note the following properties of $I_H$ that we will use in the error analysis.
For any $T\in \calT_{\Gamma,H}$ and~$q\in \calP_1(\calT_{\Gamma,h})$ the operator~$I_H$  satisfies the estimate 
\begin{equation}\label{eq:IH:stabapprox}
  H_\Gamma^{-1}\|q-I_H q\|_{L^2(T)}+\|\nabla I_H q\|_{L^2(T)}
  \lesssim \|\nabla q\|_{L^2(U(T))},
\end{equation}
where $U(T)$ denotes all neighboring elements of $T$, i.e., $U(T)=\{T^\prime \in \calT_{\Gamma,H}\, |\, T^\prime\cap T\neq\emptyset\}$.
Denoting by $\Pi_H$ the global $L^2(\Gamma)$-projection onto $\calP_1(\calT_{\Gamma,H})$, it holds that $I_H=I_H\circ \Pi_H$ and  $W_h=\ker(I_H)=\ker(\Pi_H|_{\calP_1(\calT_{\Gamma,h})}) \subseteq \calP_1(\calT_{\Gamma,h})$, see \cite{MalP15}.
Hence, $\calP_1(\calT_{\Gamma,h})=W_h\oplus \calP_1(\calT_{\Gamma,H})$
with
\begin{equation}\label{eq:IH:L2ortho1}
(\calP_1(\calT_{\Gamma,H}), W_h)_{\Gamma}=0,
\end{equation}
see \cite{MalP15}.

In the next step, we orthogonalize this splitting with respect to the operator $\calA_\eps$. 
For this, recall the definition of the elliptic bilinear form~$\frakaTilde$ introduced in Section~\ref{sect:model:weak}. 
We define the corrector Green's operator $\calG\colon \calP_1(\calT_{\Gamma,H})\to W_h$ via
\begin{equation}\label{eq:corrector}
  \frakaTilde(\calG q_H, w)
  = \frakaTilde(q_H, w)\qquad \text{for all}\quad w\in W_h.
\end{equation}
Note that \eqref{eq:corrector} is well-posed by the Lax-Milgram Theorem.  
The corrector Green's operator can be decomposed into~$\calG=\sum_{T\in \calT_{\Gamma,H}}\calG_T$, where $\calG_T$ solves 
\begin{equation}\label{eq:corrector-element}
  \frakaTilde(\calG_T q_H, w) 
  = \frakaTilde_T(q_H, w)\qquad \text{for all}\quad w\in W_h
\end{equation}
with $\frakaTilde_T$ being the restriction of $\frakaTilde$ to an element $T\subseteq \Gamma$, i.e., $\tilde\fraka_T(q_H, w) = \int_T a_\eps \nabla_\Gamma q_H\cdot \nabla_\Gamma w+\alpha q_H w\dx$. 
Since the computation of $\calG_T$ requires the solution of global fine-scale problems in general, we need to introduce localized approximations $\calG_{T,m}$ of $\calG_T$ and $\calG_m$ of $\calG$, respectively.
Let the $m$-th layer patch $U_m(T)$ be defined inductively as
\[ 
  U_m(T) \coloneqq U(U_{m-1}(T)), \qquad 
  U_0(T) \coloneqq T.
\]
The localized or truncated element corrector $\calG_{T,m}\colon \calP_1(\calT_{\Gamma,H})\to W_h\cap H^1_0(U_m(T))$ is now defined via
\begin{equation}\label{eq:corrector-element-local}
 \frakaTilde_{U_m(T)}(\calG_{T, m} q_H, w)
  =\frakaTilde_T(q_H, w)\qquad \text{for all}\quad w\in W_h\cap H^1_0(U_m(T))
\end{equation}
and we set $\calG_m\coloneqq\sum_{T\in \calT_{H, \Gamma}}\calG_{T, m}$. 
\begin{remark}
We define the correctors $\calG_T$ and $\calG_{T,m}$ with respect to the modified bilinear form~$\frakaTilde$, since it simplifies the analysis in Section~\ref{sect:errAnalysis}. 
However, a definition via~$\fraka$ is equally possible and leads to the same convergence results.
\end{remark}
The error between $\calG$ and $\calG_m$ decays exponentially with $m$ in the $H^1(\Gamma)$-norm as specified in the next lemma. For a proof we refer to, e.g., ~\cite{MalP14}.
\begin{lemma}
There exists a constant $0<\gamma<1$, which is independent of $h_\Gamma$, $H_\Gamma$, and $m$, such that for any $q_H\in Q_H$ it holds that
\begin{equation}
\label{eq:corrector:decay}
\|(\calG-\calG_m)q_H\|_{\Q}
\lesssim m^{(d-1)/2}\, \gamma^m\,\|q_H\|_{\Q}.
\end{equation}
\end{lemma}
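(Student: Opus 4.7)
The plan is to follow the by-now standard LOD localization argument of~\cite{MalP14}, adapted to the surface setting with the coercive bilinear form $\frakaTilde$. I will first establish exponential decay for a single element corrector $\calG_T - \calG_{T,m}$ and then assemble the global estimate by summing over $T\in \calT_{\Gamma,H}$ and exploiting the finite overlap of the patches $U_m(T)$.

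Fix $T\in \calT_{\Gamma,H}$ and set $e_T \coloneqq (\calG_T - \calG_{T,m})q_H \in W_h$. Since $\calG_{T,m}q_H \in W_h\cap H^1_0(U_m(T))$ is the Galerkin projection of $\calG_T q_H$ with respect to $\frakaTilde$, the coercivity $\frakaTilde(v,v)\ge \alpha\|v\|_\Q^2$ and the orthogonality property of $\calG_{T,m}$ yield $\alpha\|e_T\|_\Q^2 \le \frakaTilde(e_T, e_T - w)$ for every $w\in W_h\cap H^1_0(U_m(T))$. I choose a piecewise affine cut-off $\eta$ on $\calT_{\Gamma,H}$ that vanishes on $U_{m-2}(T)$ and equals $1$ outside $U_{m-1}(T)$, and then set $w\coloneqq e_T - (\eta e_T - I_H(\eta e_T))$. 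Since $\eta e_T - I_H(\eta e_T) \in W_h = \ker I_H$ and is supported in the closure of $\Gamma\setminus U_{m-2}(T)$, this is an admissible test function once $m\ge 2$. Substituting back, only contributions from the annulus $A_m(T)\coloneqq U_{m-1}(T)\setminus U_{m-2}(T)$ survive. On $A_m(T)$ I bound these contributions using $\|\nabla\eta\|_{L^\infty}\lesssim H_\Gamma^{-1}$ together with the local stability and approximation property~\eqref{eq:IH:stabapprox} of $I_H$; it is precisely the presence of the $L^2$-term in $\frakaTilde$ that absorbs the lower-order contribution coming from $I_H$ without producing an $H_\Gamma^{-1}$-blow-up. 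This yields the one-step contraction
\begin{equation*}
  \|e_T\|_\Q^2 \;\le\; C_0\,\|e_T\|_{\Q,\,U_m(T)\setminus U_{m-2}(T)}^2
\end{equation*}
with $C_0$ independent of $h_\Gamma$, $H_\Gamma$, and $m$. The standard telescoping-annulus iteration (cf.~\cite{MalP14}) then turns this into $\|e_T\|_\Q \lesssim \gamma^m \,\|q_H\|_{\Q,T}$ for some $\gamma = (C_0/(C_0+1))^{1/2} \in (0,1)$.

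For the global bound I write $(\calG-\calG_m)q_H = \sum_{T\in \calT_{\Gamma,H}} (\calG_T-\calG_{T,m})q_H$. Applying the triangle inequality in $\Q$, a Cauchy--Schwarz in the sum, and noting that the supports $U_m(T)$ have an overlap number of order $m^{d-1}$ (since $\Gamma$ is $(d-1)$-dimensional), I obtain
\begin{equation*}
  \|(\calG-\calG_m)q_H\|_\Q^2
  \;\lesssim\; m^{d-1}\,\gamma^{2m}\sum_{T\in \calT_{\Gamma,H}}\|q_H\|_{\Q,T}^2
  \;\lesssim\; m^{d-1}\,\gamma^{2m}\,\|q_H\|_\Q^2,
\end{equation*}
which after taking square roots gives the claimed estimate with the prefactor $m^{(d-1)/2}$.

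The main obstacle is the interaction between the cut-off procedure and the constraint $w\in W_h = \ker I_H$: the pointwise product $\eta\,e_T$ generally leaves $W_h$, so I have to restore membership by subtracting $I_H(\eta e_T)$, which in turn injects a new term into the estimate that must be controlled on the annulus using~\eqref{eq:IH:stabapprox}. It is precisely here that the modified bilinear form $\frakaTilde$, which has been equipped with the $\alpha\,(\cdot,\cdot)_\Gamma$ contribution, plays its role: without it one would need to balance an additional $L^2$-term against a pure $H^1$-coercivity, breaking the $m$-independence of $C_0$. All other ingredients (locality of~$I_H$, nesting of the patches, finite overlap) are routine consequences of the quasi-uniformity of $\calT_{\Gamma,H}$ and the properties recalled in Section~\ref{sect:disc:LOD}.
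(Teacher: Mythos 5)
The key gap is in the single-element step. After inserting $w=e_T-(\eta e_T-I_H(\eta e_T))$ you arrive at $\alpha\|e_T\|_\Q^2\le\frakaTilde(e_T,\eta e_T-I_H(\eta e_T))$ and claim that ``only contributions from the annulus survive''. This is not the case: expanding gives
\[
\frakaTilde(e_T,\eta e_T)=\int_\Gamma a_\eps\,\eta\,|\nabla_\Gamma e_T|^2 + \alpha\int_\Gamma \eta\, e_T^2 + \int_\Gamma a_\eps\, e_T\,\nabla_\Gamma e_T\cdot\nabla_\Gamma\eta,
\]
and the first two (positive) terms live on the \emph{entire} far field $\Gamma\setminus U_{m-2}(T)$ where $\eta>0$, not merely on the annulus where $\nabla_\Gamma\eta\neq 0$. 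Since $\|a_\eps\|_{L^\infty}$ may be much larger than the coercivity constant $\alpha$, this far-field bulk term cannot be absorbed into $\alpha\|e_T\|_\Q^2$, so the asserted one-step contraction $\|e_T\|_\Q^2\le C_0\|e_T\|^2_{\Q,U_m\setminus U_{m-2}}$ is not established. And even granting it, the ``telescoping-annulus iteration'' would not close: such an iteration runs over nested shell levels $k=2,4,\dots$ for a \emph{fixed} function, whereas $e_T=(\calG_T-\calG_{T,m})q_H$ itself changes if you change the patch width, so there is no family of inequalities to iterate.

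What is missing is the intermediate lemma that the \emph{ideal} element corrector $\calG_T q_H$ decays exponentially away from $T$. That decay follows by testing $\frakaTilde(\calG_T q_H,w)=\frakaTilde_T(q_H,w)$ against cutoffs $w\in W_h$ supported outside $U_1(T)$, where the right-hand side vanishes because $\frakaTilde_T$ integrates only over $T$; this gives $\|\calG_T q_H\|_{\Q,\Gamma\setminus U_k(T)}\le\theta\,\|\calG_T q_H\|_{\Q,\Gamma\setminus U_{k-2}(T)}$, which one iterates in $k$ with $\calG_Tq_H$ fixed. Only then does the Galerkin orthogonality become useful: it yields the Cea-type bound $\|e_T\|_\Q\lesssim\inf_{w\in W_h\cap H^1_0(U_m(T))}\|\calG_Tq_H-w\|_\Q$, and truncating $\calG_Tq_H$ to the patch reduces the infimum to the far-field norm $\|\calG_Tq_H\|_{\Q,\Gamma\setminus U_{m-3}(T)}\lesssim\gamma^m\|q_H\|_{\Q,T}$. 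This two-step structure is precisely what~\cite{MalP14}, to which the paper refers for the proof, carries out. A minor additional slip: your admissibility argument is aimed at the wrong object --- it is $w$, not $e_T-w$, that must lie in $H^1_0(U_m(T))$; this holds via the identity $w=(1-\eta)e_T-I_H((1-\eta)e_T)$ (using $I_He_T=0$) together with the one-layer locality of $I_H$. The final Cauchy--Schwarz/overlap step giving the $m^{(d-1)/2}$ prefactor is standard, but it too silently uses the exponential decay of the $e_T$ outside $U_m(T)$, since those functions are not compactly supported.
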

\begin{remark}
\label{rem:nodalInterpolation}
In this manuscript we use the Cl{\'e}ment-type operator for the LOD construction, because its favorable connection to the $L^2$-projection. This will be exploited in the analysis of Section~\ref{sect:errAnalysis}. 
There are, however, many other choices of (quasi-) interpolation operators possible and we refer to~\cite{EngHMP19} for a review.
In particular, for $d=2$ and, thus, a one-dimensional boundary $\Gamma$, the nodal interpolation operator is a valid choice. It immediately leads to completely localized corrector problems, i.e., $\calG_T$ in \eqref{eq:corrector-element} is automatically zero outside the element $T$ so that the localization step \eqref{eq:corrector-element-local} is not necessary with this choice of the interpolation operator. 
\end{remark}
\begin{remark}
If we compute $\calG$ using $\fraka$ and the nodal interpolation operator for a one-dimensional boundary, we can explicitly characterize $\calG q_H$ for $q_H\in Q_H$.
This can be used to show that $\fraka((\id-\calG)p_H, q_H)=\int_\Omega (a_\eps)_{\mathrm{harm}} \nabla p_H\cdot \nabla q_H\dx$, where $(a_\eps)_{\mathrm{harm}}$ is an element-wise constant coefficient consisting of the harmonic average of $a_\eps$ on the element, see \cite{HenMPSVK20}.
Hence, the LOD stiffness matrix can be computed as a standard finite element stiffness matrix with a modified coefficient.
This alternative characterization will be applied in the numerical experiments in Section~\ref{sect:numerics:exp1}.
Moreover, in case of a periodic $a_\eps$, $(a_\eps)_{\mathrm{harm}}$ is exactly the effective coefficient from homogenization theory \cite{GalP17,PetVV19}.
\end{remark}
%
%
%=============================================================================
%=========  Error Estimates
%=============================================================================
\section{Multiscale Error Analysis}\label{sect:errAnalysis}
For the analysis of the discretization error, we consider the PDAE in terms of the bilinear forms~$\frakk$, $\fraka$, and~$\frakb$ as introduced in~\eqref{eq:PDAE:bilinear}. 
In the following error analysis we focus on the multiscale phenomena on the boundary and fix the discrete spaces~$\Vh$ and~$\Mh$. Further, we only discuss the error which occurs due to the spatial discretization, i.e., we compare the exact with the semi-discrete solution. We consider the inf-sup stable pairing introduced in Proposition~\ref{prop:P1P1}, i.e.,  
\[
  \Vh\coloneqq\calP_1(\calTO)\subseteq \V, \qquad
  \Mh\coloneqq\calP_1(\calTO|_\Gamma)\subseteq \M 
\]
with corresponding mesh size~$H$. In the following, we discuss various choices for~$\Qh$ and start with the trivial case, in which~$\calTG$ coincides with~$\calTO|_\Gamma$ and~$\Qh=\Mh$. 

Recall that we write~$(\,\cdot\,, \cdot\,)_\Omega$ and~$(\,\cdot\,, \cdot\,)_\Gamma$ for the respective $L^2$-inner products on~$\Omega$ and~$\Gamma$. Accordingly, we denote the corresponding~$L^2$-norms by~$\|\cdot\|_\Omega$ and~$\|\cdot\|_\Gamma$. 
%
%
%=============================================================================
\subsection{Special case~$\Qh = \Mh = \Vh|_\Gamma$}\label{sect:errAnalysis:KovL}
Assume $\Qh = \calP_1(\calTG)$ with $\calTG = \calTO|_\Gamma$, i.e., the discrete space for $p$ equals $\Mh$, which itself equals~$\Vh$ restricted to the boundary. 
We show that in this special case we regain the discretization proposed in~\cite{KovL17} and thus, may pick up the corresponding convergence results. 
The semi-discrete system reads
\begin{subequations}
\label{eq:semidiscreteKonL}
\begin{align}
	(\dot u_H, v_H)_\Omega + (\dot p_H, q_H)_\Gamma 
	+ \frakk(u_H,v_H) + \fraka(p_H,q_H)
	+ \frakb(q_H-v_H|_\Gamma, \lambda_H)
	&= (f, v_H)_\Omega + (g, q_H)_\Gamma, \label{eq:semidiscreteKonL:a} \\
	\frakb(p_H-u_H|_\Gamma, \mu_H) &= 0 \label{eq:semidiscreteKonL:b}
\end{align}
\end{subequations}
for all test functions~$v_H\in\Vh$, $q_H\in\Qh$, and $\mu_H\in\Mh$. 
A key property in this special case is that~$p_H=u_H|_\Gamma$ along the boundary, i.e., the original coupling condition is satisfied pointwise also for the semi-discrete solution. Recall that this is automatically satisfied in the formulation of~\cite{KovL17}, since there is only a single discrete variable.  
\begin{lemma}
\label{lem_pHequalsuH}
Given meshes $\calTG = \calTO|_\Gamma$ and discrete spaces $\Vh=\calP_1(\calTO)$, $\Qh=\Mh=\calP_1(\calTO|_\Gamma)$, the semi-discrete solution satisfies~$p_H=u_H|_\Gamma$ for all times. 
\end{lemma}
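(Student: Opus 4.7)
The plan is to show that the equality $p_H=u_H|_\Gamma$ is forced by the constraint equation \eqref{eq:semidiscreteKonL:b} alone, independently of the dynamics. The essential observation is that, with the given choices, the restriction of $\Vh$ to $\Gamma$ coincides with $\Qh=\Mh$, so that the constraint can be tested against its own residual.

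First, I would verify that for any $v_H\in\Vh=\calP_1(\calTO)$, the trace $v_H|_\Gamma$ is a continuous piecewise affine function on $\calTO|_\Gamma=\calTG$, hence lies in $\calP_1(\calTG)=\Mh$. In particular, $u_H(t)|_\Gamma\in\Mh$ for each $t\in[0,T]$, and therefore $p_H(t)-u_H(t)|_\Gamma\in\Mh\subseteq L^2(\Gamma)$. Next, I would note that the constraint \eqref{eq:semidiscreteKonL:b} is imposed pointwise (a.e.) in time, which reads
\[
\frakb(p_H(t)-u_H(t)|_\Gamma,\mu_H)=\langle p_H(t)-u_H(t)|_\Gamma,\mu_H\rangle_\Gamma=0
\qquad\text{for all}\ \mu_H\in\Mh.
\]
Since both factors lie in $L^2(\Gamma)$, the duality pairing between $H^{1/2}(\Gamma)$ and $H^{-1/2}(\Gamma)$ collapses to the ordinary $L^2$-inner product.

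Finally, I would choose $\mu_H\coloneqq p_H(t)-u_H(t)|_\Gamma\in\Mh$ as admissible test function. This yields
\[
\|p_H(t)-u_H(t)|_\Gamma\|_\Gamma^2=0,
\]
from which the claim follows pointwise in time. There is no real obstacle here: the argument is purely algebraic and exploits only the matching of the discrete spaces; no use is made of the differential part of \eqref{eq:semidiscreteKonL:a}, of the inf-sup condition, or of any compatibility of initial data. The only subtlety worth a remark is that the identity must hold for all $t$, which is immediate from the pointwise-in-time character of the semi-discrete constraint.
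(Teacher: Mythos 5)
Your proof is correct and takes essentially the same route as the paper: you observe that $p_H-u_H|_\Gamma\in\Mh$ (because traces of $\calP_1(\calTO)$ functions land in $\calP_1(\calTO|_\Gamma)$), plug it into the constraint \eqref{eq:semidiscreteKonL:b} as a test function, and conclude via the $L^2$-inner product that the difference vanishes pointwise in time. The paper's proof is just a more condensed version of exactly this argument.
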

\begin{proof}
Due to the definition of the discrete spaces we have~$p_H-u_H|_\Gamma\in\Mh$.  
Thus, it depicts a valid test function in~\eqref{eq:semidiscreteKonL:b}, leading to 
\[
  \| p_H-u_H|_\Gamma\|^2_{\Gamma}
  = \frakb(p_H-u_H|_\Gamma,p_H-u_H|_\Gamma)
  =0. \qedhere
\]	
\end{proof}
\begin{remark}
The previous result remains true if $\calTO|_\Gamma$ is a refinement of $\calTG$, since this still implies $p_H=u_H|_\Gamma$ for the semi-discrete solution. 
\end{remark}
Lemma~\ref{lem_pHequalsuH} indicates that we may eliminate the variable~$p_H$ from the system, since it contains only redundant information. 
Further, we can eliminate the Lagrange multiplier by considering test functions of the form~$v_H\in\Vh$, $q_H=v_H|_\Gamma\in\Qh$, since this turns~\eqref{eq:semidiscreteKonL:a} into
\[
  (\dot u_H, v_H)_\Omega + (\dot u_H|_\Gamma, v_H|_\Gamma)_\Gamma 
  + \frakk(u_H,v_H) + \fraka(u_H|_\Gamma,v_H|_\Gamma)
  = (f, v_H)_\Omega + (g, v_H|_\Gamma)_\Gamma. 
\]
Note that this is nothing else than the Galerkin discretization given in~\eqref{eq:KonL:semidiscrete}. 
Thus, all error estimates derived in~\cite{KovL17} hold for the considered case. 
For $H^2$-regular solutions this leads to the following result. 
\begin{theorem}[{cf.~\cite[Th.~3.2]{KovL17}}]
\label{thm:KovL:polyhedral}
Consider a polyhedral domain~$\Omega$ with solution~$u\in H^1(0,T;H^2(\Omega))$ satisfying~$p=u|_\Gamma \in H^1(0,T;H^2(\calTO|_\Gamma))$, i.e., $u|_\Gamma$ is piecewise $H^2$ on the boundary.   
Then, there exists a constant $C(u)>0$ such that 
\begin{align*}
  \| u(t) - u_H(t) \|_{\Omega} + \| p(t) - p_H(t) \|_{\Gamma}  
%  &+ \int_0^t \big\| \nabla \big(u(t) - u_H(t) \big) \big\|^2_{\Omega} 
%  + \big\| \nabla_\Gamma \big(p(t) - p_H(t) \big) \big\|^2_{\Gamma}  \dt
  \le C(u)\, H.
\end{align*}
Note, however, that the constant~$C(u)$ depends on the bilinear forms~$\fraka$ and~$\frakk$. In general, this includes a dependence on~$\eps$ with a negative power.
\end{theorem}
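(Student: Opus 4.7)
The plan is essentially to observe that, under the stated choice of discrete spaces, the semi-discrete PDAE collapses to the single-variable Galerkin scheme \eqref{eq:KonL:semidiscrete} analyzed in \cite{KovL17}, so that the theorem follows directly by citing their error estimate. The entire reduction has, in fact, already been carried out in the paragraph preceding the theorem: Lemma~\ref{lem_pHequalsuH} provides~$p_H = u_H|_\Gamma$ pointwise, so $p_H$ can be eliminated from the unknowns, and inserting test functions of the form~$(v_H, v_H|_\Gamma) \in \Vh \times \Qh$ into~\eqref{eq:semidiscreteKonL:a} makes the Lagrange multiplier term~$\frakb(v_H|_\Gamma - v_H|_\Gamma, \lambda_H) = 0$ disappear, leaving precisely~\eqref{eq:KonL:semidiscrete}.

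Once this identification is made, the proof proper is a one-line citation: the assumptions of the theorem are exactly the regularity hypotheses required by \cite[Th.~3.2]{KovL17}, so their~$L^2(\Omega)\oplus L^2(\Gamma)$ first-order estimate applies verbatim to~$u_H$ and, via Lemma~\ref{lem_pHequalsuH}, also to~$p_H$. In particular, no independent energy argument has to be redeveloped for the PDAE variables.

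The only substantive point that needs a separate remark — and indeed the motivation for the rest of the paper — is the dependence of~$C(u)$ on~$\eps$. Tracing through the argument in \cite{KovL17}, the constant is of the form~$C(u) \sim (\|u\|_{H^1(0,T;H^2(\Omega))} + \|p\|_{H^1(0,T;H^2(\calTO|_\Gamma))})$ multiplied by constants depending on~$\fraka$ and~$\frakk$. Because~$p$ solves a surface problem with the oscillatory coefficient~$a_\eps$, the relevant piecewise-$H^2$ norm of~$p$ blows up as~$\eps \to 0$ (as already discussed in the example at the end of Section~\ref{sect:model:regularity}, where~$\|p\|_{H^s(\Gamma)}$ scales like~$\eps^{1-s}$). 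This gives a qualitative, rather than technical, obstacle: the bound is formally~$O(H)$ but is only informative in the asymptotic regime~$H \ll \eps$, which is precisely the situation the LOD construction of Section~\ref{sect:disc:LOD} is designed to avoid. No real difficulty arises in the proof itself; the work is simply in making this~$\eps$-dependence explicit so that it can be contrasted with the~$\eps$-independent estimates derived for the multiscale scheme in the subsequent subsections.
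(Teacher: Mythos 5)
Your proposal is correct and matches the paper's own argument step for step: the paper also appeals to Lemma~\ref{lem_pHequalsuH}, eliminates the multiplier by testing with $(v_H, v_H|_\Gamma)$ to recover the Galerkin scheme~\eqref{eq:KonL:semidiscrete}, and then quotes~\cite[Th.~3.2]{KovL17}. Your accompanying remark on the $\eps$-dependence of $C(u)$ through the $H^2$-type norm of $p$ likewise reproduces the paper's intended caveat.
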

Finally, we would like to mention the convergence result on smooth domains, namely 
\[ 
  \| u(t) - u_H(t) \|_{\Omega} + \| p(t) - p_H(t) \|_{\Gamma} 
  \le C(u)\, H^2
\]
for $u\in H^1(0,T;H^2(\Omega))$ with~$u|_\Gamma \in H^1(0,T;H^2(\Gamma))$, see~\cite[Th.~3.6]{KovL17}. 
Again, this result is based on a Ritz projection, which involves a dependence on the inverse of~$\eps$. This then leads to a pre-asymptotic effect for coarse mesh sizes, cf.~the numerical experiments in Section~\ref{sect:numerics}.
 
Recall that we do not consider smooth domains in this paper but rather Lipschitz domains. Further, we do not assume the solution to be~$H^2$-regular and aim to find approximation results, which do not involve $\eps$-dependencies. 
%
%=============================================================================
\subsection{LOD on the boundary}\label{sect:errAnalysis:LOD}
We now turn to the case of interest, in which the discretization on the boundary is obtained by the LOD as described in Section \ref{sect:disc:LOD}. 
We introduce the space $\Qh = \calP_1(\calT_{\Gamma, H})$ and the LOD space
\[
\tQh
\coloneqq(\operatorname{id}-\calG_m)\,\Qh.
\]
Note that $\tQh$ implicitly depends on the so-called oversampling parameter~$m$.
Furthermore, we have the relation $\Qh= I_H \tQh=\Pi_H \tQh$. More precisely, for any $\tilde{q}_H\in \tQh$, there exists a unique $q_H\in \Qh$ such that $\tilde{q}_H=(\id-\calG_m)q_H$.
Closely inspecting the definition of $\calG_m$, $I_H$, and $\Pi_H$ indeed reveals that $q_H=\Pi_H\tilde{q}_H$.
As already mentioned in Section \ref{sect:disc:LOD}, the coarse mesh on the boundary is given as the restriction of the bulk mesh, i.e., $\calT_{\Gamma, H}=\calTO|_\Gamma$. 

In the Petrov-Galerkin LOD (PG-LOD) approach, we use the ansatz spaces $\Vh$, $\tQh$, and $\Mh$ as above, but the test spaces $\Vh$, $\Qh$, and $\Mh$, i.e., the test functions are not modified in comparison to a classical approach.
This leads to the following variational formulation: Find $u_H\colon [0,T] \to \Vh$, $\tilde{p}_H\colon [0,T] \to \tQh$, and $\lambda_H\colon [0,T] \to \Mh$ such that
\begin{subequations}
\label{eq:semidiscretePGLOD}
\begin{align}
	(\dot u_H, v_H)_\Omega + (\dot{\tilde{p}}_H, q_H)_\Gamma 
	+ \frakk(u_H, v_H) + \fraka(\tilde{p}_H, q_H)
	+ \frakb(q_H- v_H|_\Gamma, \lambda_H)
	&= (f, v_H)_\Omega + (g, q_H)_\Gamma, \label{eq:semidiscretePGLOD:a} \\
	\frakb(\Pi_H\tilde{p}_H-u_H|_\Gamma, \mu_H) &= 0 \label{eq:semidiscretePGLOD:b}
\end{align}
\end{subequations}
for all test functions $v_H\in\Vh$, $q_H\in\Qh$, and $\mu_H\in\Mh$. 
Note that the PG-LOD approach~\eqref{eq:semidiscretePGLOD} is well-posed because of $\dim\tQh=\dim \Qh$.
Similar as in Lemma~\ref{lem_pHequalsuH}, we deduce $\Pi_H\tilde{p}_H=u_H|_\Gamma$ for all times, which allows us to eliminate $\tilde{p}_H$ from the system and moreover to remove the coupling term by considering only test functions $q_H=v_H|_\Gamma$. 
This then leads to the problem of finding~$u_H\colon [0,T] \to \Vh$ and $p_H\colon [0,T] \to \Qh$ such that 
\[
(\dot u_H, v_H)_\Omega + ( (\id-\calG_m) \dot{p}_H, v_H|_\Gamma)_\Gamma 
+ \frakk(u_H,v_H) + \fraka( (\id-\calG_m) p_H, v_H|_\Gamma)
= (f, v_H)_\Omega + (g, v_H|_\Gamma)_\Gamma.  
\]
Note that this is a LOD-variation of the Galerkin discretization~\eqref{eq:KonL:semidiscrete}. In the following, however, we proceed with the analysis of the full Petrov-Galerkin system~\eqref{eq:semidiscretePGLOD}. 
%
%
%=============================================================================
\subsection{Coupled Ritz projection}%\label{sect:errAnalysis:Ritz}
Recall that~$\frakaTilde\colon \Q\times\Q\to\R$ is defined by
\[
  \frakaTilde(p, q)
  \coloneqq \fraka(p ,q) + (\alpha\,p ,q)_\Gamma,  
\]
where $\alpha>0$ is the lower bound on the diffusion coefficient $a_\eps$ such that~$\frakaTilde$ is elliptic on~$\Q$ with constant~$\alpha$.  
With this, we define a Ritz projection of Petrov-Galerkin type for given~$u\in\V$ and~$p\in\Q$. 
More precisely, we seek for $\calRu \coloneqq \calR_H^u(u,p) \in V_H$, $\calRp \coloneqq \calR_H^p(u,p) \in \tQh$, and $\calRl \coloneqq \calR_H^\lambda(u,p) \in \Mh$ such that 
\begin{subequations}
\label{eq:RitzPGLOD}
\begin{align}
	\frakk(\calRu,v_H) + \frakaTilde(\calRp,q_H)
	+ \frakb(q_H-v_H|_\Gamma, \calRl)
	&= \frakk(u, v_H) + \frakaTilde(p, q_H), \label{eq:RitzPGLOD:a} \\
	\frakb(\Pi_H \calRp - \calRu|_\Gamma, \mu_H) &= 0 \label{eq:RitzPGLOD:b}
\end{align}
\end{subequations}
for all test functions $v_H\in\Vh$, $q_H\in \Qh$, and $\mu_H\in\Mh$. 
Before discussing the approximation property of this projection, we need to guarantee the unique solvability of~\eqref{eq:RitzPGLOD}. 
\begin{lemma}
\label{lem:existenceRitz}
Given~$u\in\V$, $p\in\Q$, and~$m$ sufficiently large in the sense that~$\gamma^m\lesssim \alpha/C_{\tilde\fraka}$, system~\eqref{eq:RitzPGLOD} is well-posed, i.e., there exist unique $\calRu\in V_H$, $\calRp\in \tQh$, and $\calRl\in \Mh$. 
\end{lemma}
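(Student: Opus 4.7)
The plan is to reduce the saddle-point system~\eqref{eq:RitzPGLOD} to a single coercive problem in $\calRu$ alone, and then recover $\calRp$ and $\calRl$ separately. For the chosen spaces one has $\Vh|_\Gamma = \Qh = \Mh$, all equal to $\calP_1(\calTO|_\Gamma)$. Consequently, $\Pi_H\calRp - \calRu|_\Gamma$ itself is an admissible test function in~\eqref{eq:RitzPGLOD:b}, which forces $\Pi_H\calRp = \calRu|_\Gamma$ pointwise on $\Gamma$. Combined with the bijectivity of $\Pi_H\colon \tQh \to \Qh$, whose inverse is precisely $\id - \calG_m$, this determines $\calRp = (\id - \calG_m)(\calRu|_\Gamma)$ uniquely in terms of $\calRu$.

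Testing~\eqref{eq:RitzPGLOD:a} with $q_H = v_H|_\Gamma \in \Qh$ cancels the Lagrange multiplier term and yields the reduced equation $\calA(\calRu, v_H) = F(v_H)$ for all $v_H\in\Vh$, where
\[
  \calA(w, v)
  \coloneqq \frakk(w, v) + \frakaTilde\big((\id - \calG_m)(w|_\Gamma), v|_\Gamma\big)
\]
and the right-hand side depends only on the given $u$ and $p$. The main work is to verify coercivity of $\calA$ on $\Vh$. The key observation is that, by definition of $\calG$, the function $(\id - \calG)p_H$ is $\frakaTilde$-orthogonal to $W_h$ for any $p_H\in\Qh$, so with $p_H \coloneqq w|_\Gamma$ one obtains
\[
  \frakaTilde\big((\id - \calG)p_H, p_H\big)
  = \frakaTilde\big((\id - \calG)p_H, (\id - \calG)p_H\big)
  \ge \alpha\, \|(\id - \calG)p_H\|_\Q^2.
\]
Since $\calG p_H \in W_h = \ker \Pi_H$, we have $\Pi_H(\id - \calG)p_H = p_H$, and the $H^1$-stability of $\Pi_H$ (cf.~\cite{BraPS02}) yields the lower bound $\|(\id - \calG)p_H\|_\Q \gtrsim \|p_H\|_\Q$. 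The truncation contribution $\frakaTilde((\calG - \calG_m)p_H, p_H)$ is absorbed using the exponential decay~\eqref{eq:corrector:decay} together with the continuity of $\frakaTilde$, which is precisely where the hypothesis $\gamma^m \lesssim \alpha/C_{\tilde\fraka}$ enters.

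Combined with $\frakk(w, w) = \kappa\, \|\nabla w\|_\Omega^2$ and a Friedrichs-type inequality of the form $\|\nabla w\|_\Omega^2 + \|w|_\Gamma\|_{L^2(\Gamma)}^2 \gtrsim \|w\|_\V^2$, this gives $\calA(w, w) \gtrsim \|w\|_\V^2$ on $\Vh$. Lax--Milgram then produces a unique $\calRu\in\Vh$; setting $\calRp \coloneqq (\id - \calG_m)(\calRu|_\Gamma)\in\tQh$ fulfils~\eqref{eq:RitzPGLOD:b}; and the multiplier $\calRl\in\Mh$ is recovered from~\eqref{eq:RitzPGLOD:a} with $q_H = 0$ and arbitrary $v_H\in\Vh$ via the discrete inf-sup condition of Proposition~\ref{prop:P1P1}. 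The step I expect to require the most care is the coercivity bound: $\frakk$ alone is only semi-coercive on $H^1(\Omega)$, so the boundary term has to supply the missing zeroth-order control through the trace, and the interplay between $\calG_m$ and $\Pi_H$ must be handled quantitatively so that the truncated corrector does not spoil positivity. The decay estimate~\eqref{eq:corrector:decay} is exactly what makes this quantification possible and fixes the threshold on $m$.
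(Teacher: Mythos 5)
Your proof is correct and takes essentially the same route as the paper: the paper recasts \eqref{eq:RitzPGLOD} as an abstract saddle-point problem and proves coercivity of $A_H((u,p),(v,q)) = \frakk(u,v)+\frakaTilde(p,\Pi_H q)$ on $\ker B_H$, using exactly your splitting $\id-\calG_m = (\id-\calG)+(\calG-\calG_m)$, the $\frakaTilde$-orthogonality of $(\id-\calG)p_H$ to $W_h$, the $H^1$-stability of $\Pi_H$ for the norm equivalence, and the decay estimate~\eqref{eq:corrector:decay} to absorb the truncation term. Your static-condensation reformulation into a single coercive problem for $\calRu$ is equivalent to the kernel-coercivity step (the constraint $\Pi_H\calRp = \calRu|_\Gamma$ is precisely $\ker B_H$), and your explicit mention of the Friedrichs inequality merely makes visible a step the paper leaves implicit when passing from $\|\nabla v_H\|_\Omega^2+\|\Pi_H\tilde q_H\|_\Q^2$ to $\|(v_H,\Pi_H\tilde q_H)\|^2_{\calV\times\calQ}$.
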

\begin{proof}
As a first step we rewrite~\eqref{eq:RitzPGLOD} as a standard saddle point problem with identical trial and test space. 
For this, we introduce the bilinear forms 
\[
  A_H((u,p),(v,q)) \coloneqq \frakk(u, v) + \frakaTilde(p, \Pi_H q),\qquad 
  B_H((u,p), \mu) \coloneqq \frakb(\Pi_Hp - u|_\Gamma, \mu). 
\]
The Ritz projection can now be equivalently characterized by 
\begin{align*}
  A_H((\calRu, \calRp),(v_H, \tilde q_H)) + B_H((v_H, \tilde q_H), \calRl)
  &= A_H((u, p), (v_H, \tilde q_H)), \\
  B_H((\calRu, \calRp), \mu_H) 
  &= 0 
\end{align*}
for all $v_H\in\Vh$, $\tilde q_H\in \tQh$, and $\mu_H\in\Mh$. 
Note that this employs the one-to-one relationship of the spaces~$\Qh$ and~$\tQh$. 

The inf-sup-condition of~$B_H$ follows directly from Proposition~\ref{prop:P1P1}.
It remains to show the coercivity of~$A_H$ on~$\ker B_H$. 
For this, we consider~$(v_H, \tilde q_H) \in \Vh\times\tQh$ with $\Pi_H \tilde q_H = v_H|_\Gamma$. 
Employing $\tilde q_H=(\id -\calG_m)\Pi_H \tilde q_H$, we deduce
\begin{align*}
A_H((v_H&,\tilde q_H), (v_H,\tilde q_H)) \\
&= \frakk(v_H, v_H) + \frakaTilde(\tilde q_H, \Pi_H \tilde q_H)\\
&= \frakk(v_H, v_H) + \frakaTilde((\id-\calG_m)\Pi_H\tilde q_H, \Pi_H \tilde q_H)\\
&= \frakk(v_H, v_H) + \frakaTilde((\id-\calG)\Pi_H\tilde q_H, \Pi_H \tilde q_H)+\frakaTilde((\calG-\calG_m)\Pi_H\tilde q_H, \Pi_H \tilde q_H).
\end{align*}
The definition of~$\calG$ and $W_h$ implies  
\begin{align*}
\frakaTilde((\id-\calG)\Pi_H\tilde q_H, \Pi_H \tilde q_H)
= \frakaTilde((\id-\calG)\Pi_H\tilde q_H, (\id-\calG)\Pi_H \tilde q_H).
\end{align*}
Hence, we obtain due to the ellipticity of~$\frakaTilde$ and~\eqref{eq:corrector:decay} that
\begin{align*}
A_H((v_H&, \tilde q_H), (v_H, \tilde q_H)) \\
&= \frakk(v_H, v_H) + \frakaTilde((\id-\calG)\Pi_H\tilde q_H, (\id-\calG)\Pi_H \tilde q_H)+\frakaTilde((\calG-\calG_m)\Pi_H\tilde q_H, \Pi_H \tilde q_H)\\
&\geq \kappa\, \|\nabla v_H\|^2_{\Omega} +\alpha\,\|(\id-\calG)\Pi_H\tilde q_H\|^2_\calQ-C_{\tilde\fraka}\,m^{(d-1)/2}\gamma^m\|\Pi_H \tilde q_H\|^2_\calQ.
\end{align*}
Note that we have the following norm equivalences
\[ 
\|\Pi_H\tilde q_H\|_\calQ= \|\Pi_H(\id-\calG)\Pi_H\tilde q_H\|_\calQ\lesssim \|(\id-\calG)\Pi_H\tilde q_H\|_\calQ
\]
and 
\[
\|\tilde q_H\|_\calQ 
= \|(\id-\calG_m)\Pi_H\tilde q_H\|_\calQ 
\lesssim \|\Pi_H \tilde q_H\|_\calQ. 
\]
With these estimates, $\Pi_H\tilde q_H=v_H|_\Gamma$, and~$\gamma^m\lesssim \alpha/C_{\tilde\fraka}$ we deduce that 
\begin{align*}
A_H((v_H, \tilde q_H), (v_H, \tilde q_H)) &\geq \kappa\, \|\nabla v_H\|^2_{\Omega} +\alpha\,\|(\id-\calG)\Pi_H\tilde q_H\|^2_\calQ-C_{\tilde\fraka}\,m^{(d-1)/2}\gamma^m\|\Pi_H \tilde q_H\|^2_\calQ\\
&\gtrsim \|(v_H, \Pi_H \tilde q_H)\|^2_{\calV\times \calQ} \\
&\gtrsim \|(v_H, \tilde q_H)\|^2_{\calV\times \calQ}.\qedhere
\end{align*}
\end{proof}
We now need to analyze the approximation properties of the Ritz projection. 
\begin{proposition}
\label{prop:errorRitz}
Given~$u\in \calV$ and $p\in \calQ$ with $u|_\Gamma = p$, the coupled Ritz projection defined in~\eqref{eq:RitzPGLOD} satisfies for sufficiently large~$m$ (i.e., $\gamma^m\lesssim \alpha/C_{\tilde\fraka}$) the estimate 
\[
  \|u-\calRu\|_\calV + \|p-\calRp \|_\calQ
  \ \lesssim\ \inf_{v_H\in\Vh} \|u-v_H\|_\calV + \inf_{\tilde q_H\in \tQh} \|p - \tilde q_H\|_\calQ.
\]
\end{proposition}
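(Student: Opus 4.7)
My plan is to combine the saddle-point structure of \eqref{eq:RitzPGLOD} established in Lemma~\ref{lem:existenceRitz} with two ingredients: a Galerkin-orthogonality observation on the subspace where the discrete coupling already holds, and a Fortin-type modification which allows the two infima in the statement to be taken independently over $\Vh$ and $\tQh$.

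\textbf{Steps 1--2 (Idempotence and stability).} First, for any pair $(v_H, \tilde q_H) \in \Vh\times\tQh$ satisfying the discrete coupling $v_H|_\Gamma = \Pi_H \tilde q_H$, the triple $(v_H, \tilde q_H, 0)$ is easily checked to solve \eqref{eq:RitzPGLOD} when $(v_H, \tilde q_H)$ is used as data in the right-hand side; uniqueness from Lemma~\ref{lem:existenceRitz} identifies it with the Ritz projection of that pair. By linearity of the Ritz map this yields the Galerkin-type orthogonality $(\calRu - v_H,\, \calRp - \tilde q_H,\, \calRl) = \calR_H(u-v_H,\, p-\tilde q_H)$. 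Second, the coercivity of $A_H$ on $\ker B_H$ together with the inf-sup stability of $B_H$ supplied by Lemma~\ref{lem:existenceRitz} allow to invoke the Brezzi framework, yielding the continuity bound $\|\calR^u_H(w, r)\|_\V + \|\calR^p_H(w, r)\|_\Q \lesssim \|w\|_\V + \|r\|_\Q$. Combined with the orthogonality and the triangle inequality, one obtains the conditional quasi-best estimate
\[
\|u - \calRu\|_\V + \|p - \calRp\|_\Q
\ \lesssim\ \|u - v_H\|_\V + \|p - \tilde q_H\|_\Q
\]
for every coupling-compatible pair $(v_H, \tilde q_H)$.

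\textbf{Step 3 (Fortin-type correction; main obstacle).} The last and most delicate step is to replace the constrained infimum above by the two independent best approximations claimed in the proposition. Given arbitrary $v_H^\star \in \Vh$ and $\tilde q_H^\star \in \tQh$, I would use the discrete extension operator $E_H^0$ from \cite[Lem.~3.1]{HipJM15} already employed in the proof of Proposition~\ref{prop:P1P1}, setting $v_H := v_H^\star - E_H^0(v_H^\star|_\Gamma - \Pi_H \tilde q_H^\star)$ and $\tilde q_H := \tilde q_H^\star$ to obtain a coupling-compatible pair. Decomposing the mismatch as $v_H^\star|_\Gamma - \Pi_H \tilde q_H^\star = (v_H^\star|_\Gamma - u|_\Gamma) + \Pi_H(p - \tilde q_H^\star) + (p - \Pi_H p)$, the first two contributions are handled using $u|_\Gamma = p$, the trace inequality, the boundedness $E_H^0 \colon H^{1/2}(\Gamma)\to H^1(\Omega)$, and the $H^{1/2}(\Gamma)$-stability of the $L^2$-projection $\Pi_H$. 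The crux is the bound of $\|p - \Pi_H p\|_{H^{1/2}(\Gamma)}$ by $\inf_{\tilde q_H \in \tQh} \|p - \tilde q_H\|_\Q$: here one must exploit the intimate connection between $\Pi_H$, the Clément-type interpolation $I_H$, and the decomposition $\calP_1(\calT_{\Gamma,h}) = W_h \oplus \Qh$, together with the equivalence $\|\Pi_H \tilde q_H\|_\Q \simeq \|\tilde q_H\|_\Q$ that is specific to the LOD construction and ensures that approximation in $\tQh$ is equivalent to approximation in $\Qh$ via $\Pi_H$.
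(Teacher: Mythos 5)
Your Steps 1--2 mirror the paper's argument (coercivity of $A_H$ on $\ker B_H$, Galerkin orthogonality, triangle inequality), and the paper then simply cites \cite[Prop.~II.2.5]{BreF91} for the passage from the constrained to the unconstrained infimum. Your Step~3 attempts a hands-on version of that passage, and this is where a genuine gap sits.

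The decomposition you propose,
\[
v_H^\star|_\Gamma - \Pi_H \tilde q_H^\star
  = (v_H^\star|_\Gamma - u|_\Gamma) + \Pi_H(p - \tilde q_H^\star) + (p - \Pi_H p),
\]
is algebraically valid but the wrong one: the term $\|p-\Pi_H p\|_{H^{1/2}(\Gamma)}$ is generically of order $H^{1/2}\|p\|_{H^1(\Gamma)}$, whereas $\inf_{\tilde q_H\in\tQh}\|p-\tilde q_H\|_\calQ$ can be $\calO(H)$ (for $p\approx p_h$). No appeal to the $L^2$-orthogonality $\Qh\perp W_h$ or to $\|\Pi_H\tilde q_H\|_\calQ\simeq\|\tilde q_H\|_\calQ$ can close that gap; the term you flag as the ``crux'' is simply not dominated by the claimed best-approximation quantities.

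The underlying reason the statement is still true is the one exploited (implicitly) by \cite[Prop.~II.2.5]{BreF91}: the exact pair $(u,p)$ satisfies the \emph{discrete} constraint $B_H((u,p),\mu_H)=\frakb(\Pi_H p - p,\mu_H)=0$ for all $\mu_H\in\Mh$, because $\Mh=\Qh$ and $\Pi_H$ is the $L^2(\Gamma)$-projection onto $\Qh$. In the hands-on version this amounts to rewriting the mismatch differently: since $v_H^\star|_\Gamma\in\Qh$ (hence $\Pi_H v_H^\star|_\Gamma=v_H^\star|_\Gamma$) and $\Pi_H u|_\Gamma=\Pi_H p$, one has
\[
v_H^\star|_\Gamma - \Pi_H\tilde q_H^\star
  = \Pi_H\bigl[(v_H^\star - u)|_\Gamma - (\tilde q_H^\star - p)\bigr],
\]
so that only the $H^{1/2}$-stability of $\Pi_H$, the trace inequality, and the boundedness of $E_H^0$ are needed, without any $p-\Pi_H p$ contribution. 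Replace your decomposition by this identity and the Fortin construction goes through; as written, Step~3 is incorrect.
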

\begin{proof}
The idea of the proof is to use the reformulation of~\eqref{eq:RitzPGLOD} as in the proof of Lemma~\ref{lem:existenceRitz} and to apply the techniques presented in~\cite[Ch.~II.2]{BreF91}. 
By the definition of the Ritz projection we have for $(v_H,\tilde q_H) \in \ker B_H$ that  
\[
  A_H((u-\calRu, p-\calRp),(v_H, \tilde q_H)) 
  = B_H((v_H, \tilde q_H), \calRl)
  = 0. 
\]
With the coercivity of $A_H$, which was shown in the proof of Lemma~\ref{lem:existenceRitz}, we obtain for arbitrary $w_H\in\Vh$, $\tilde r_H\in\tQh$ with $w_H|_\Gamma=\Pi_H\tilde r_H$ the estimate 
\begin{align*}
  \|(w_H - \calRu, \tilde r_H - \calRp)\|_{\V\times\Q}
  &\ \lesssim\ \sup_{(v_H, \tilde q_H) \in \ker B_H} \frac{ A_H((w_H - \calRu, \tilde r_H - \calRp), (v_H, \tilde q_H))}{\|(v_H, \tilde q_H)\|_{\V\times\Q}} \\
  &\ =\ \sup_{(v_H, \tilde q_H) \in \ker B_H} \frac{ A_H((w_H - u, \tilde r_H - p), (v_H, \tilde q_H))}{\|(v_H, \tilde q_H)\|_{\V\times\Q}} \\
  &\ \lesssim\ \|(u - w_H, p - \tilde r_H)\|_{\V\times\Q}.
\end{align*}
Thus, by triangle inequality it holds that  
\[
  \|(u - \calRu, p - \calRp)\|_{\V\times\Q}
  \ \lesssim\ \inf_{(w_H, \tilde r_H) \in \ker B_H} \|(u - w_H, p - \tilde r_H)\|_{\V\times\Q}.
\]
Following~\cite[Prop.~II.2.5]{BreF91}, we conclude with the inf-sup property of~$B_H$ that 
\[
  \inf_{(w_H, \tilde r_H) \in \ker B_H} \|(u - w_H, p - \tilde r_H)\|_{\V\times\Q}
  \ \lesssim\ \inf_{w_H \in\Vh, \tilde r_H \in \tQh} \|(u - w_H, p - \tilde r_H)\|_{\V\times\Q},
\]
which provides the stated decoupled estimate.  
\end{proof}
For the two previous results we had to assume that the localization parameter~$m$ is sufficiently large compared to the contrast of the diffusion coefficient~$a_\eps$. 
To ensure the full convergence order later on, we will also need to assume that~$m$ is sufficiently large compared to the (coarse) mesh size~$H_\Gamma$. We summarize this in the following assumption. 
\begin{assumption}[localization parameter]
\label{ass:m}
We assume that $m$ is sufficiently large in the sense that~$\gamma^m\lesssim \alpha/C_{\tilde\fraka}$ and~$m\gtrsim |\log H|$. 
\end{assumption}
In the following, we show a priori estimates for the coupled Ritz projections based on the quasi-optimality of Proposition~\ref{prop:errorRitz}. 
For a precise formulation, we need some further notation.
Recall that the LOD on the boundary in Section~\ref{sect:disc:LOD} utilized a fine-scale mesh~$\calT_{\Gamma, h}$ with associated finite element space $\calP_1(\calT_{\Gamma, h})$ for the definition of the correctors. 
Further assume that~$\calT_{\Gamma, h}$ is the restriction of a volume mesh $\calT_{\Omega, h}$ and denote by $(u_h, p_h)$ the finite element solution corresponding to~\eqref{eq:PDAE:bilinear} on $\calP_1(\calT_{\Omega, h})\times \calP_1(\calT_{\Gamma, h})$.
This solution is never computed in practice and only serves as a reference solution.
We assume~$h$ to be sufficiently small so that~$p_h$ is a good approximation of $p$, i.e., the error~$\|p-p_h\|_\calQ$ is sufficiently small. Note that the error of the fine-scale discretization can be estimated with the help of Theorem~\ref{thm:KovL:polyhedral}.
Furthermore, we introduce the discrete operator $\widetilde{\calA}_{\eps, h}\colon  \calP_1(\calT_{\Gamma, h})\to \calP_1(\calT_{\Gamma, h})$ via
\begin{equation}\label{eq:Aepsh}
(\widetilde{\calA}_{\eps, h} p_h, q_h)_\Gamma = \frakaTilde(p_h, q_h)
\qquad \text{for all}\quad p_h, q_h\in  \calP_1(\calT_{\Gamma, h}).
\end{equation}
Thus, the operator~$\widetilde{\calA}_{\eps, h}$ is the $L^2$-representative of~$\frakaTilde$, restricted to the fine-scale finite element space. 
\begin{corollary}\label{cor:ritz}
Consider~$u\in H^s(\Omega) \subseteq \calV$, $1\le s\le 2$, and $p\in\calQ$ with $u|_\Gamma=p$. Further, let~$m$ satisfy Assumption~\ref{ass:m}. 
Then, we have that
\[
  \|u-\calRu\|_\calV + \|p-\calRp \|_\calQ 
  \ \lesssim\ H^{s-1}|u|_{H^s(\Omega)}  + H\,\|\widetilde{\calA}_{\eps, h}\, p_h\|_{\Gamma} + \|p-p_h\|_\calQ.
\]
\end{corollary}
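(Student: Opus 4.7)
The plan is to combine the abstract quasi-optimality of the coupled Ritz projection established in Proposition~\ref{prop:errorRitz} with separate approximation estimates for the two infima. Starting from
\[
  \|u-\calRu\|_\calV + \|p-\calRp\|_\calQ
  \ \lesssim\ \inf_{v_H\in\Vh} \|u-v_H\|_\calV + \inf_{\tilde q_H\in\tQh} \|p - \tilde q_H\|_\calQ,
\]
the first infimum is controlled by standard finite element interpolation on $\Vh = \calP_1(\calTO)$: choosing, e.g., a Scott--Zhang or Cl{\'e}ment quasi-interpolant of $u\in H^s(\Omega)$ immediately yields a bound by $H^{s-1}|u|_{H^s(\Omega)}$ for $1\le s\le 2$.

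For the second infimum, the natural candidate is the LOD quasi-interpolant $\tilde q_H \coloneqq (\id-\calG_m)\, I_H p_h$, and I would then split
\[
  \|p-\tilde q_H\|_\calQ \le \|p-p_h\|_\calQ + \|p_h - \tilde q_H\|_\calQ.
\]
The first summand appears directly on the right-hand side of the claim. For the second, I first analyze the ideal corrector: set $\tilde q_H^{\mathrm{id}} \coloneqq (\id-\calG)\, I_H p_h$ and observe that $p_h - \tilde q_H^{\mathrm{id}} = (p_h - I_H p_h) + \calG I_H p_h \in W_h$, using $W_h = \ker I_H$ and $\calG I_H p_h \in W_h$. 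For any $w\in W_h$, combining the definition~\eqref{eq:corrector} of $\calG$ with the $L^2$-orthogonality~\eqref{eq:IH:L2ortho1} (so that $\frakaTilde(p_h - I_H p_h, w)+\frakaTilde(\calG I_H p_h, w) = \frakaTilde(p_h, w)$) and the definition~\eqref{eq:Aepsh} of $\widetilde{\calA}_{\eps,h}$ gives
\[
  \frakaTilde(p_h - \tilde q_H^{\mathrm{id}}, w) = (\widetilde{\calA}_{\eps,h}\, p_h, w)_\Gamma.
\]
Since $w\in W_h$ satisfies $w = w - I_H w$, the stability/approximation property~\eqref{eq:IH:stabapprox} yields $\|w\|_\Gamma \lesssim H\, \|w\|_\calQ$. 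Testing with $w = p_h - \tilde q_H^{\mathrm{id}}$ and exploiting the $\alpha$-ellipticity of $\frakaTilde$, I obtain
\[
  \|p_h - \tilde q_H^{\mathrm{id}}\|_\calQ \ \lesssim\ H\, \|\widetilde{\calA}_{\eps,h}\, p_h\|_\Gamma.
\]

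The remaining piece is the localization error $\|\tilde q_H^{\mathrm{id}} - \tilde q_H\|_\calQ = \|(\calG-\calG_m) I_H p_h\|_\calQ$. By the exponential decay~\eqref{eq:corrector:decay} and stability of $I_H$ this is controlled by $m^{(d-1)/2}\gamma^m \|p_h\|_\calQ$, and ellipticity of $\frakaTilde$ gives $\|p_h\|_\calQ \lesssim \|\widetilde{\calA}_{\eps,h}\, p_h\|_\Gamma$. Under Assumption~\ref{ass:m}, the factor $m^{(d-1)/2}\gamma^m$ can be absorbed into $H$, so the localization contribution is dominated by the ideal-case bound. Collecting the three estimates completes the proof.

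The main obstacle is the estimate of the ideal LOD term, which requires the correct exploitation of the kernel property of $I_H$, the $L^2$-orthogonality of the splitting, and the identification of $\frakaTilde(p_h,\cdot)$ with $\widetilde{\calA}_{\eps,h}\, p_h$ to extract an extra factor $H$ via the approximation property of $I_H$; once this ideal estimate is in place, the localization step and the handling of the reference error are routine applications of~\eqref{eq:corrector:decay} and a triangle inequality.
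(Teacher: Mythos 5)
Your overall strategy is the same as the paper's: invoke the quasi-optimality of Proposition~\ref{prop:errorRitz}, control the $u$-infimum by standard interpolation, split the $p$-infimum with the triangle inequality through $p_h$, and then bound $\inf_{\tilde q_H\in\tQh}\|p_h-\tilde q_H\|_\calQ$ by the ideal LOD estimate plus the localization error (which is what the paper delegates to the references \cite{MalP18,ElfGH15}). You spell out that last step, which is a legitimate and useful expansion. However, the way you implement it contains a genuine error.

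The problem is your choice of quasi-interpolant, $\tilde q_H \coloneqq (\id-\calG_m)\,I_H p_h$ with ideal counterpart $\tilde q_H^{\mathrm{id}} = (\id-\calG)\,I_H p_h$. You claim $p_h - \tilde q_H^{\mathrm{id}} = (p_h - I_H p_h) + \calG I_H p_h \in W_h$, using $W_h = \ker I_H$. But the Cl\'ement-type operator $I_H$ from \cite{CarV99} is \emph{not} idempotent: writing out the definition, $I_H\phi_{z_0} = \sum_z \frac{(\phi_{z_0},\phi_z)_\Gamma}{(1,\phi_z)_\Gamma}\phi_z \neq \phi_{z_0}$, so $I_H$ does not restrict to the identity on $\calP_1(\calT_{\Gamma,H})$. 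Hence $I_H(p_h - I_H p_h) = I_H p_h - I_H^2 p_h \neq 0$ in general, i.e.\ $p_h - I_H p_h \notin W_h$. Equivalently, using the paper's identity $W_h = \ker(\Pi_H|_{\calP_1(\calT_{\Gamma,h})})$, one needs $\Pi_H(p_h - I_H p_h)=0$, i.e.\ $\Pi_H p_h = I_H p_h$, which fails. Your variational identity $\frakaTilde(p_h - \tilde q_H^{\mathrm{id}}, w) = (\widetilde{\calA}_{\eps,h}p_h, w)_\Gamma$ is still correct for all $w\in W_h$ (it only uses the definition of $\calG$), but the crucial subsequent step of \emph{testing with} $w = p_h - \tilde q_H^{\mathrm{id}}$ is no longer legal, and without that you cannot extract the coercivity and Poincar\'e gain that give the factor~$H$.

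The fix is precisely the paper's choice $\tilde q_H = (\id-\calG_m)\Pi_H p_h$. Since $\Pi_H$ is the $L^2$-projection and therefore idempotent, $p_h - \Pi_H p_h \in \ker(\Pi_H|_{\calP_1(\calT_{\Gamma,h})}) = W_h$, and thus $p_h - (\id-\calG)\Pi_H p_h = (p_h - \Pi_H p_h) + \calG\Pi_H p_h \in W_h$ as required. With this replacement, the rest of your argument — the Galerkin orthogonality on $W_h$ via the corrector equation, the estimate $\|w\|_\Gamma\lesssim H\|w\|_\calQ$ from~\eqref{eq:IH:stabapprox} applied to $w\in W_h=\ker I_H$, the $\alpha$-ellipticity of $\frakaTilde$, the localization bound via~\eqref{eq:corrector:decay} and stability $\|p_h\|_\calQ\lesssim\|\widetilde{\calA}_{\eps,h}p_h\|_\Gamma$, and the absorption of $m^{(d-1)/2}\gamma^m$ under Assumption~\ref{ass:m} — is correct and is exactly the ``standard LOD manner'' the paper refers to. Note that $\Pi_H p_h$ is in fact the \emph{unique} element $q_H\in\Qh$ for which $p_h-(\id-\calG)q_H\in W_h$, so $I_H p_h$ genuinely cannot be used here.
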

\begin{proof}
Due to Proposition~\ref{prop:errorRitz}, we only need to estimate the best approximation errors $\inf_{v_H\in\Vh} \|u-v_H\|_\calV$  and $\inf_{q_H\in \tQh} \|p-q_H\|_\calQ$.
The error for $u$ follows by standard interpolation estimates.
For the error in $p$, we use the triangle inequality and obtain
\[
  \inf_{\tilde q_H\in \tQh} \|p-\tilde q_H\|_\calQ
  \leq \|p-p_h\|_\calQ + \inf_{\tilde q_H\in \tQh} \|p_h- \tilde q_H\|_\calQ.
\]
Choosing $\tilde q_H=(\id-\calG_m)\Pi_H p_h$, the last term is estimated in a standard LOD manner, see, e.g., \cite{MalP18,ElfGH15}. 
%For ourselves: $\|p_h-q_H\|\leq \|p_h-(\id-\calG)\Pi_H p_h\|+\|(\calG-\calG_m)\Pi_H p_h\|$, last term with exponential decay (and $H^1$-norm equiv to $\tilde A_{\eps,h}$ in $L^2$-norm); first term: $\|p_h-(\id-\calG)\Pi_H p_H\|^2\lesssim \frakaTilde(p_h-(\id-\calG)\Pi_H p_h, p_h-(\id-\calG)\Pi_H p_h)=\frakaTilde(p_h, p_h-(\id-\calG)\Pi_H p_h)$ (def of $\calG$), now use def of $\tilde \calA_{\eps, h}$ and that $p_h-(\id-\calG)\Pi_H p_h\in W_h$
\end{proof}

Neglecting the fine-scale discretization error (i.e., choosing~$h\ll \eps$), Corollary~\ref{cor:ritz} can be summarized as
\begin{equation}\label{eq:ritz:energy}
\|u-\calRu\|_\calV + \|p-\calRp \|_\calQ 
  \ \lesssim\ H^{s-1} + H. % (\|f\|_\Omega + \|g\|_\Gamma).
\end{equation}
%
%
%=============================================================================
\subsection{$L^2$-estimates}
To show corresponding $L^2$-estimates for the Ritz projections, we consider the following auxiliary problem: Seek~$w\in \calV$, $r\in \calQ$, and~$\lambda^z\in \calM$ such that
\begin{subequations}
\label{eq:ritz:dual}
\begin{align}
	\frakk(w,v) + \frakaTilde(r,q)
	+ \frakb(q-v|_\Gamma, \lambda^z)
	&=(u-\calRu, v)_\Omega + (p-\Pi_H\calRp, q)_\Gamma,\\
	\frakb(r - w|_\Gamma, \mu) &= 0
\end{align}
\end{subequations}
for all $v\in\calV$, $q\in\calQ$, and $\mu\in \calM$.
Note that this is similar to the stationary part of~\eqref{eq:PDAE:bilinear} with $\fraka$ replaced by $\frakaTilde$ and adjusted source terms on the right-hand side.
Hence, we can expect the same spatial regularity for~$(w,r)$ to hold, since $u-\calRu\in L^2(\Omega)$ and $p-\Pi_H\calRp\in L^2(\Gamma)$.
Furthermore, we introduce 
\[
\wcba(r, \Qh):=\inf_{r_H\in \Qh}\frac{\|r-r_H\|_\calQ}{\|u-\calRu\|_\Omega+\|\Pi_H(p-\calRp)\|_\Gamma}
\]
and
\[
 \wcba(p, \Qh):=\inf_{q_H\in \Qh} \|p-q_H\|_\calQ.
% :=\inf_{q_H\in \Qh}\frac{\|p-q_H\|_\calQ}{\|f\|_\Omega+\|g\|_\Gamma}.
\]
These {\em worst case best-approximation} errors of $\Qh$ for $r$ and $p$ with respect to the energy norm are bounded independently of $\eps$ without any further regularity assumptions and are of order $H^{\sigma-1}$ for sufficiently regular~$r$ and $p$ in~$H^\sigma(\Gamma)$ for $1\leq \sigma\leq2$.
We emphasize, however, that exploiting higher regularity to estimate $\wcba$ may introduce an $\eps$-dependency.
For readability, we will omit the fine-scale discretization error in the following proposition.
\begin{proposition}
\label{prop:ritz:L2}
Let~$u\in H^s(\Omega)\subseteq \calV$, $1\le s\le 2$, and $p\in \calQ$ be given with $u|_\Gamma=p$. Further assume that the unique solution $(w,r)\in \calV\times\calQ$ to~\eqref{eq:ritz:dual} satisfies $w\in H^s(\Omega)$. 
Then we have with Assumption~\ref{ass:m} that
\begin{equation}\label{eq:ritz:L2-1}
\begin{aligned}
\|u-\calRu\|_\Omega + \|p-\Pi_H\calRp\|_\Gamma
&\lesssim (H^{s-1}+\wcba(r,\Qh) + H)\, \|(u-\calRu, p-\calRp)\|_{\calV\times\calQ}\\
&\qquad + H\, \wcba(p,\Qh) + H^2\, (\|p\|_\calQ + \|\widetilde{\calA}_{\eps, h}p_h\|_\Gamma).
\end{aligned}
\end{equation}
\end{proposition}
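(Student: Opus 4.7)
I propose a duality argument adapted to the Petrov--Galerkin LOD structure on the boundary. Testing~\eqref{eq:ritz:dual} with $v = u-\calRu$ and $q = p-\Pi_H\calRp$, the $\frakb$-term vanishes because $u|_\Gamma = p$ together with the Ritz constraint~\eqref{eq:RitzPGLOD:b} (which yields $\Pi_H\calRp = \calRu|_\Gamma$) force $(p-\Pi_H\calRp) - (u-\calRu)|_\Gamma = 0$. This produces the identity
\[
\|u-\calRu\|_\Omega^2 + \|p-\Pi_H\calRp\|_\Gamma^2 = \frakk(w, u-\calRu) + \frakaTilde(r, p-\Pi_H\calRp).
\]

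\textbf{Galerkin orthogonality and decomposition.} For any $w_H \in \Vh$, the pair $(w_H, w_H|_\Gamma)$ is admissible in~\eqref{eq:RitzPGLOD:a} (since $w_H|_\Gamma \in \Qh$ by the assumption $\calT_{\Gamma, H} = \calTO|_\Gamma$), and its coupling term cancels, yielding $\frakk(w_H, u-\calRu) + \frakaTilde(w_H|_\Gamma, p-\calRp) = 0$. Combined with $\calRp - \Pi_H\calRp = -\calG_m\Pi_H\calRp$, subtracting this from the duality identity rearranges the right-hand side as
\[
\underbrace{\frakk(w-w_H, u-\calRu)}_{(\mathrm{I})} + \underbrace{\frakaTilde(r-w_H|_\Gamma, p-\calRp)}_{(\mathrm{II})} - \underbrace{\frakaTilde(r, \calG_m\Pi_H\calRp)}_{(\mathrm{III})}.
\]
I would select $w_H$ as a Scott--Zhang-type interpolant of $w$ whose trace $w_H|_\Gamma$ realizes a quasi-optimal $\Qh$-approximation of $r = w|_\Gamma$ in the $\Q$-norm.

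\textbf{Bounding the three terms.} Term $(\mathrm{I})$ is estimated by Cauchy--Schwarz, the standard interpolation bound $\|w-w_H\|_\V \lesssim H^{s-1}\|w\|_{H^s(\Omega)}$, and the dual regularity $\|w\|_{H^s(\Omega)} \lesssim \|u-\calRu\|_\Omega + \|p-\Pi_H\calRp\|_\Gamma$, which jointly yield the $H^{s-1}\|(u-\calRu, p-\calRp)\|_{\V\times\Q}$ factor. Term $(\mathrm{II})$ is bounded by $\|r-w_H|_\Gamma\|_\Q \|p-\calRp\|_\Q$, and by the construction of $w_H|_\Gamma$ and the definition of $\wcba(r,\Qh)$ this contributes $\wcba(r,\Qh) \|(u-\calRu, p-\calRp)\|_{\V\times\Q}$. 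The main obstacle is Term $(\mathrm{III})$: since $\calG_m\Pi_H\calRp \in W_h = \ker\Pi_H$, I would exploit the $L^2$-orthogonality~\eqref{eq:IH:L2ortho1} to subtract $\Pi_H$-projections freely, combined with the interpolation-type estimate $\|\calG_m\Pi_H\calRp\|_\Gamma \lesssim H\,\|\Pi_H\calRp\|_\Q$ (deduced from~\eqref{eq:IH:stabapprox} with $I_H\calG_m\Pi_H\calRp = 0$). Approximating $r$ by its fine-scale counterpart $r_h$, rewriting $\frakaTilde(r_h, \cdot) = (\widetilde{\calA}_{\eps,h} r_h, \cdot)_\Gamma$ via~\eqref{eq:Aepsh}, and then a symmetric step using $\calRp = (\id-\calG_m)\Pi_H\calRp$ to bring in the fine-scale primal solution $p_h$ and the operator $\widetilde{\calA}_{\eps,h} p_h$ produces the remaining $H\|(u-\calRu, p-\calRp)\|_{\V\times\Q}$, $H\,\wcba(p,\Qh)$, and $H^2(\|p\|_\Q + \|\widetilde{\calA}_{\eps,h} p_h\|_\Gamma)$ pieces. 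Absorbing the quadratic left-hand side via $a^2 + b^2 \ge \tfrac{1}{2}(a+b)^2$ finally yields~\eqref{eq:ritz:L2-1}.
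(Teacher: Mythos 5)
Your overall structure is the right one and coincides with the paper's: test the dual problem with $(v,q) = (u-\calRu, p-\Pi_H\calRp)$, observe that the constraint forces $q = v|_\Gamma$ so the $\frakb$-term drops, use the Petrov--Galerkin orthogonality of the Ritz projection, and arrive at the three-term decomposition $(\mathrm{I})+(\mathrm{II})+(\mathrm{III})$. Your handling of $(\mathrm{I})$ and $(\mathrm{II})$ is sound. The paper invokes the inf-sup stability of $B_H$ and~\cite[Prop.~II.2.5]{BreF91} to pass from a \emph{constrained} infimum over pairs $(w_H,r_H)$ with $w_H|_\Gamma = r_H$ to unconstrained infima; you instead propose to exhibit a single $w_H$ whose trace is already quasi-optimal for $r$. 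Either route works, provided the Scott--Zhang-type interpolant has the required simultaneous $\calV$- and trace-$\calQ$-quasi-optimality (this is available since $\Qh = \Vh|_\Gamma$ here).

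The gap is in term $(\mathrm{III})$, and it is not a cosmetic one. Your estimate $\|\calG_m\Pi_H\calRp\|_\Gamma \lesssim H\,\|\Pi_H\calRp\|_\calQ$ is correct and extracts \emph{one} factor of $H$. After pairing with $\widetilde{\calA}_{\eps,h}r_h$ and bounding $\|\widetilde{\calA}_{\eps,h}r_h\|_\Gamma$ by the dual data $X \coloneqq \|u-\calRu\|_\Omega + \|p-\Pi_H\calRp\|_\Gamma$, and splitting $\|\Pi_H\calRp\|_\calQ \lesssim \|p-\calRp\|_\calQ + \|p\|_\calQ$, you obtain a contribution of the form $H\, X\, \|p\|_\calQ$; dividing by $X$ yields $H\,\|p\|_\calQ$, not the claimed $H^2\,\|p\|_\calQ$, so an entire power of $H$ is lost on the dominant term. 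The "symmetric step" you allude to does not manufacture the missing factor. The paper's mechanism is different in a structurally essential way: instead of estimating $\|\calG_m\Pi_H\calRp\|_\Gamma$ in $L^2$, it decomposes $-\calG_m\Pi_H\calRp = \calG_m\Pi_H(p-\calRp) + (\calG-\calG_m)\Pi_H p - \calG\Pi_H p$ and, because each summand lies in $W_h$ and $(\id-\calG)\Pi_H r$ is $\frakaTilde$-orthogonal to $W_h$ by definition of $\calG$, replaces $r$ by $r - (\id-\calG)\Pi_H r$ in the pairing. That replacement is what produces the $H$-factor from the LOD error estimate $\|r-(\id-\calG)\Pi_H r\|_\calQ \lesssim \|r-r_h\|_\calQ + H\,\|\widetilde{\calA}_{\eps,h}r_h\|_\Gamma \lesssim H\,X$; the \emph{second} $H$ then comes from the exponential decay $\|(\calG-\calG_m)\Pi_H p\|_\calQ \lesssim m^{(d-1)/2}\gamma^m\|p\|_\calQ \lesssim H\|p\|_\calQ$ under Assumption~\ref{ass:m} and from $\|\calG\Pi_H p\|_\calQ \lesssim \wcba(p,\Qh) + H\,\|\widetilde{\calA}_{\eps,h}p_h\|_\Gamma$. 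Without exploiting this $\frakaTilde$-orthogonality against $W_h$, your argument caps out at a rate one power of $H$ below what the proposition asserts.
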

Before proving Proposition~\ref{prop:ritz:L2}, we discuss the obtainable rates for the Ritz projection in $L^2$.
\begin{remark}\label{rem:ritzL2}
As mentioned before, the worst case best-approximation errors are of order $H^{\sigma-1}$ if $r$ and $p$ are in $H^{\sigma}(\Gamma)$, respectively.
Combining Proposition~\ref{prop:ritz:L2} and Corollary~\ref{cor:ritz}, we can thus summarize that
\begin{equation}\label{eq:ritz:L2-short}
\|u-\calRu\|_\Omega + \|p-\Pi_H\calRp\|_\Gamma\lesssim H^{2(s-1)}+H^{s+\sigma-2}+H^s+H^\sigma+H^2.
\end{equation}
For optimal regularity $\sigma=s=2$, we obtain in agreement with~\cite{KovL17} the expected quadratic rate for the Ritz error. Note that in order to have this optimal regularity, $\Omega$ needs to be convex.
In the worst case $\sigma=1$ (see Section~\ref{sect:model:regularity}), estimate~\eqref{eq:ritz:L2-short} results in $H^{s-1}$ as dominant term, which is comparable to the energy norm estimate in Corollary~\ref{cor:ritz}.
Note that this rate seems rather pessimistic, but is explained by the low regularity of $p$ for general coefficients $a_\eps$. 
Finally we observe that if $s=\sigma+1/2$, we obtain the rate $H^{2(s-5/4)}$ from~\eqref{eq:ritz:L2-short}, which is better than $H^{s-1}$ only if $s\geq 3/2$. 
\end{remark}
\begin{proof}[Proof of Proposition~\ref{prop:ritz:L2}]
Inserting $v=u-\calRu$ and $q=p-\Pi_H\calRp$ into~\eqref{eq:ritz:dual} and observing that $q=v|_\Gamma$, we obtain
\begin{align*}
&\|u-\calRu\|_\Omega^2+\|p-\Pi_H\calRp\|_\Gamma^2\\
&\qquad=\frakk(u-\calRu, w)+\frakaTilde(p-\Pi_H\calRp, r)\\
&\qquad=\frakk(u-\calRu, w)+\frakaTilde(p-\calRp, r)+\frakaTilde(\calRp-\Pi_H\calRp, r)\\
&\qquad=\frakk(u-\calRu, w-w_H)+\frakaTilde(p-\calRp, r-r_H)+\frakaTilde(\calRp-\Pi_H\calRp, r)
\end{align*}
for any $w_H\in \Vh$ and $r_H\in \Qh$ with $w_H|_\Gamma=r_H$.
In the last step, we have used the definition of~$\calG$ and the Galerkin-type orthogonality
\[
  \frakk(u-\calRu, w_H)+\frakaTilde(p-\calRp, r_H) = 0, 
\]
which follows from the definition of the coupled Ritz projection in~\eqref{eq:RitzPGLOD}. 
Using once more the inf-sup stability of~$\frakb$ and~\cite[Prop.~II.2.5]{BreF91}, we obtain 
\begin{align*}
&\frakk(u-\calRu, w-w_H)+\frakaTilde(p-\calRp, r-r_H)\\
%&\qquad \lesssim \|(u-\calRu, p-\calRp)\|_{\calV\times \calQ}\, \inf_{w_H\in \Vh, r_H\in \Qh, w_H|_\Gamma=r_H}\|(w-w_H, r-r_H)\|_{\calV\times \calQ} \\
&\qquad\qquad\lesssim \|(u-\calRu, p-\calRp)\|_{\calV\times \calQ}\ \Big(
\inf_{w_H\in\Vh}\|w-w_H\|_\calV + \inf_{r_H\in \Qh}\|r-r_H\|_\calQ \Big). 
\end{align*}
Further, the application of standard interpolation estimates yields 
\begin{align*}
\inf_{w_H\in\Vh}\|w-w_H\|_\calV + \inf_{r_H\in \Qh}\|r-r_H\|_\calQ 
&\lesssim H^{s-1}|w|_{H^s(\Omega)}+\inf_{r_H\in \Qh}\|r-r_H\|_\calQ\\
&\lesssim (H^{s-1}+\wcba(r,\Qh))\, \big( \|u-\calRu\|_\Omega + \|p-\calRp\|_\Gamma \big).
\end{align*}
%\barbara{hier wieder Frage nach $\eps$-Abhaengigkeit der $H^s$-norm von $w$} 
Recalling $\calRp=(\id-\calG_m)\Pi_H\calRp$, we obtain
\begin{align*}
&\frakaTilde(\calRp-\Pi_H\calRp, r)\\
&\qquad=-\frakaTilde(\calG_m\Pi_H\calRp, r)\\
%&\qquad=\frakaTilde(\calG_m\Pi_H(p-\calRp), r)-\frakaTilde(\calG_m\Pi_H p, r)\\
&\qquad=\frakaTilde(\calG_m\Pi_H(p-\calRp), r)+\frakaTilde((\calG-\calG_m)\Pi_H p, r)-\frakaTilde(\calG\Pi_H p, r)\\
&\qquad=\frakaTilde(\calG_m\Pi_H(p-\calRp), r-(\id-\calG)\Pi_H r)+\frakaTilde((\calG-\calG_m)\Pi_H p, r-(\id-\calG)\Pi_H r)\\
&\qquad\qquad-\frakaTilde(\calG\Pi_H p, r-(\id-\calG)\Pi_H r)\\
&\qquad\lesssim \bigl(\|p-\calRp\|_\calQ + m^{(d-1)/2}\gamma^m \|p\|_\calQ + \|\calG\Pi_H p\|_\calQ\bigr)\, \|r-(\id-\calG)\Pi_H r\|_\calQ,
\end{align*}
where we used the stability of $\calG_m$ and $\Pi_H$ in the last step.
With standard LOD estimates \cite{MalP18,ElfGH15} we obtain
\begin{align*}
\|r-(\id-\calG)\Pi_H r\|_\calQ
&\lesssim \|r-r_h\|_\calQ + H\, \|\tilde \calA_{\eps, h}r_h\|_\Gamma\\
&\lesssim \|r-r_h\|_\calQ + H\, (\|u-\calRu\|_\Omega + \|p-\calRp\|_\Gamma).
\end{align*}
It remains to bound $\calG\Pi_H p$ from above. We have that
\[
-\calG \Pi_H p= (\id-\Pi_H)p + (\id-\Pi_H)((\id-\calG)\Pi_Hp -p),
\]
where the first term can be bounded by $\wcba(p, \Qh)$. For the second term it follows again by standard LOD estimates that
\[
\|(\id-\Pi_H)((\id-\calG)\Pi_Hp -p)\|_\calQ \lesssim \|p-p_h\|_\calQ + H\,\|\tilde \calA_{\eps,h}p_h\|_\Gamma.
\]
Combining all foregoing estimates finishes the proof.
\end{proof}
Note that a similar result to Proposition~\ref{prop:ritz:L2} can also be established for $\|p-\calRp\|_\Gamma$ by writing $p-\calRp=p-\Pi_H\calRp+\Pi_H\calRp-\calRp$ and using the properties of $\Pi_H$.
A more careful analysis of $\|p-\calRp\|_\Gamma$ is omitted to keep the manuscript at a considerable length.
\begin{remark}
If we write the Ritz problem~\eqref{eq:RitzPGLOD} in a Galerkin form, i.e., 
\begin{align*}
	\frakk(\calRu,v_H) + \frakaTilde(\calRp,\tilde q_H)
	+ \frakb(\Pi_H\tilde q_H-v_H|_\Gamma, \calRl)
	&= \frakk(u, v_H) + \frakaTilde(p, \tilde q_H), \\
	\frakb(\Pi_H \calRp - \calRu|_\Gamma, \mu_H) &= 0
\end{align*}
with test functions $v_H\in\Vh$, $\tilde q_H\in \tQh$, $\mu_H\in\Mh$, then the right-hand side~$p-\Pi_H\calRp$ in the auxiliary problem~\eqref{eq:ritz:dual} is modified to~$p-\calRp$.
We then deduce
\begin{align*}
&\|u-\calRu\|^2_\Omega + \|p-\calRp\|_\Gamma^2\\
&\qquad\lesssim \|(u-\calRu, p-\calRp)\|_{\calV\times\calQ}\inf_{w_H|_\Gamma=\Pi_H\tilde r_H}\|(w-w_H, r-\tilde r_H)\|_{\calV\times \calQ} + |\frakb(\Pi_H\calRp-\calRp, \lambda^z)|.
\end{align*}
At this point, the key is the regularity and characterization of $\lambda^z$.
A priori, we only have~$\lambda^z\in H^{-1/2}(\Gamma)$, but for sufficiently regular $w$ one can deduce that~$\lambda^z\in L^2(\Gamma)$.
This would allow to estimate $\Pi_H\calRp-\calRp$ in $L^2(\Gamma)$, which leads to an order $H$ for this term and, on the whole, to 
\[
\|u-\calRu\|_\Omega + \|p-\calRp\|_\Gamma\lesssim H^{2(s-1)}+H.
\]
This seems to be better than the rate $H^{s-1}$ obtained in the worst case of Proposition~\ref{prop:ritz:L2}.
We emphasize, however, that this reasoning might only be valid if we have $s=2$, when we might expect $\sigma>1$ as well. Recall that in order to have $s=2$, the domain $\Omega$ needs to be convex.
\end{remark}
With the Ritz projection in hand, we are now able to estimate the error caused the presented multiscale finite element scheme. 
%
%=============================================================================
\subsection{Error estimate}
Let~$(u,p,\lambda)$ denote the exact solution of~\eqref{eq:PDAE:bilinear} and~$(u_H,\tilde p_H,\lambda_H)$ the PG-LOD solution defined through~\eqref{eq:semidiscretePGLOD}. 
As usual for the numerical analysis of parabolic systems, we decompose the errors in~$u$ and~$p$ with the help of the previously defined Ritz projection, which is applied pointwise in time. 
Thus, we consider  
\begin{align*}
  u - u_H 
  &= (u - \calRu) + (\calRu- u_H) =: \rho_u + \theta_u, \\
  p - \tilde{p}_H 
  &= (p - \calRp) + (\calRp - \tilde{p}_H) =: \rho_p + \theta_p.
\end{align*}
The main result of this paper is the following convergence result.
\begin{theorem}
\label{thm:main}
Consider a Lipschitz domain~$\Omega$ and the exact solution of~\eqref{eq:PDAE:bilinear} satisfying~$u\in H^1(0,T;H^s(\Omega))$ and~$p\in H^1(0,T;H^1(\Gamma))$, $1\le s\le 2$. 
Further assume that~$m$ satisfies Assumption~\ref{ass:m}. 
Let the initial values be chosen such that~$\|u_H(0)-\calRu(0)\|_{\Omega}=\calO(H)$ and $\|\tilde{p}_H(0)-\calRp(0)\|_{\Gamma}=\calO(H)$.
Then, we have the a priori error estimate
\begin{align*}
\|u(t)-u_H(t)\|_\Omega 
+ \|p(t)-\Pi_H \tilde p_H(t)\|_\Gamma 
\lesssim C(u,p)\, H^{s-1}.
\end{align*}
\end{theorem}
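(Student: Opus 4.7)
The plan is a classical parabolic error analysis based on the coupled Ritz projection defined in~\eqref{eq:RitzPGLOD}. I would split $u - u_H = \rho_u + \theta_u$ and $p - \tilde p_H = \rho_p + \theta_p$ with $\rho_u = u - \calRu$, $\theta_u = \calRu - u_H$ and analogously on the boundary. The $\rho$-contribution is directly controlled by Corollary~\ref{cor:ritz} and Proposition~\ref{prop:ritz:L2}, so it remains to bound $\|\theta_u\|_\Omega$ and $\|\Pi_H\theta_p\|_\Gamma$ via an energy argument. A key preliminary observation, following the reasoning of Lemma~\ref{lem_pHequalsuH}, is that the discrete constraints in both~\eqref{eq:semidiscretePGLOD:b} and~\eqref{eq:RitzPGLOD:b} are in fact satisfied pointwise because $\Pi_H\tilde p_H - u_H|_\Gamma$ lies in $\Mh$ (cf.~Remark~\ref{rem:Q1}); subtracting one identity from the other yields $\Pi_H\theta_p = \theta_u|_\Gamma$.

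Subtracting~\eqref{eq:semidiscretePGLOD:a} from~\eqref{eq:PDAE:bilinear} tested against discrete functions and eliminating the stationary $\rho$-contribution by means of~\eqref{eq:RitzPGLOD:a} produces
\begin{align*}
(\dot\theta_u, v_H)_\Omega &+ (\dot\theta_p, q_H)_\Gamma + \frakk(\theta_u, v_H) + \fraka(\theta_p, q_H) + \frakb(q_H - v_H|_\Gamma, \lambda - \lambda_H + \calRl)\\
&= -(\dot\rho_u, v_H)_\Omega - (\dot\rho_p, q_H)_\Gamma + \alpha(\rho_p, q_H)_\Gamma.
\end{align*}
Testing with $v_H = \theta_u \in \Vh$ and $q_H = \Pi_H\theta_p = \theta_u|_\Gamma \in \Qh$ makes the coupling term vanish, gives $(\dot\theta_u, \theta_u)_\Omega = \tfrac12 \tfrac{d}{dt}\|\theta_u\|_\Omega^2$, and, by the $L^2$-self-adjointness of $\Pi_H$ together with $\theta_p,\dot\theta_p\in\tQh$, also $(\dot\theta_p, \Pi_H\theta_p)_\Gamma = \tfrac12\tfrac{d}{dt}\|\Pi_H\theta_p\|_\Gamma^2$.

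For the elliptic part I would repeat the argument from the proof of Lemma~\ref{lem:existenceRitz}: writing $\theta_p = (\id - \calG_m)\Pi_H\theta_p$, decomposing $\calG_m = \calG - (\calG - \calG_m)$, invoking the Galerkin orthogonality satisfied by $\calG$ and the decay estimate~\eqref{eq:corrector:decay} under Assumption~\ref{ass:m} yields $\fraka(\theta_p,\Pi_H\theta_p) \gtrsim \|\theta_p\|_\calQ^2 - \alpha\|\Pi_H\theta_p\|_\Gamma^2$. Combining with $\frakk(\theta_u,\theta_u) = \kappa\|\nabla\theta_u\|_\Omega^2$ and Young's inequality on the right-hand side yields the differential inequality
\begin{align*}
\tfrac{d}{dt}\bigl(\|\theta_u\|_\Omega^2 + \|\Pi_H\theta_p\|_\Gamma^2\bigr) + \kappa\|\nabla\theta_u\|_\Omega^2 + \|\theta_p\|_\calQ^2
\lesssim \|\dot\rho_u\|_\Omega^2 + \|\dot\rho_p\|_\Gamma^2 + \|\rho_p\|_\Gamma^2 + \|\theta_u\|_\Omega^2 + \|\Pi_H\theta_p\|_\Gamma^2,
\end{align*}
to which Gronwall's lemma applies. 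The time derivatives of the Ritz errors are controlled by applying Proposition~\ref{prop:ritz:L2} to the data $(\dot u, \dot p)$; this is legitimate because the Ritz projector is linear in its data and the regularity assumptions ensure $(\dot u,\dot p) \in L^2(0,T;H^s(\Omega)) \times L^2(0,T;H^1(\Gamma))$.

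Combining the Gronwall bound with the triangle inequalities $\|u - u_H\|_\Omega \leq \|\rho_u\|_\Omega + \|\theta_u\|_\Omega$ and $\|p - \Pi_H\tilde p_H\|_\Gamma \leq \|p - \Pi_H\calRp\|_\Gamma + \|\Pi_H\theta_p\|_\Gamma$, inserting the initial error assumption, and inserting the worst-case rate $H^{s-1}$ from~\eqref{eq:ritz:L2-short} (cf.~Remark~\ref{rem:ritzL2}) then delivers the asserted estimate. The main obstacle I expect is the verification of the coercivity of $\fraka(\theta_p,\Pi_H\theta_p)$: since $\theta_p \in \tQh$ but the test function lies in $\Qh$, the corrector splitting of Lemma~\ref{lem:existenceRitz} must be applied carefully to produce the lower bound, and the residual negative term $-\alpha\|\Pi_H\theta_p\|_\Gamma^2$ must be absorbed by the mass contribution on the right-hand side and therefore only enters the final estimate through the exponential factor in Gronwall's lemma.
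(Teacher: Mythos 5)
Your proposal takes essentially the same path as the paper's proof: the Ritz splitting $(\rho_u,\rho_p)$/$(\theta_u,\theta_p)$, the constraint identity $\Pi_H\theta_p = \theta_u|_\Gamma$ derived from~\eqref{eq:RitzPGLOD:b} and~\eqref{eq:semidiscretePGLOD:b}, the test choice $(v_H,q_H)=(\theta_u,\Pi_H\theta_p)$ which annihilates the $\frakb$ terms, and the corrector decay~\eqref{eq:corrector:decay} to tame the $\fraka$-mismatch between the trial space $\tQh$ and the test space $\Qh$. The one technical deviation is in how that mismatch is handled: you bound $\fraka(\theta_p,\Pi_H\theta_p)$ from below directly in the spirit of Lemma~\ref{lem:existenceRitz}, producing a residual $-\alpha\,\|\Pi_H\theta_p\|_\Gamma^2$ that you then absorb via Gronwall, whereas the paper instead keeps $\fraka(\theta_p,\theta_p)\geq\alpha\,\|\nabla_\Gamma\theta_p\|_\Gamma^2$ on the left and shows that the correction $\fraka(\theta_p,\theta_p-\Pi_H\theta_p)$ is small enough (of order $m^{(d-1)/2}\gamma^m+\alpha H$ times $\|\nabla_\Gamma\theta_p\|_\Gamma^2$) to be absorbed without generating any lower-order residual; this lets the paper divide by $\|(\theta_u,\Pi_H\theta_p)\|$ and integrate directly, avoiding Gronwall and the attendant $e^{CT}$ factor in the constant. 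Both routes deliver the stated $H^{s-1}$ rate, so this is a valid alternative rather than a gap; the paper's version is simply sharper in its dependence on $T$.
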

Note that we have only assumed the lowest possible regularity on~$p$.
We emphasize that in view of Remark~\ref{rem:ritzL2} higher convergence rates like $H^{2(s-1)}+H^{s+\sigma-2}$ can be obtained for $p\in H^1(0,T;H^\sigma(\Gamma))$ with $1\leq\sigma\leq2$. 
\begin{proof}
The Ritz errors $\rho_u$ and $\rho_p$ were already estimated in Corollary~\ref{cor:ritz} and Proposition~\ref{prop:ritz:L2}. Applying these estimates also for the first time derivatives, we get 
\begin{align*}
  \| p-\Pi_H\calRp \|_\Gamma+\|\rho_p\|_{\Gamma}
  &\lesssim H^{s-1}|u|_{H^s(\Omega)} + H\, \|\widetilde \calA_{\eps,h} p_h\|_\Gamma,\\ 
  \|\dot \rho_p\|_{\Gamma} + \|\dot\rho_u\|_{\Omega}
  &\lesssim H^{s-1}|\dot u|_{H^s(\Omega)} + H\, \|\widetilde \calA_{\eps,h} \dot p_h\|_\Gamma.
\end{align*}
Thus, it remains to estimate~$\theta_u$ and~$\theta_p$. 
Using~\eqref{eq:PDAE:bilinear}, \eqref{eq:semidiscretePGLOD}, and the definition of the coupled Ritz projection~\eqref{eq:RitzPGLOD}, we note that the pair~$(\theta_u,\theta_p)$ satisfies for all test functions $v_H\in \Vh$ and $q_H\in\Qh$, 
\begin{align*}
&(\dot \theta_u, v_H)_\Omega + (\dot\theta_p, q_H)_\Gamma + \frakk(\theta_u, v_H) + \fraka(\theta_p, q_H)\\
&\quad=(\calRdu, v_H)_\Omega + (\calRdp, q_H)_\Gamma + \frakk(\calRu, v_H) + \fraka(\calRp, q_H)
+ \frakb(q_H-v_H|_\Gamma, \lambda_H)\\
&\qquad\quad - (f, v_H)_\Omega - (g, q_H)_\Gamma \\   
&\quad=(\dot u - \dot \rho_u, v_H)_\Omega + (\dot p - \dot \rho_p, q_H)_\Gamma + \frakk(u, v_H) - \frakb(v_H|_\Gamma-q_H,\calRl) + \fraka(p, q_H)\\
&\qquad\quad + (\alpha \rho_p, q_H)_\Gamma
+ \frakb(q_H-v_H|_\Gamma, \lambda_H) - (f, v_H)_\Omega - (g, q_H)_\Gamma \\   
&\quad= - (\dot \rho_u, v_H)_\Omega - (\dot \rho_p, q_H)_\Gamma - \frakb(v_H|_\Gamma-q_H,\calRl) + (\alpha \rho_p, q_H)_\Gamma
+ \frakb(q_H-v_H|_\Gamma, \lambda_H-\lambda). 
\end{align*}
Since~$\theta_p \in \tQh$ is not allowed as test function, the natural choice along with~$v_H=\theta_u$ is~$q_H=\Pi_H\theta_p\in\Qh$. 
Recall the $L^2(\Gamma)$-orthogonality of $\Qh$ and $W_h$ from~\eqref{eq:IH:L2ortho1} such that one deduces  
\[
\frac12 \ddt \| \Pi_H \theta_p\|^2_\Gamma
= (\Pi_H \dot \theta_p, \Pi_H \theta_p)_\Gamma
= (\dot \theta_p, \Pi_H\theta_p)_\Gamma.
\]
With this, we get 
\begin{align*}
&\frac12 \frac{\text{d}}{\dt}\|\theta_u\|^2_\Omega 
+ \frac12 \frac{\text{d}}{\dt}\|\Pi_H\theta_p\|^2_\Gamma 
+ \kappa\, \|\nabla \theta_u\|^2_\Omega 
+ \alpha\,\|\nabla_\Gamma \theta_p\|^2_\Gamma\\
&\qquad\le (\dot \theta_u, \theta_u)_\Omega + (\dot\theta_p, \Pi_H \theta_p)_\Gamma + \frakk(\theta_u, \theta_u) + \fraka(\theta_p, \theta_p)\\  
&\qquad= (\dot \theta_u, \theta_u)_\Omega + (\dot\theta_p, \Pi_H\theta_p)_\Gamma + \frakk(\theta_u, \theta_u) + \fraka(\theta_p, \Pi_H \theta_p)
+ \fraka(\theta_p, \theta_p - \Pi_H \theta_p). 
\end{align*}
By~\eqref{eq:RitzPGLOD:b} and~\eqref{eq:semidiscretePGLOD:b} we conclude that the trace of~$\theta_u$ equals~$\Pi_H\theta_p$. 
Thus, the terms~$\frakb(q_H-v_H|_\Gamma, \lambda_H-\lambda)$ and~$\frakb(v_H|_\Gamma-q_H,\calRl)$  vanish and we conclude that 
\begin{align*}
&\frac12 \frac{\text{d}}{\dt}\|\theta_u\|^2_\Omega 
+ \frac12 \frac{\text{d}}{\dt}\|\Pi_H\theta_p\|^2_\Gamma 
+ \kappa\, \|\nabla \theta_u\|^2_\Omega 
+ \alpha\,\|\nabla_\Gamma \theta_p\|^2_\Gamma\\
&\qquad\le - (\dot \rho_u, \theta_u)_\Omega - (\dot \rho_p, \Pi_H \theta_p)_\Gamma + (\alpha \rho_p, \Pi_H \theta_p)_\Gamma 
+ \fraka(\theta_p, \theta_p - \Pi_H \theta_p).
\end{align*}
For the last term on the right-hand side, we deduce due to the definition of $\calG$, the exponential decay~\eqref{eq:corrector:decay}, and the stability and approximation properties of $\Pi_H$ that 
\begin{align*}
\fraka(\theta_p, (\id-\Pi_H)\theta_p)
&= \frakaTilde (\theta_p, (\id-\Pi_H)\theta_p)-(\alpha \theta_p, (\id-\Pi_H)\theta_p)_\Gamma\\
&= \frakaTilde((\id-\calG_m)\Pi_H\theta_p, (\id-\Pi_H)\theta_p)-(\alpha \theta_p, (\id-\Pi_H)\theta_p)_\Gamma\\
&=\frakaTilde((\calG-\calG_m)\Pi_H\theta_p, (\id-\Pi_H)\theta_p)-(\alpha \theta_p, (\id-\Pi_H)\theta_p)_\Gamma\\
&\lesssim (m^{(d-1)/2}\gamma^m+\alpha H)\, \|\nabla_\Gamma \theta_p\|^2_\Gamma. 
\end{align*}
Hence, the term $\fraka(\theta_p, (\id-\Pi_H)\theta_p)$ can be absorbed on the left-hand side for sufficiently large~$m$ and $H\lesssim 1$. 
Thus, we obtain with $\|(v, q)\|^2 \coloneqq \|v\|^2_\Omega + \|q\|^2_\Gamma$ the estimate 
\begin{align*}
\|(\theta_u, \Pi_H\theta_p)\|\,\ddt&\, \|(\theta_u, \Pi_H\theta_p)\|\\
&\quad= \frac12\,  \ddt\, \|(\theta_u, \Pi_H\theta_p)\|^2\\
%
%&\quad\leq \frac12 \Big( \ddt\, \|\theta_u\|^2_\Omega 
%+ \frac{\text{d}}{\dt}\|\Pi_H\theta_p\|^2_\Gamma 
%+ 2\kappa\, \|\nabla \theta_u\|^2_\Omega 
%+ \alpha\, \|\nabla_\Gamma \theta_p\|^2_\Gamma \Big) \\
% 
&\quad\lesssim - (\dot \rho_u, \theta_u)_\Omega 
-  (\dot \rho_p, \Pi_H \theta_p)_\Gamma 
+ (\alpha\, \rho_p, \Pi_H \theta_p)_\Gamma \\
&\quad\le \|\dot \rho_u\|_\Omega \|\theta_u\|_\Omega 
+ \|\dot \rho_p\|_\Gamma \|\Pi_H\theta_p\|_\Gamma
+ \alpha\, \| \rho_p\|_\Gamma \|\Pi_H\theta_p\|_\Gamma \\
&\quad\le \bigl( \|\dot \rho_u\|_\Omega + \|\dot \rho_p\|_\Gamma + \alpha\, \|\rho_p\|_\Gamma \bigr)\, \|(\theta_u , \Pi_H \theta_p)\|.
\end{align*}
Thus, division by $\|(\theta_u , \Pi_H \theta_p)\|$, integration over time,  and taking squares results in
\begin{align*}
  &\|\theta_u(t)\|^2_\Omega 
  + \|\Pi_H\theta_p(t)\|^2_\Gamma \\
  &\quad\lesssim \|\theta_u(0)\|^2_\Omega + \|\Pi_H\theta_p(0)\|^2_\Gamma  
  +  \int_0^t \|\dot \rho_u(\tau)\|^2_\Omega  + \|\dot \rho_p(\tau)\|^2_\Gamma 
  + \alpha^2\| \rho_p(\tau)\|^2_\Gamma \dtau \\
 &\quad \lesssim \|\theta_u(0)\|^2_\Omega + \|\Pi_H\theta_p(0)\|^2_\Gamma \\ 
 &\qquad+  \int_0^t H^{2(s-1)} \big( |u(\tau)|_{H^s(\Omega)}^2 + |\dot u(\tau)|_{H^s(\Omega)}^2\big) + H^2 \big(\|\widetilde \calA_{\eps, h}p_h(\tau)\|_\Gamma^2 + \|\widetilde \calA_{\eps, h}\dot p_h(\tau)\|_\Gamma^2\big) \dtau.
\end{align*}
Due to the assumption on the initial values we have~$\|\theta_u(0)\|_\Omega^2=\calO(H^2)=\|\Pi_H\theta_p(0)\|_\Gamma^2$. 
Further, since~$u-u_H=\rho_u+\theta_u$ and $p-\Pi_H\tilde p_H=p-\Pi_H\calRp+\Pi_H\theta_p$, the combination of the estimates for the Ritz projection and for $(\theta_u, \Pi_H\theta_p)$ finishes the proof.
\end{proof}
The numerical verification that the obtained convergence rates are indeed $\eps$-independent, is subject of the following section. 
%
%
%=============================================================================
%=========  Numerics
%=============================================================================
\section{Numerical Examples}\label{sect:numerics}
This final section is devoted to the numerical verification of the obtained convergence results. In particular, we will investigate the following questions: 
\begin{itemize}
	\item convergence behavior for smooth and discontinuous coefficients~$a_\eps$,
	\item convergence in the presence of mixed boundary conditions,
	\item influence of the localization parameter $m$,
	\item applicability of the nodal interpolation operator, and
	\item benefits of mesh refinements on the boundary only.
\end{itemize}
All examples consider~$\Omega=(0,1)^2$, a time horizon~$T=0.1$, and the system equations 
\begin{subequations}
\label{eq:numExp}
\begin{align}
	\dot  u - \tfrac{1}{10}\, \Delta u &= f \qquad\text{in } \Omega, \label{eq:numExp:a} \\
	\dot u - \nabla_\Gamma\cdot( a_\eps \nabla_\Gamma u) + \partial_\nu u &= g \qquad \text{on } \Gamma_\text{dyn}, \label{eq:numExp:b} \\
	u &= 0 \qquad \text{on } \partial\Omega\setminus\Gamma_\text{dyn}. \label{eq:numExp:c}
\end{align}
\end{subequations}
The values of $a_\eps$, the right-hand sides, the initial data, and the boundary parts are specified in the following examples. 
Since we focus on the spatial discretization error, we consider an implicit Euler discretization in time. Moreover, all approximations are computed on the same uniform temporal mesh as the respective reference solution, namely with time step size~$\tau=0.01$. 
Besides the LOD spaces for the approximation of~$p$, we consider uniform partitions of~$\Omega$ into quadrilaterals. This means that we approximate~$u$ by finite element functions of partial degree one, cf.~Remark~\ref{rem:Q1}.
%
%
%=============================================================================
\subsection{Smooth and discontinuous coefficients}\label{sect:numerics:exp1}
As in the theoretical part of the paper we consider dynamic boundary conditions on the entire boundary. In terms of~\eqref{eq:numExp} this means~$\Gamma_\text{dyn}=\Gamma=\partial\Omega$. 
As right-hand sides we consider~$f(t)\equiv 1$, $g(t)=t$ and the initial condition is defined by~$u_0(x,y) = \sin(\pi x) \cdot \cos(\frac 52 \pi y + 1)$. In the PDAE formulation, where we can choose~$p_0$ independently, we set~$p_0(x,y)=u_0(x,y)$ in a consistent manner.

% coefficient
For the diffusion coefficient~$a_\eps$ we compare the results for smooth but highly oscillatory and general discontinuous coefficients. For this, we define 
\[
  a_\eps^\text{sm}(x) 
  \coloneqq \frac{1}{ 2 + \cos(2\pi x \eps^{-1}) }
\] 
and~$a_\eps^\text{dc}$ by the piecewise constant (and thus discontinuous) function which takes random values in the range~$[\frac{1}{10}, 1]$ on a partition of mesh size~$\eps$. We emphasize that~$a_\eps^\text{dc} \not\in W^{1,\infty}(\Gamma)$, leading to a solution with~$p\in H^{1}(\Gamma)$ only. 
\begin{figure}
\includegraphics[width=.45\textwidth]{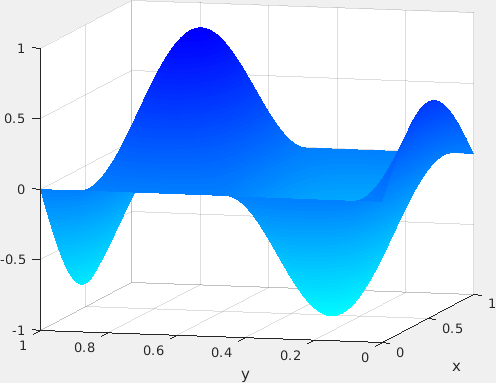}\hspace{0.5em}
\includegraphics[width=.45\textwidth]{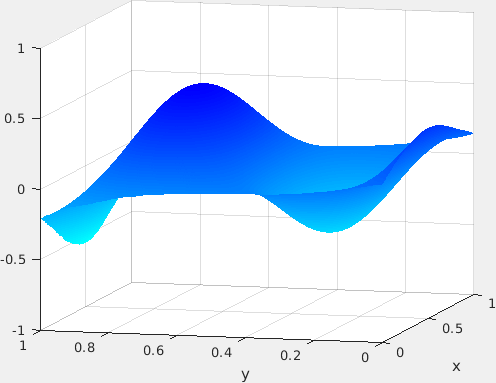}
\caption{Illustration of the solution~$u$ for~$t=0$ (left) and~$t=0.1$ (right) in the case of a random coefficient~$a_\eps^\text{dc}$ with~$\eps=2^{-9}$.}
\label{fig:exp1:solution}
\end{figure}

% LOD 
In this first example we apply the nodal interpolation operator such that the correctors are automatically localized and we have no localization parameter~$m$ to care about, cf.~Remark~\ref{rem:nodalInterpolation}. 
Apart from that, we consider the situation as described in Section~\ref{sect:disc:LOD}, where the meshes~$\calTO|_\Gamma$ and~$\calTG$ coincide and~$H_\Omega=H_\Gamma$ holds. 
% results
The resulting numerical approximation for~$\eps=2^{-9}$ is shown in Figure~\ref{fig:exp1:solution}. The corresponding convergence plots for~$u-u_H$ and $p-\Pi_H \tilde p_H$ measured in the~$L^2$-norm for~$a_\eps^\text{sm}$ and~$a_\eps^\text{dc}$ are presented in Figure~\ref{fig:exp1:conv}. Note that we are in the range~$H \gtrsim \eps$. As a result, we observe poor results for the standard finite element approach as we are in the pre-asymptotic regime.  
On the other hand, the combination of Lagrange elements for~$u$ and a multiscale approach for~$p$ yields remarkable results: In the case of a smooth coefficient we even reach the full second-order rate. 
For the discontinuous coefficient, $u$ converges with second order whereas~$p$ shows an order of~$0.647$ (averaged over the last three mesh sizes). Note that this is slightly better than the shown bounds of Theorem~\ref{thm:main}, which equals~$0.5$ for~$\sigma=1$ and~$s=3/2$. 
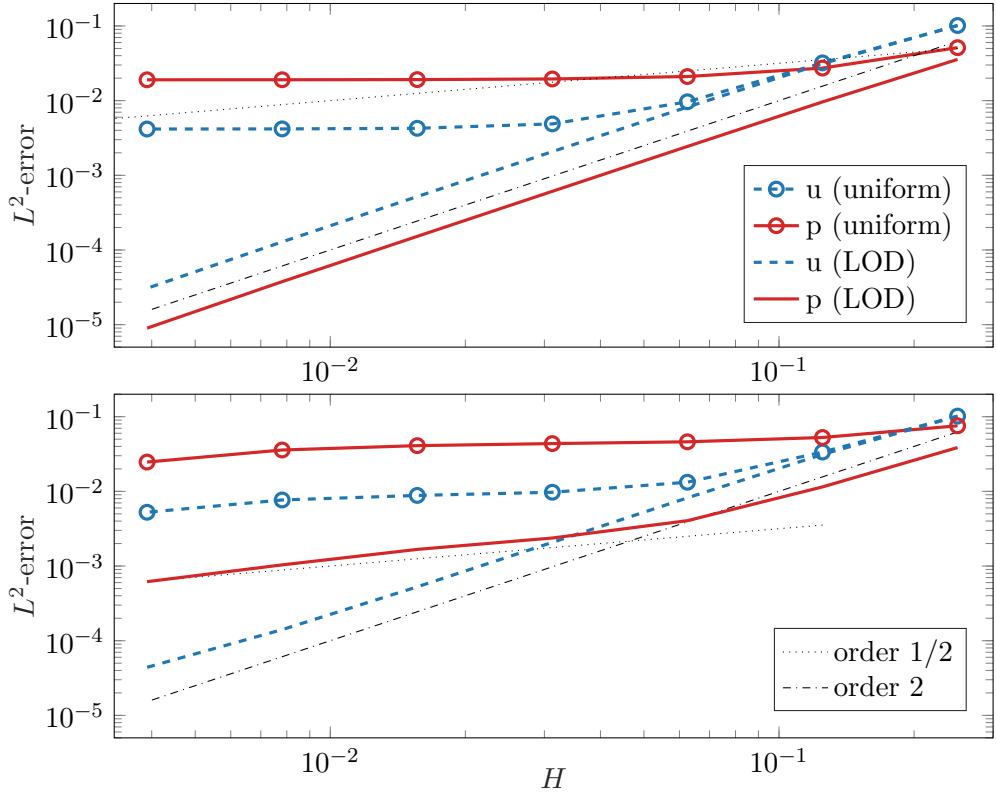
\begin{figure}
% This file was created by matlab2tikz.
%
%The latest updates can be retrieved from
%  http://www.mathworks.com/matlabcentral/fileexchange/22022-matlab2tikz-matlab2tikz
%where you can also make suggestions and rate matlab2tikz.
%
\begin{tikzpicture}

\begin{axis}[%
width=4.6in,%2.35in,
height=1.8in,
at={(0.758in,0.481in)},
scale only axis,
xmode=log,
xmin=0.0033,
xmax=0.3,
xminorticks=true,
xlabel style={font=\color{white!15!black}},
ymode=log,
ymin=5e-06,
ymax=0.2,
yminorticks=true,
ylabel style={font=\color{white!15!black}},
ylabel={$L^2$-error},
axis background/.style={fill=white},
legend style={at={(0.97, 0.53)},legend cell align=left, align=left, draw=white!15!black}
]

\addplot [color=color0, dashed, very thick, mark=o, mark options={solid, color0}, mark size=2.6]
  table[row sep=crcr]{%
0.25	0.101412805873472\\
0.125	0.032020938890403\\
0.0625	0.00965177851297862\\
0.03125	0.00488154900095019\\
0.015625	0.00426068383388044\\
0.0078125	0.00418162415470507\\
0.00390625	0.00416730798286368\\
%0.001953125	0.00416407718255904\\
};
\addlegendentry{u (uniform)}

\addplot [color=color3, very thick, mark=o, mark options={solid, color3}, mark size=2.6]
  table[row sep=crcr]{%
0.25	0.0512212418491566\\
0.125	0.0272883570002207\\
0.0625	0.0210348705774101\\
0.03125	0.0195087983894958\\
0.015625	0.0191316086748544\\
0.0078125	0.0190376243660371\\
0.00390625	0.0190141485325175\\
%0.001953125	0.0190082808398392\\
};
\addlegendentry{p (uniform)}

\addplot [color=color0, dashed, very thick]
table[row sep=crcr]{%
	0.25	0.101344430273117\\
	0.125	0.0313062590898206\\
	0.0625	0.00820968053932293\\
	0.03125	0.00207520740117321\\
	0.015625	0.000518899513558243\\
	0.0078125	0.000128399451522954\\
	0.00390625	3.06850232787514e-05\\
	%0.001953125	6.24960091472337e-06\\
};
\addlegendentry{u (LOD)}

\addplot [color=color3, very thick]
  table[row sep=crcr]{%
0.25	0.035654180002709\\
0.125	0.00961283402603435\\
0.0625	0.0024352600956154\\
0.03125	0.000610287253827253\\
0.015625	0.000152256873936824\\
0.0078125	3.76380304677334e-05\\
0.00390625	8.97628007630684e-06\\
%0.001953125	1.81032466647304e-06\\
};
\addlegendentry{p (LOD)}

\addplot [color=black, dotted]
table[row sep=crcr]{%
	0.25	0.05\\
	0.125	0.0353553390593274\\
	0.0625	0.025\\
	0.03125	0.0176776695296637\\
	0.015625	0.0125\\
	0.0078125	0.00883883476483184\\
	0.00390625	0.00625\\
	0.001953125	0.00441941738241592\\
};
%\addlegendentry{order $\frac 12$}

%\addplot [color=black, dotted]
%table[row sep=crcr]{%
%	0.25	0.0353553390593274\\
%	0.125	0.0210224103813429\\
%	0.0625	0.0125\\
%	0.03125	0.00743254446876701\\
%	0.015625	0.00441941738241592\\
%	0.0078125	0.00262780129766786\\
%	0.00390625	0.0015625\\
%%	0.001953125	0.000929068058595876\\
%};
%%\addlegendentry{order $0.75$}

%\addplot [color=black, dashdotted]
%table[row sep=crcr]{%
%	0.25	0.25\\
%	0.125	0.125\\
%	0.0625	0.0625\\
%	0.03125	0.03125\\
%	0.015625	0.015625\\
%	0.0078125	0.0078125\\
%	0.00390625	0.00390625\\
%	0.001953125	0.001953125\\
%};
%\addlegendentry{order 1}

\addplot [color=black, dashdotted]
table[row sep=crcr]{%
	0.25	0.0625\\
	0.125	0.015625\\
	0.0625	0.00390625\\
	0.03125	0.0009765625\\
	0.015625	0.000244140625\\
	0.0078125	6.103515625e-05\\
	0.00390625	1.52587890625e-05\\
%	0.001953125	3.814697265625e-06\\
};
%\addlegendentry{order 2}

\addplot [color=gray]
  table[row sep=crcr]{%
0.001953125	1e-7\\
0.001953125	1\\
};
%\addlegendentry{eps}

%\addplot [color=green, dashed]
%  table[row sep=crcr]{%
%0.0441941738241592	0.0001\\
%0.0441941738241592	1\\
%};
%\addlegendentry{sqrt(eps)}

\end{axis}
\end{tikzpicture}%
% This file was created by matlab2tikz.
%
%The latest updates can be retrieved from
%  http://www.mathworks.com/matlabcentral/fileexchange/22022-matlab2tikz-matlab2tikz
%where you can also make suggestions and rate matlab2tikz.
%
\begin{tikzpicture}

\begin{axis}[%
width=4.6in,%2.35in,
height=1.8in,
at={(0.758in,0.481in)},
scale only axis,
xmode=log,
xmin=0.0033,
xmax=0.3,
xminorticks=true,
xlabel style={font=\color{white!15!black}},
xlabel={$H$},
xlabel style={below=-0.1in},
ymode=log,
ymin=5e-06,
ymax=0.2,
yminorticks=true,
ylabel style={font=\color{white!15!black}},
ylabel={$L^2$-error},
axis background/.style={fill=white},
legend style={at={(0.97, 0.32)},legend cell align=left, align=left, draw=white!15!black}
]

\addplot [color=black, dotted]
table[row sep=crcr]{%
%	0.25	0.005\\
	0.125	0.00353553390593274\\
	0.0625	0.0025\\
	0.03125	0.00176776695296637\\
	0.015625	0.00125\\
	0.0078125	0.000883883476483184\\
	0.00390625	0.000625\\
	%	0.001953125	0.000441941738241592\\
};
\addlegendentry{order $1/2$}

%\addplot [color=black, dotted]
%table[row sep=crcr]{%
%	0.25	0.0353553390593274\\
%	0.125	0.0210224103813429\\
%	0.0625	0.0125\\
%	0.03125	0.00743254446876701\\
%	0.015625	0.00441941738241592\\
%	0.0078125	0.00262780129766786\\
%	0.00390625	0.0015625\\
%%	0.001953125	0.000929068058595876\\
%};
%\addlegendentry{order $3/4$}

\addplot [color=black, dashdotted]
table[row sep=crcr]{%
	0.25	0.0625\\
	0.125	0.015625\\
	0.0625	0.00390625\\
	0.03125	0.0009765625\\
	0.015625	0.000244140625\\
	0.0078125	6.103515625e-05\\
	0.00390625	1.52587890625e-05\\
%	0.001953125	3.814697265625e-06\\
};
\addlegendentry{order $2$}

\addplot [color=color0, dashed, very thick, mark=o, mark options={solid, color0}, mark size=2.6]
  table[row sep=crcr]{%
0.25	0.101827344845498\\
0.125	0.0336211155752775\\
0.0625	0.0132516482426895\\
0.03125	0.00972981016690319\\
0.015625	0.00882069416739944\\
0.0078125	0.00766663405758851\\
0.00390625	0.00524972123116771\\
%0.001953125	7.00390894151694e-06\\
};
%\addlegendentry{u (uniform)}

\addplot [color=color3, very thick, mark=o, mark options={solid, color3}, mark size=2.6]
table[row sep=crcr]{%
	0.25	0.0755575060788344\\
	0.125	0.0526375781719465\\
	0.0625	0.0460994780253115\\
	0.03125	0.0436387428733219\\
	0.015625	0.0409498603438385\\
	0.0078125	0.035811365147498\\
	0.00390625	0.0246808632497424\\
	%0.001953125	3.74978980614106e-06\\
};
%\addlegendentry{p (uniform)}

\addplot [color=color0, dashed, very thick]
  table[row sep=crcr]{%
0.25	0.10140660959236\\
0.125	0.0312284171631803\\
0.0625	0.0081823874038917\\
0.03125	0.00207434212046493\\
0.015625	0.000526797876821954\\
0.0078125	0.000140701434874882\\
0.00390625	4.40317139473479e-05\\
%0.001953125	7.00390899896947e-06\\
};
%\addlegendentry{u (LOD)}

\addplot [color=color3, very thick]
  table[row sep=crcr]{%
0.25	0.0385986885745791\\
0.125	0.0114718985428273\\
0.0625	0.00404822831754504\\
0.03125	0.00237330527394362\\
0.015625	0.00167068140829583\\
0.0078125	0.00103639705864967\\
0.00390625	0.000617648102178747\\
%0.001953125	3.74978978173529e-06\\
};
%\addlegendentry{p (LOD)}

\addplot [color=gray]
  table[row sep=crcr]{%
0.001953125	1e-7\\
0.001953125	1\\
};
%\addlegendentry{eps}

\end{axis}
\end{tikzpicture}%
\caption{Convergence history in the $L^2$-norm for smooth (top) and discontinuous diffusion coefficients (bottom). Results shown for~$\eps=2^{-9}$.}
\label{fig:exp1:conv}
\end{figure}

% H1 norm
Finally, we comment on the results measured in the $H^1$-norm. 
For the discontinuous coefficient~$a_\eps^\text{dc}$ we observe no convergence in~$p$ but convergence of order~$1/2$ (standard FEM) and almost~$1$ (LOD) in~$u$. 
This improves for the smooth coefficient, where also~$p$ converges with order~$1$ if multiscale finite elements are applied. 
%
%
%=============================================================================
\subsection{Mixed boundary conditions}\label{sect:numerics:exp2}
In the second experiment we mix two types of boundary conditions and consider dynamic boundary conditions only on~$\Gamma_\text{dyn} \coloneqq (0,1) \times \{0\}$. On the remaining parts we assume homogeneous Dirichlet boundary conditions. 
The input data is given by~$f(t)\equiv 1$, $g(t) \equiv 0$ with initial condition~$u_0(x,y) = \sin(\pi x) \cdot \cos(\frac 52 \pi y)$. Further, we consider a random coefficient~$a_\eps^\text{dc}$ with~$\eps=2^{-9}$ as described in the previous experiment.
We now consider the LOD as described in Section~\ref{sect:disc:LOD}, i.e., with a quasi-interpolation operator and correctors computed over patches $\textup{N}^m(T)$. 

% results
The convergence results in the $L^2$-norm are very similar to the previous example such that we omit the details here. We mention, however, that convergence only takes place for sufficiently large~$m$, compared to the mesh size~$H$. Since we are interested in coarse mesh sizes, a localization parameter~$m=2$ usually yields satisfactorily results. 
To show the influence of the localization parameter~$m$ in more detail, we consider the error~$p-\tilde p_H$ in the~$H^1$-norm without the projection~$\Pi_H$. Recall from the previous example that~$p-\Pi_H\tilde p_H$ does not converge for the discontinuous diffusion coefficient. 
In Figure~\ref{fig:exp2:conv} one can observe that the ``full approximation'' $\tilde p_H$ also converges in the~$H^1$-norm. Further, one can see the limitation of the approximation for fixed~$m$, i.e., if the mesh size is no longer in the regime $m\gtrsim|\log H|$, the error stagnates or may even slightly grow with a further decrease of $H$. 
\begin{figure}
% This file was created by matlab2tikz.
%
%The latest updates can be retrieved from
%  http://www.mathworks.com/matlabcentral/fileexchange/22022-matlab2tikz-matlab2tikz
%where you can also make suggestions and rate matlab2tikz.
%
\begin{tikzpicture}

\begin{axis}[%
width=4.6in,%2.35in,
height=1.8in,
at={(0.758in,0.481in)},
scale only axis,
xmode=log,
xmin=0.007,
xmax=0.3,
xminorticks=true,
xlabel style={font=\color{white!15!black}},
xlabel={$H$},
xlabel style={below=-0.1in},
ymode=log,
ymin=3e-03,
ymax=2,
yminorticks=true,
ylabel style={font=\color{white!15!black}},
ylabel={$H^1$-error},
axis background/.style={fill=white},
legend style={at={(0.97, 0.37)},legend cell align=left, align=left, draw=white!15!black}
]

\addplot [color=color3, dotted, very thick]
  table[row sep=crcr]{%
0.25	1.15959282292278\\
0.125	1.11074685464759\\
0.0625	1.08634941740846\\
0.03125	1.07830444315791\\
0.015625	1.06183893689845\\
0.0078125	0.975299380602104\\
};
\addlegendentry{$p-\Pi_H \tilde p_H$} % p (LOD)

\addplot [color=color3, very thick]
  table[row sep=crcr]{%
0.25	0.198475224709606\\
0.125	0.0711171558786957\\
0.0625	0.0304909681165764\\
0.03125	0.0149245142715879\\
0.015625	0.00760872466220341\\
0.0078125	0.01029539302967\\
};
\addlegendentry{$p-\tilde p_H$} % p (full LOD var $m = 1,2,3,4,5,5,5$)

%%%%%%%%%%%%%%%  orders  %%%%%%%%%%%%%%%%%%%%%%%%%%%%%%%%%%%%% 

%\addplot [color=black, dotted]
%table[row sep=crcr]{%
%	0.25	0.5\\
%	0.125	0.353553390593274\\
%	0.0625	0.25\\
%	0.03125	0.176776695296637\\
%	0.015625	0.125\\
%	0.0078125	0.0883883476483184\\
%	0.00390625	0.0625\\
%};
%\addlegendentry{order $1/2$}

\addplot [color=black, dashdotted]
table[row sep=crcr]{%
	0.25	0.25\\
	0.125	0.125\\
	0.0625	0.0625\\
	0.03125	0.03125\\
	0.015625	0.015625\\
	0.0078125	0.0078125\\
	%	0.00390625	0.00390625\\
};
\addlegendentry{order $1$}

%\addplot [color=green]
%  table[row sep=crcr]{%
%0.0078125	0.0001\\
%0.0078125	1\\
%};
%\addlegendentry{eps}

%%%%%%%%%%%%%%%  m=1  %%%%%%%%%%%%%%%%%%%%%%%%%%%%%%%%%%%%% 

%\addplot [color=red, dotted, mark=o, mark options={solid, red}]
%table[row sep=crcr]{%
%	0.25	1.15959282292278\\
%	0.125	1.11085782500065\\
%	0.0625	1.08638828756566\\
%	0.03125	1.0775652572277\\
%	0.015625	1.05376503433542\\
%	0.0078125	0.969475472701698\\
%};
%\addlegendentry{p (LOD)}

\addplot [color=color3, very thick, mark=o, mark options={solid, color3}, mark size=2.6]
table[row sep=crcr]{%
	0.25	0.198475224709606\\
	0.125	0.144929700119228\\
	0.0625	0.155086328006263\\
	0.03125	0.218902731890255\\
	0.015625	0.256565829523912\\
	0.0078125	0.278717282053979\\	
};
%\addlegendentry{$p-\tilde p_H, m=1$}

%%%%%%%%%%%%%%%  m=2  %%%%%%%%%%%%%%%%%%%%%%%%%%%%%%%%%%%%% 

%\addplot [color=red, dotted, mark=o, mark options={solid, red}]
%table[row sep=crcr]{%
%	0.25	1.15861696360884\\
%	0.125	1.11074685464759\\
%	0.0625	1.08640544313003\\
%	0.03125	1.07736194506135\\
%	0.015625	1.05926957734783\\
%	0.0078125	0.974224286658041\\
%};
%\addlegendentry{p (LOD)}

\addplot [color=color3, very thick, mark=triangle, mark options={solid, color3}, mark size=2.6]
table[row sep=crcr]{%
	0.25	0.162508695667342\\
	0.125	0.0711171558786957\\
	0.0625	0.067600312592293\\
	0.03125	0.0937347281280486\\
	0.015625	0.107529009419803\\
	0.0078125	0.118562267838881\\
};
%\addlegendentry{$p-\tilde p_H, m=2$}

%%%%%%%%%%%%%%%  m=3  %%%%%%%%%%%%%%%%%%%%%%%%%%%%%%%%%%%%% 

%\addplot [color=red, dotted, mark=o, mark options={solid, red}]
%table[row sep=crcr]{%
%	0.25	1.15864613503627\\
%	0.125	1.11083318983806\\
%	0.0625	1.08634941740846\\
%	0.03125	1.07818354791066\\
%	0.015625	1.06134126507444\\
%	0.0078125	0.97611784999207\\
%};
%\addlegendentry{p (LOD)}

\addplot [color=color3, very thick, mark=square, mark options={solid, color3}, mark size=2.6]
table[row sep=crcr]{%
	0.25	0.152017499460782\\
	0.125	0.0453765569290689\\
	0.0625	0.0304909681165764\\
	0.03125	0.0395799260028487\\
	0.015625	0.0445645520705129\\
	0.0078125	0.0558723272387358\\	
};
%\addlegendentry{$p-\tilde p_H, m=3$}

%%%%%%%%%%%%%%%  m=4  %%%%%%%%%%%%%%%%%%%%%%%%%%%%%%%%%%%%% 

%\addplot [color=red, dotted, mark=o, mark options={solid, red}]
%table[row sep=crcr]{%
%	0.25	1.15864613503627\\
%	0.125	1.11078245067068\\
%	0.0625	1.08644210283665\\
%	0.03125	1.07830444315791\\
%	0.015625	1.0616681969916\\
%	0.0078125	0.9755831126505\\
%};
%\addlegendentry{p (LOD) 4}

%\addplot [color=color2, very thick, mark=*, mark options={solid, color2}, mark size=2.6]
%table[row sep=crcr]{%
%	0.25	0.152017499460782\\
%	0.125	0.039441095371904\\
%	0.0625	0.0146997596256021\\
%	0.03125	0.0149245142715879\\
%	0.015625	0.0181356799765558\\
%	0.0078125	0.0234091591003355\\
%};
%%\addlegendentry{$p-\tilde p_H, m=4$}

%%%%%%%%%%%%%%%  nodal LOD  %%%%%%%%%%%%%%%%%%%%%%%%%%%%%%%%%%%%% 

%\addplot [color=green, very thick, mark=square, mark options={solid, color2}, mark size=2.6]
%table[row sep=crcr]{%
%0.25	1.15770997069436\\
%0.125	1.10920148000998\\
%0.0625	1.08500671602216\\
%0.03125	1.0740147891555\\
%0.015625	1.04811246872712\\
%0.0078125	0.953633909347443\\
%};
%\addlegendentry{p (nodal)}

\end{axis}
\end{tikzpicture}%
\caption[~]{Convergence history in the $H^1$-norm for a random coefficient with~$\eps=2^{-9}$ and different values of~$m$. The solid line without marks shows an approximation with increasing~$m$ (up to $m=5$). The remaining entries consider a fixed~$m=1$ (\tikz{\node[mark size=2.5pt,line width=1.1pt] at (0,0) {\pgfuseplotmark{o}};}), $m=2$ (\tikz{\node[mark size=3.3pt,line width=1.1pt] at (0,0) {\pgfuseplotmark{triangle}};}), or~$m=3$ (\tikz{ \node[mark size=2.8pt,line width=1.1pt] at (0,0) {\pgfuseplotmark{square}};}). }
\label{fig:exp2:conv}
\end{figure}
%
%
%=============================================================================
\subsection{Refinement of boundary}\label{sect:numerics:exp3}
In this final experiment we demonstrate the possibility of the PDAE approach to combine different meshes on~$\Omega$ and~$\Gamma$. 
We consider once more the mixed boundary case with~$\Gamma_\text{dyn} \coloneqq (0,1) \times \{0\}$. As diffusion coefficient we choose~$a_\eps^\text{sm}$ with a moderate~$\eps=1/4$. The initial data reads~$u_0(x,y) = \sin(3\pi x) \cdot \cos(\frac 52 \pi y + 1)$.
% spaces, meshes
For this particular example we do not consider LOD spaces but standard~$\calP_1(\calTG)$ elements on the boundary. 
For the bulk we fix a uniform mesh~$\calTO$ with mesh size $H_\Omega$. 
Contrariwise, we apply uniform refinements on the boundary, i.e., we consider a mesh~$\calTG$ with mesh sizes~$H_\Gamma = H_\Omega, \frac 12 H_\Omega, \frac 14 H_\Omega, \dots$. 

% results 
The numerical results in Figure~\ref{fig:exp3:conv} indicate that this refinement has no positive effect on the approximation of~$u$. The boundary values~$p$, however, can be improved significantly. This does not only become evident for the $L^2$-norm but also in the $H^1$-norm. 
Recall that even the LOD approach considered in the previous two examples could only provide small $H^1$-errors if corrector functions were added. The here presented refinement of the boundary (without changing the interior mesh~$\calTO$) thus provides a tool to improve boundary approximations at low costs. 
\begin{figure}
% This file was created by matlab2tikz.
%
%The latest updates can be retrieved from
%  http://www.mathworks.com/matlabcentral/fileexchange/22022-matlab2tikz-matlab2tikz
%where you can also make suggestions and rate matlab2tikz.
%
\begin{tikzpicture}

\begin{axis}[%
width=2.35in,
height=2.1in,
at={(0.758in,0.53in)},
scale only axis,
xmode=log,
xmin=0.0007,
xmax=0.16,
xminorticks=true,
xlabel style={font=\color{white!15!black}},
xlabel={$H_\Gamma$},
ymode=log,
ymin=3e-05,
ymax=0.5,
yminorticks=true,
yticklabel pos=right,
ylabel style={font=\color{white!15!black}},
ylabel={$L^2$-error},
ylabel style={below=0.2in},
axis background/.style={fill=white},
]

\addplot [color=color0, mark=triangle, mark options={solid, color0}, mark size=2.6, very thick, dashed]
  table[row sep=crcr]{%
0.125	0.0346531094850169\\
0.0625	0.0337167265053244\\
0.03125	0.0335295966928827\\
0.015625	0.0334822866052367\\
0.0078125	0.0334704367975584\\
0.00390625	0.0334674733803953\\
0.001953125	0.0334667324727735\\
0.0009765625	0.0334665472426447\\
};
%\addlegendentry{u (FEM) $H_\Omega=2^{-3}$}

%\addplot [color=blue, dotted, mark=o, mark options={solid, blue}]
%  table[row sep=crcr]{%
%0.125	0.0340920280898602\\
%0.0625	0.0336179649122662\\
%0.03125	0.0335046003015528\\
%0.015625	0.0334760243208874\\
%0.0078125	0.0334688707378906\\
%0.00390625	0.0334670818404086\\
%0.001953125	0.0334666345862955\\
%0.0009765625	0.0334665227709337\\
%};
%\addlegendentry{u (LOD) $H_\Omega=2^{-3}$}

\addplot [color=color3, mark=triangle, mark size=2.6, very thick]
  table[row sep=crcr]{%
0.125	0.0194206784856389\\
0.0625	0.00737174586495099\\
0.03125	0.00422015008533206\\
0.015625	0.0033686214749949\\
0.0078125	0.00315330730004703\\
0.00390625	0.00309941238066237\\
0.001953125	0.00308593671255527\\
0.0009765625	0.00308256772195011\\
};
%\addlegendentry{p (FEM) $H_\Omega=2^{-3}$}

%\addplot [color=red, dotted, mark=o, mark options={solid, red}]
%  table[row sep=crcr]{%
%0.125	0.0143616449487595\\
%0.0625	0.00607135903917078\\
%0.03125	0.00384811352313223\\
%0.015625	0.00327213313270555\\
%0.0078125	0.0031289672602307\\
%0.00390625	0.00309331392449044\\
%0.001953125	0.00308441126221437\\
%0.0009765625	0.00308218630715757\\
%};
%\addlegendentry{p (LOD) $H_\Omega=2^{-3}$}

%%%%% starting with H_Omega = 6  %%%%%%

\addplot [color=color0, mark=*, mark options={solid, color0}, mark size=2.2, very thick, dashed]
table[row sep=crcr]{%
	0.015625	0.000627981191548936\\
	0.0078125	0.000609035974263876\\
	0.00390625	0.000605286303852835\\
	0.001953125	0.000604413510057278\\
	0.0009765625	0.000604199393932139\\
};
%\addlegendentry{u (FEM) $H_\Omega=2^{-6}$}

%\addplot [color=blue, dotted, mark=o, mark options={solid, blue}]
%table[row sep=crcr]{%
%	0.015625	0.000617128520904929\\
%	0.0078125	0.000606940577664087\\
%	0.00390625	0.000604803278268792\\
%	0.001953125	0.000604295337290727\\
%	0.0009765625	0.000604170012708356\\
%};
%\addlegendentry{u (LOD) $H_\Omega=2^{-6}$}

\addplot [color=color3, mark=*, mark size=2.2, very thick]
table[row sep=crcr]{%
	0.015625	0.000378300999635429\\
	0.0078125	0.000132452820389771\\
	0.00390625	7.12925269033217e-05\\
	0.001953125	5.62951019071951e-05\\
	0.0009765625	5.25949700917349e-05\\
};
%\addlegendentry{p (FEM) v}

%\addplot [color=red, dotted, mark=o, mark options={solid, red}]
%table[row sep=crcr]{%
%	0.015625	0.000280345150976202\\
%	0.0078125	0.000106706861863536\\
%	0.00390625	6.46133178037722e-05\\
%	0.001953125	5.46059421485598e-05\\
%	0.0009765625	5.21716007541912e-05\\
%};
%\addlegendentry{p (LOD) $H_\Omega=2^{-6}$}

\addplot [color=black, dashdotted]
table[row sep=crcr]{%
	0.125	0.0125\\
	0.0625	0.00625\\
	0.03125	0.003125\\
	0.015625	0.0015625\\
	0.0078125	0.00078125\\
	0.00390625	0.000390625\\
	0.001953125	0.0001953125\\
	0.0009765625	0.00009765625\\
};
%\addlegendentry{ord1}

\end{axis}
\end{tikzpicture}%\hspace{-0.4cm}
% This file was created by matlab2tikz.
%
%The latest updates can be retrieved from
%  http://www.mathworks.com/matlabcentral/fileexchange/22022-matlab2tikz-matlab2tikz
%where you can also make suggestions and rate matlab2tikz.
%
\begin{tikzpicture}

\begin{axis}[%
width=2.35in,
height=2.1in,
at={(0.758in,0.53in)},
scale only axis,
xmode=log,
xmin=0.0007,
xmax=0.16,
xminorticks=true,
xlabel style={font=\color{white!15!black}},
xlabel={$H_\Gamma$},
ymode=log,
ymin=3e-05,
ymax=0.5,
yminorticks=true,
ylabel style={font=\color{white!15!black}},
ylabel={$H^1$-error},
ylabel style={below=2.9in},
yticklabels = {},
axis background/.style={fill=white},
legend style={at={(0.95,0.37)}, legend cell align=left, align=left, draw=white!15!black}
]

%%%%% legend  %%%%%%

\addplot [color=color0, very thick, dashed]
table[row sep=crcr]{%
	0.125	0.174135094639363\\
};
\addlegendentry{$u-u_H$} % FEM only

\addplot [color=color3, very thick]
table[row sep=crcr]{%
	0.125	0.146558229742511\\
};
\addlegendentry{$p-p_H$} % FEM only

\addplot [color=black, dashdotted]
table[row sep=crcr]{%
	0.125	0.125\\
	0.0625	0.0625\\
	0.03125	0.03125\\
	0.015625	0.015625\\
	0.0078125	0.0078125\\
	0.00390625	0.00390625\\
	0.001953125	0.001953125\\
	0.0009765625	0.0009765625\\
};
\addlegendentry{order 1}

%%%%% starting with H_Omega = 3  %%%%%%

\addplot [color=color0, mark=triangle, mark options={solid, color0}, mark size=2.6, very thick, dashed]
  table[row sep=crcr]{%
0.125	0.174135094639363\\
0.0625	0.171698631201692\\
0.03125	0.171282285261744\\
0.015625	0.171183232888466\\
0.0078125	0.17115886046379\\
0.00390625	0.171152793931151\\
0.001953125	0.171151278993859\\
0.0009765625	0.171150900366076\\
% energy norm
%0.125	0.174135094639363\\
%0.0625	0.171698631201692\\
%0.03125	0.171282285261744\\
%0.015625	0.171183232888466\\
%0.0078125	0.17115886046379\\
%0.00390625	0.171152793931151\\
%0.001953125	0.171151278993859\\
%0.0009765625	0.171150900366076\\
};
%\addlegendentry{u (FEM) $H_\Omega=2^{-3}$}

\addplot [color=color3, mark=triangle, mark size=2.6, very thick]
  table[row sep=crcr]{%
0.125	0.231433675534673\\
0.0625	0.104660624220248\\
0.03125	0.0579022655083141\\
0.015625	0.0391820602733269\\
0.0078125	0.0329814770662258\\
0.00390625	0.0312484396240297\\
0.001953125	0.0308007734143397\\
0.0009765625	0.0306879414818688\\
% energy norm  	
%0.125	0.146558229742511\\
%0.0625	0.0682525748592768\\
%0.03125	0.0381798459100259\\
%0.015625	0.0257919508557837\\
%0.0078125	0.0216355773646846\\
%0.00390625	0.0204676261201277\\
%0.001953125	0.0201653763267404\\
%0.0009765625	0.0200891496479936\\
};
%\addlegendentry{p (FEM) $H_\Omega=2^{-3}$}

%%%%% starting with H_Omega = 6  %%%%%%

\addplot [color=color0, mark=*, mark options={solid, color0}, mark size=2.2, very thick, dashed]
table[row sep=crcr]{%
	0.015625	0.0185213575180686\\
	0.0078125	0.0185139188977422\\
	0.00390625	0.0185127240698551\\
	0.001953125	0.0185124681676631\\
	0.0009765625	0.0185124068885239\\
	% energy error
%	0.015625	0.0185213575180686\\
%	0.0078125	0.0185139188977422\\
%	0.00390625	0.0185127240698551\\
%	0.001953125	0.0185124681676631\\
%	0.0009765625	0.0185124068885239\\
};
%\addlegendentry{u (FEM) $H_\Omega=2^{-6}$}

\addplot [color=color3, mark=*, mark size=2.2, very thick]
table[row sep=crcr]{%
	0.015625	0.0214532939711467\\
	0.0078125	0.0106248436111328\\
	0.00390625	0.00519715321662344\\
	0.001953125	0.00237020398083026\\
	0.0009765625	0.000527059447663544\\
	% energy error
%	0.015625	0.0148788821929134\\
%	0.0078125	0.00738633128095764\\
%	0.00390625	0.00361386299116488\\
%	0.001953125	0.00164442806200975\\
%	0.0009765625	0.000342684080316456\\
};
%\addlegendentry{p (FEM) $H_\Omega=2^{-6}$}

\end{axis}
\end{tikzpicture}%
\caption[~]{Convergence history in the $L^2$ (left) and $H^1$-norm (right) for $u$ and $p$, respectively. The different plots start with a mesh size~$H_\Omega=2^{-3}$ (\tikz{\node[mark size=3.5pt,line width=1.1pt] at (0,0) {\pgfuseplotmark{triangle}};}) and~$H_\Omega=2^{-6}$ (\tikz{ \node[mark size=2.5pt] at (0,0) {\pgfuseplotmark{*}};}). 	
The reference solution is computed on a uniform mesh with~$H_\Omega=H_\Gamma=2^{-10}$.}
\label{fig:exp3:conv}
\end{figure}
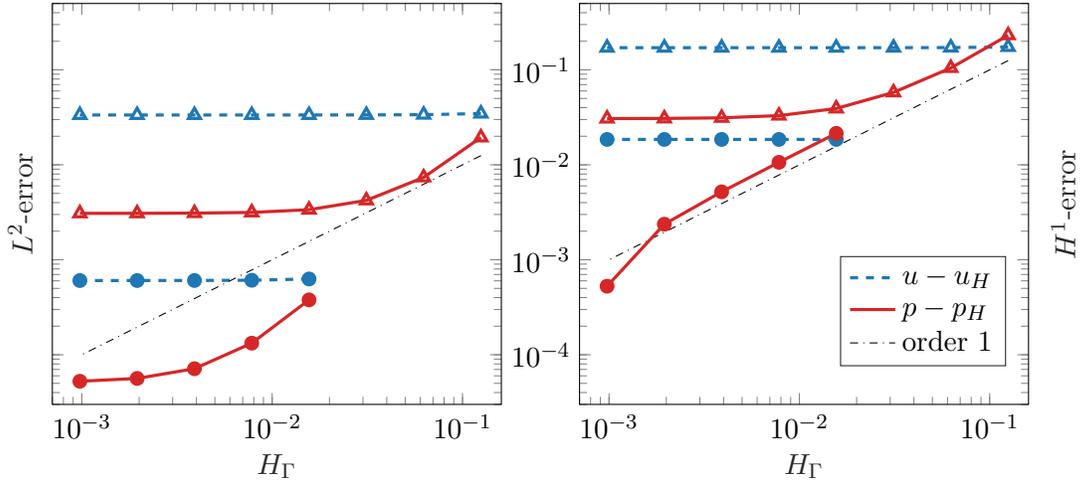
%
%
%=============================================================================
%=========  Conclusion
%=============================================================================
\section{Conclusion}\label{sect:conclusion}
In this paper, we have discussed the possibility of combining different approximation schemes in the bulk and on the boundary for parabolic problems with dynamic boundary conditions. 
In this way, we could consider multiscale techniques such as the LOD in combination with standard finite element spaces in the interior. 
We have shown analytically and observed numerically that this strategy allows remarkable speed-ups if low-regularity solutions are expected. This is the case for heterogeneous media as considered in this paper but may also be caused by nonlinearities. For this, the here presented schemes need to be combined with appropriate time discretization schemes, cf.~\cite{AltZ20}. 
The proposed decoupling approach may also be beneficial for the construction of splitting methods if bulk and surface dynamics have different time scales. 
%
%
%=============================================================================
%=========  Bibliogrphy
%=============================================================================
\bibliographystyle{alpha} % myalpha % abbrv
\bibliography{bib_augsburg}
%
%
%=============================================================================
%=========  Appendix
%=============================================================================
\appendix
\section{Mosco convergence of the energy functional for \eqref{eq:coupledSys}}\label{app:Mosco}
In this appendix, we close the remaining gap in the proof of Theorem \ref{thm:limit}, namely the Mosco convergence of the energy functional associated with the elliptic part of \eqref{eq:coupledSys}.

We define the energy functional $\calE_\delta\colon H^1(\Omega)\times H^1(\Omega_\delta)\to \R$ via
\[\calE_\delta(u,w) =\int_{\Omega}\frac12 |\nabla u|^2\dx + \int_{\Omega_\delta}\frac{1}{2\delta} a_\eps(x)|\nabla w|^2\dx.
\]
We now transform the variable domain $\Omega_\delta$ to a fixed domain.
Due to the assumed smoothness of $\Gamma$ and for sufficiently small $\delta$, every point $x\in \Omega_\delta$ can uniquely be written in the form $x=X_\delta(y,\theta)\coloneqq y+\delta \theta \nu(y)$ for $y\in \Gamma$ and $\theta\in (0,1)$, where $\nu$ denotes the outer unit normal of $\Omega$.

With this change of coordinates we define $\Sigma\coloneqq \Gamma\times(0,1)$ and for a function $w\colon \Omega_\delta \to \R$ we set $W=w\circ X_\delta\colon \Sigma\to\R$. Because of the smoothness of $\Gamma$ we have $W\in H^1(\Sigma)$ whenever $w\in H^1(\Omega_\delta)$ and the gradients can be computed by the chain rule.
By slight abuse of notation, we denote $a_\eps\circ X_\delta$ again by $a_\eps$ and note that it only depends on $y$, not on $\theta$.
Moreover, we introduce the space $\calW\coloneqq\{(u,W)\in H^1(\Omega)\times H^1(\Sigma)\,|\,u|_\Gamma=W|_{\{\theta=0\}}\}$.
Inserting these transformations into $\calE_\delta$, we arrive at the energy functional $E_\delta\colon \calW\to R$ defined by 
\begin{equation}\label{eq:defenergy}
  E_\delta (u,W)
  = \int_{\Omega}\frac12\, |\nabla u|^2\dx 
  + \int_{\Sigma}\frac{1}{2\delta}\, a_\eps(y) \Big(\nabla_\Gamma  W\cdot \mathbf{B}_\delta (y,\theta)\nabla_\Gamma W+\frac{1}{\delta^2}|\partial_\theta W|^2 \Big) \mathbf{J}_\delta(y,\theta)\, \text{d}(y,\theta).
\end{equation}
Here, $\mathbf{B}_\delta$ describes the transformation of the (tangential part of) the gradient and $\mathbf{J}_\delta$ the volume change.
Lemma 2.2 in \cite{Lie13} states that $\mathbf{B}_\delta \to \operatorname{Id}$ and $\mathbf{J}_\delta/\delta\to 1$, both uniformly in $\delta$.

The main goal of this appendix is to show the following Proposition concerning the Mosco convergence of $E_\delta$, which is the analogue to \cite[Thm.~3.2]{Lie13}.
\begin{proposition}
The energy functional $E_\delta$ converges in the sense of Mosco to the limit functional $E_0\colon\calW\to \R$ given by
\begin{equation*}
E_0(u,W)\coloneqq
\begin{cases}
\int_{\Omega}\frac12 |\nabla u|^2\dx + \int_{\Sigma}\frac{1}{2} a_\eps(y)|\nabla_\Gamma  W|^2\, \text{d}(y,\theta) \qquad \text{if}\quad (u,W)\in \calW_0,\\
+\infty \qquad \text{else},
\end{cases}
\end{equation*}
where $\calW_0\coloneqq\{(u,W)\in \calW\, |\, \partial_\theta W=0\}$.
\end{proposition}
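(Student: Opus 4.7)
Mosco convergence requires two ingredients that I would verify separately: (i) a $\liminf$ inequality, i.e., $E_0(u,W)\le \liminf_{\delta\to 0}E_\delta(u_\delta,W_\delta)$ whenever $(u_\delta,W_\delta)\rightharpoonup(u,W)$ weakly in $\calW$; and (ii) the existence of a recovery sequence $(u_\delta,W_\delta)\to(u,W)$ strongly in $\calW$ with $\limsup_{\delta\to 0}E_\delta(u_\delta,W_\delta)\le E_0(u,W)$. The structure closely follows~\cite{Lie13}; the only genuine novelty is the bounded, $\theta$-independent coefficient $a_\eps$, which enters as a weight uniformly bounded from above and below.

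For (i), I may assume $\liminf E_\delta(u_\delta,W_\delta)<\infty$ and pass to a subsequence on which the energies are uniformly bounded. Since $\mathbf{J}_\delta/\delta\to 1$ uniformly and $a_\eps\ge\alpha>0$, the $\theta$-derivative contribution in \eqref{eq:defenergy} satisfies $\frac{\alpha}{4\delta^2}\int_\Sigma|\partial_\theta W_\delta|^2\,\text{d}(y,\theta)\lesssim E_\delta(u_\delta,W_\delta)$ for small $\delta$, which forces $\partial_\theta W_\delta\to 0$ strongly in $L^2(\Sigma)$. Combined with $\partial_\theta W_\delta\rightharpoonup\partial_\theta W$, this yields $\partial_\theta W=0$, hence $(u,W)\in\calW_0$. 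For the tangential part I would write the surface integrand as
\[
  \tfrac{1}{2}a_\eps(y)\,\nabla_\Gamma W_\delta\cdot \mathbf{M}_\delta\,\nabla_\Gamma W_\delta,
  \qquad \mathbf{M}_\delta\coloneqq \mathbf{B}_\delta(y,\theta)\,\mathbf{J}_\delta(y,\theta)/\delta,
\]
with $\mathbf{M}_\delta\to\operatorname{Id}$ uniformly on $\Sigma$. Since $\|\nabla_\Gamma W_\delta\|_{L^2(\Sigma)}$ is bounded, the difference $\int_\Sigma \tfrac12 a_\eps\,\nabla_\Gamma W_\delta\cdot(\mathbf{M}_\delta-\operatorname{Id})\nabla_\Gamma W_\delta\,\text{d}(y,\theta)\to 0$. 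The remaining symmetric, $a_\eps$-weighted quadratic form in $\nabla_\Gamma W_\delta$ is weakly lower semicontinuous in $H^1(\Sigma)$ (it is continuous, convex, and coercive). Adding the standard weak lsc of $\int_\Omega\tfrac12|\nabla u|^2$ yields the liminf bound $E_0(u,W)\le\liminf_{\delta\to 0}E_\delta(u_\delta,W_\delta)$.

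For (ii), given $(u,W)\in\calW_0$, I propose the constant recovery sequence $(u_\delta,W_\delta)\equiv(u,W)$; it trivially converges strongly in $\calW$ and, since $\partial_\theta W=0$, the singular $\delta^{-3}$-term in \eqref{eq:defenergy} disappears. The remaining expression reduces to
\[
  E_\delta(u,W)=\int_\Omega\tfrac12|\nabla u|^2\dx+\int_\Sigma\tfrac12 a_\eps(y)\,\nabla_\Gamma W\cdot\mathbf{M}_\delta\,\nabla_\Gamma W\,\text{d}(y,\theta),
\]
and the uniform convergence $\mathbf{M}_\delta\to\operatorname{Id}$ together with dominated convergence (bounding the integrand by $C\,a_\eps\,|\nabla_\Gamma W|^2\in L^1(\Sigma)$) gives $E_\delta(u,W)\to E_0(u,W)$. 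If $(u,W)\notin\calW_0$, then $E_0(u,W)=+\infty$ and there is nothing to prove.

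The main technical obstacle is the interplay between the degenerating scaling and the $\delta$-dependent geometry matrix $\mathbf{M}_\delta$ in the liminf step: one has to separate the nondegenerate $L^2$-convergence of the transformation coefficients from the merely weak $H^1$-convergence of $W_\delta$, while still handling the heterogeneous weight $a_\eps$. The uniform bounds on $\mathbf{B}_\delta$, $\mathbf{J}_\delta/\delta$ from \cite[Lem.~2.2]{Lie13} and the positive uniform lower bound on $a_\eps$ are precisely what make the above splitting rigorous; with these in hand, the argument of~\cite[Thm.~3.2]{Lie13} carries over essentially verbatim.
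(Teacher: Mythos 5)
Your proof is correct and follows essentially the same route as the paper: the liminf inequality via the $\delta^{-2}$-scaling of the $\partial_\theta$-term (forcing $\partial_\theta W=0$) combined with uniform convergence of $\mathbf{B}_\delta$ and $\mathbf{J}_\delta/\delta$ and weak lower semicontinuity of the quadratic form, and the limsup inequality via the constant recovery sequence. The only minor slip is that you refer to the normal-derivative contribution as the "$\delta^{-3}$-term"; the formal prefactor is $\tfrac{1}{2\delta^3}$, but after accounting for $\mathbf{J}_\delta\sim\delta$ it effectively scales like $\delta^{-2}$ — this does not affect the argument since the term vanishes identically once $\partial_\theta W=0$.
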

Mosco convergence is Gamma convergence in the strong and weak topology simultaneously, see \cite[Sect.~3]{Lie13}
Hence, we have to show (i) a liminf-estimate for a weakly converging sequence and (ii) a limsup-estimate for a strongly converging recovery sequence.

\begin{proof}
\emph{(i):} Let $(u_\delta, W_\delta)\wconv (u,W)$ in $\calW$. Assuming that $\lim \inf_{\delta\to 0}E_\delta(u_\delta, W_\delta)<\infty$, we necessarily have $(u, W)\in\calW_0$ due to the weak lower semicontinuity of the norm on $\calW$.
It holds that
\[E_\delta(u_\delta, W_\delta)\geq  \int_{\Omega}\frac12 |\nabla u_\delta|^2\dx + \int_{\Sigma}\frac{1}{2} a_\eps(y)\nabla_\Gamma  W_\delta\cdot \mathbf{B}_\delta (y,\theta)\nabla_\Gamma W_\delta \,\frac{1}{\delta}\mathbf{J}_\delta(y,\theta)\, \text{d}(y,\theta).\]
The liminf-estimate now follows from the uniform convergence of $\mathbf{B}_\delta$ and $\mathbf{J}_\delta/\delta$.

\emph{(ii):} For $(u,W)$ we choose the constant recovery sequence $(u_\delta, W_\delta)=(u,W)$. 
In the case $(u,W)\not \in \calW_0$ the result is trivial, since $E_0(u, W)=\infty$ and we can argue as in (i).
In the case $(u,W)\in \calW_0$, the derivative $\partial_\theta$ in $E_\delta$ vanishes and we obtain
\[E_\delta(u,W)=\int_{\Omega}\frac12 |\nabla u|^2\dx + \int_{\Sigma}\frac{1}{2} a_\eps(y)\nabla_\Gamma  W\cdot \mathbf{B}_\delta (y,\theta)\nabla_\Gamma W\,\frac{1}{\delta}\mathbf{J}_\delta(y,\theta)\, \text{d}(y,\theta)\to E_0(u,W),\]
arguing as above.
\end{proof}

Note that $(u,W)\in \calW_0$ if and only if $(u,W)\in H^1(\Omega)\times H^1(\Gamma)$ with $u|_\Gamma=W$. Hence, the limit energy $E_0$ can be reduced by integrating over $\theta$, which exactly gives the energy functional associated with the elliptic part of \eqref{eq:dynBC}.
\end{document}